\documentclass[10pt,a4paper]{article}

\usepackage{geometry}
\usepackage[utf8]{inputenc}
\usepackage{amsthm, amssymb, graphicx}
\usepackage[numbers,square,sort&compress]{natbib}
\usepackage{listings}
\usepackage[ruled,vlined]{algorithm2e}
\usepackage{amsmath,tikz}
\usepackage{mathtools}
\usepackage{cases}
\usepackage{algorithmic}
\usepackage{array, makecell}
\usepackage{sidecap, caption}
\usepackage{xcolor, color}
\usepackage{cleveref}
\setcitestyle{numbers, comma,\square}
\usepackage{bbm}
\usepackage{enumitem}

\newtheorem{theorem}{Theorem}[section]
\newtheorem{corollary}{Corollary}[section]
\newtheorem{lemma}{Lemma}[section]
\newtheorem{example}{Example}[section]
\newtheorem{remark}{Remark}[section]
\newtheorem{assumption}{Assumption}[section]
\newtheorem{definition}{Definition}[section]
\DeclareMathOperator*{\argmin}{argmin}

\newcommand{\R}{\mathbb{R}}
\def\g{{\tilde{g}}}

\newcommand{\ignore}[1]{}
\def\T{{\cal T}}
\def\C{{\cal C}}

\def\M{{\cal{M}}}

\def\s{{\bf s}}
\def\x{{\bf x}}
\def\y{{\bf y}}
\def\z{{\bf z}}
\def\w{{\bf w}}

\def\u{{\bf u}}
\def\v{{\bf v}}
\def\t{{\bf t}}
\def\0{{\bf 0}}
\def\A{{\cal A}}
\def\B{{\cal B}}
\def\C{{\cal C}}
\def\c{{\mathbf{c}}}

\def\T{{\cal T}}
\def\M{{\cal M}}

\def\H{{\cal H}}

\title{A Homogeneous Tensor Framework for High-Order Trust-Region and Spherical Polynomial Optimization}

\author{Wenqi Zhu\thanks{Mathematical Institute, University of Oxford, Woodstock Road, Oxford, UK, OX2 6GG. {\tt wenqi.zhu@maths.ox.ac.uk}}
\and
Haibin Chen\thanks{School of Management Science, Qufu Normal University, Rizhao, Shandong, China, 276800. {\tt {chenhaibin508@qfnu.edu.cn}}}
\and
Guanglu Zhou\thanks{School of Electrical Engineering, Computing and Mathematical Sciences, Curtin University, Perth, WA, Australia. {\tt g.zhou@curtin.edu.au}}}

\date{\today}

\begin{document}

\maketitle
\begin{abstract}

High-order methods can improve worst-case evaluation complexity, but for orders $p\geq3$ their Taylor subproblems are nonconvex polynomial optimization problems and are generally difficult to solve. We develop a radius-controlled boundary approach based on homogeneous tensor representations. By augmenting the step with a constant coordinate, any $p$th-order Taylor polynomial can be represented exactly as an order-$p$ homogeneous tensor form; at a prescribed radius, the boundary model is a spherical polynomial optimization problem. The representation applies to arbitrary $p$, while the algorithmic development focuses on the cubic case $p=3$. For an inhomogeneous cubic on the sphere, we introduce a quadratic shift and prove, under an explicit shift bound, equivalence with a three-block multilinear formulation at global optimality. This motivates a proximal alternating minimization (PAM) method with closed-form block updates; its objective values decrease and every accumulation point is stationary. We embed the boundary-step mechanism in an Adaptive Homogeneous Tensor Method (Ada--HTM). Under explicit smoothness, safeguarded-decrease, weak-curvature nondegeneracy, and local-refinement conditions, Ada--HTM attains the adaptive-regularization-type (AR$p$-type) evaluation complexity $\mathcal{O}(\epsilon^{-(p+1)/p})$ for first-order stationarity. Numerically, PAM matches order-$2$ moment--sum-of-squares (SOS) certificates on the structured cubic instances for which certification is tractable, scales particularly well for low-rank tensors, and makes Ada--HTM competitive with trust-region and cubic-regularization methods, with its largest gains on ill-conditioned and badly-scaled problems.
\end{abstract}

\smallskip
\noindent{\textbf{Keywords.} high-order optimization, trust-region methods, spherical polynomial optimization, tensor methods, proximal alternating minimization, evaluation complexity}

\smallskip
\noindent{\textbf{MSC subject classifications.} 90C30, 90C26, 65K05, 15A69}

\section{Introduction}

High-order models offer a well-established route to improved worst-case complexity in smooth nonconvex optimization. If the $p$th derivative of $f:\mathbb R^n\to\mathbb R$ is Lipschitz continuous, adaptive regularization methods of order $p$ (AR$p$) require at most
\[
O\!\left(\epsilon^{-\frac{p+1}{p}}\right)
\]
evaluations of $f$ and its derivatives to compute a point satisfying $\|\nabla f\|\leq\epsilon$; this order is sharp for methods using derivatives through order $p$~\cite{birgin2017worst,cartis2020sharp,cartis2020concise,carmon2020lower,carmon2021lower}. Their computational bottleneck is the step calculation. At an iterate $\x_k$, the available derivatives define
\begin{equation}
T_{p}(\x_k, \s) := f(\x_k) + \sum_{j=1}^p \frac{1}{j!} \nabla_{\x}^j f(\x_k)[\s]^j,
\label{Taylor}
\end{equation}
where $\nabla_{\x}^j f(\x_k) \in \R^{n^j}$ denotes the $j$th derivative tensor of $f$ at $\x_k$ and $\nabla_{\x}^j f(\x_k)[\s]^j$ its $j$th-order action along $\s \in \R^n$. For $p\geq3$, this polynomial is generally nonconvex and unbounded below. Both the {trust-region (TR)} model
\begin{equation}
\min_{\|\s\|\leq\Delta_k}T_p(\x_k,\s)
\label{subprob TR}
\end{equation}
and the regularized model
\begin{equation}
\min_{\s\in\mathbb R^n}\left\{T_p(\x_k,\s)
+\frac{\sigma_k}{p+1}\|\s\|^{p+1}\right\},
\label{subprob}
\end{equation}
where $\Delta_k>0$ denotes the {TR} radius and $\sigma_k>0$ the regularization parameter, therefore lead to nonconvex polynomial subproblems. Cubic optimization on a sphere is already NP-hard in the worst case~\cite{nesterov2003random,luo2010semidefinite}; a practical high-order method must consequently distinguish structural reformulations, local inner-solver guarantees, and global optimality certificates.

This paper develops a homogeneous-tensor route to these subproblems. Rather than solve the interior problem~\eqref{subprob TR} directly, we use the Taylor model on the boundary of an adaptively selected radius. With $\tilde\s=(1,\s^\top)^\top$, there is a symmetric order-$p$ tensor $\A_k$ such that
\[
T_p(\x_k,\s)=\A_k[\tilde\s]^p.
\]
Thus, at a prescribed radius, the boundary step is a spherical polynomial problem. The identity is exact for every order $p$; the boundary restriction is an algorithmic design choice controlled by the radius schedule. The formulation is most useful when contractions with $\A_k$ are evaluated directly from derivatives, without assembling the augmented tensor, and it preserves sparse, low-rank, and other derivative structure.

The computational development concentrates on $p=3$. The resulting cubic spherical polynomial is inhomogeneous because it contains linear and quadratic terms. Existing multilinear equivalences for homogeneous tensor polynomials therefore do not apply directly~\cite{zhang2012cubic}. We subtract a quadratic shift, which is constant on the sphere, and prove that under an explicit shift bound the cubic problem and a three-block spherical multilinear problem have the same global value; every global multilinear optimizer recovers a global optimizer of the cubic problem. The multilinear structure yields a proximal alternating minimization (PAM) method with closed-form block updates. PAM is a practical nonconvex solver: its accumulation points are stationary, but the equivalence theorem does not turn PAM into a worst-case global solver.

We place this boundary solver inside the Adaptive Homogeneous Tensor Method (Ada--HTM). The analysis separates a far-field phase from near-stationary negative-, weak-, and positive-curvature regimes. In the far field, accepted steps satisfy an AR$p$-order decrease test. Near stationarity, curvature-dependent radii exploit a quadratic or cubic direction, or enter a locally convex refinement phase {costing $O(\log\log(1/\epsilon))$ further evaluations}. Under the stated smoothness and bounded-derivative assumptions, a weak-curvature nondegeneracy condition, and a local-refinement condition, the far-field cost dominates and gives $O(\epsilon^{-(p+1)/p})$ evaluations. This is a conditional AR$p$-type result, not an unconditional improvement for arbitrary high-order {TR} methods: a naive radius rule for a $p\geq3$ Taylor model may retain the $O(\epsilon^{-2})$ rate~\cite[Sec.~4.1.3]{cartis2022evaluation}. The outer analysis requires only comparison with an explicit gradient- or curvature-based reference step. Accordingly, PAM supplies a candidate and a deterministic safeguard supplies the decrease needed by the theory.

The contributions are as follows.
\begin{itemize}[topsep=2pt,itemsep=2pt,parsep=0pt,leftmargin=*]
\item \emph{Homogeneous representation.} We give an exact order-$p$ representation of an arbitrary $p$th-order Taylor polynomial and formulate radius-controlled boundary steps as spherical polynomial problems.

\item \emph{Cubic equivalence and solver.} For $p=3$, we establish global equivalence between the shifted cubic spherical problem and its three-block multilinear formulation. For PAM we prove descent, square-summability of the increments, and stationarity of accumulation points.

\item \emph{Conditional outer complexity.} We formulate Ada--HTM using a reference-safeguarded boundary step and, under explicit far-field and near-stationary assumptions, obtain the AR$p$ evaluation bound $O(\epsilon^{-(p+1)/p})$.

\item \emph{Two-level numerical evidence.} PAM matches order-$2$ moment--SOS lower bounds on all tested structured cubic instances for which certification is tractable and scales especially well for low-rank tensors. At the outer level, Ada--HTM is robust on Chebyshev--Rosenbrock through $n=10$ and shows its largest gains on ill-conditioned and badly-scaled Mor\'e--Garbow--Hillstrom (MGH) problems. {The two scales are complementary: PAM is a subproblem solver whose dimension is limited only by the contraction cost, whereas the outer robustness study is limited by the dimensions at which the benchmark's near-global outcome can be reliably decided.}
\end{itemize}

Our work connects high-order regularization~\cite{birgin2017worst,cartis2020concise,cartis2019universal}, structured polynomial-model solvers~\cite{cartis2023second,zhu2023cubic,zhu2022quartic,Nesterov2022quartic}, and spherical tensor optimization~\cite{absil2008optimization,chen2022efficient,chen2025tensor,kolda2014adaptive,zhang2012cubic}. Moment--SOS relaxations provide global lower bounds~\cite{lasserre2001global,ahmadi2023higher}, but their semidefinite representations grow rapidly; here they are used only as a posteriori certificates where tractable. {This paper provides the algorithmic and computational component of a broader program on tractable high-order Taylor subproblems; the SOS-based exactness and certification theory is developed separately~\cite{zhu2024global,zhu2026sufficiently,cai2026globally}.}

{
\paragraph{Related work: TR complexity.}
Classical TR methods based on quadratic models require
\(\mathcal{O}(\epsilon^{-2})\) evaluations for first-order stationarity under
standard radius updates \cite{conn2000trust,cartis2022evaluation}. TRACE
\cite{curtis2017trace} improves this to the \(\mathcal{O}(\epsilon^{-3/2})\)
rate of cubic regularization, which is optimal for second-order methods: its
contraction--expansion mechanism controls the TR multiplier \(\lambda_k\)
relative to the step norm, so that \(\lambda_k/\|\s_k\|\) acts as an implicit
cubic-regularization parameter. Its extension iTRACE~\cite{curtis2023itrace}
permits inexact subproblem solutions, preserving the optimal rate while
substantially reducing the cost in Hessian-vector products. For arbitrary
order, Cartis, Gould, and Toint~\cite{cartis2021inexactTR} analyze a TR
framework with high-order Taylor decrements and dynamically accurate
evaluations that targets strong \(q\)th-order approximate minimizers; its
analysis assumes steps attaining a fixed fraction of the globally optimal
Taylor decrement, an operation that remains computationally challenging
beyond \(p=2\). Ada--HTM addresses the complementary question: an
explicit-radius, boundary-constrained \(p\)th-order TR method that attains
the AR\(p\) first-order rate under the stated conditions, without a
global-decrement oracle, together with a concrete spherical polynomial
solver for the cubic subproblem.
}

\paragraph{Organization of the Paper}
\Cref{sec 2 ada htm} gives the homogeneous representation; \Cref{sec:CSPOP} develops the cubic formulation and the PAM solver; \Cref{sec: convergence} analyzes Ada--HTM; \Cref{sec numerical} reports numerical results; and \Cref{sec conclusion} concludes the paper.

\paragraph{Preliminaries}
Let $\mathbb{R}^n$ be the $n$-dimensional real Euclidean space. Generally, we use lowercase letters, boldface letters, and calligraphic uppercase letters to denote scalars, vectors, and tensors respectively (e.g., the scalar $a$, the decision variable $\x$, and the tensor $\A$).
The all-zero {vector} in $\mathbb{R}^n$ is denoted by $\0$. For any $\x,\y\in\mathbb{R}^n$, we use $\langle \x, \y\rangle$ to represent the standard inner product between $\x$ and $\y$, and $\|\x\|=\sqrt{\langle \x,\x\rangle}$ denotes the Euclidean norm. The superscript $^\top$ stands for the transpose of vectors and matrices.
{All third-order tensors in this paper are symmetric. For two symmetric tensors $\A, \B \in \mathbb{R}^{n\times n\times n}$, their inner product is defined as
$
\langle\A,~ \B\rangle=\sum_{i,j,l\in[n]}\A_{ijl}\B_{ijl},
$
where $[n]:=\{1,\dots,n\}$, and the induced Frobenius norm is $\|\A\|_F=\sqrt{\langle\A,\A\rangle}$.
Throughout the paper, $k$ indexes the outer iterations of \Cref{HTM algo} and $\kappa$ the inner iterations of the subproblem solver; $\epsilon\in(0,1)$ denotes the outer accuracy level.}

\section{Homogeneous Tensor Framework for High-Order Optimization}
\label{sec 2 ada htm}

In this section we present a homogeneous tensor formulation of high-order Taylor models and describe the resulting optimization framework.
This formulation reveals a direct connection between high-order trust-region methods and spherical polynomial optimization, and leads to a tensor subproblem that must be solved at each iteration.

\paragraph{Homogeneous Tensor Representation}

The $p$th-order Taylor model \eqref{Taylor} can be equivalently reformulated as a homogeneous tensor optimization problem on the sphere.
Let
\[
T_p(\x_k,\s) := \sum_{\nu\in\mathbb{Z}^n_+,|\nu|\leq p} f_{\nu}\s^{\nu}\in \mathbb{R}_{p}[\s],
\]
where $\s = [s_1, \dotsc, s_n]^\top \in \mathbb{R}^n$ and {$f_{\nu}$ denotes the coefficient corresponding to the monomial
$\s^{\nu}=s_1^{\nu_1}\cdots s_n^{\nu_n}$ with $|\nu|=\nu_1+\cdots+\nu_n$ (the multi-index is written $\nu$ to avoid a clash with the shift parameter $\alpha$ of \Cref{sec:CSPOP})}. Introduce the augmented vector
\[
\tilde{\s}=(1,\s^\top)^\top\in\mathbb{R}^{n+1}.
\]
Define the symmetric tensor $\A$ of order $p$ and dimension $n+1$ such that
\begin{equation}
\label{formulation for pth order}
\A_{\pi(i_1i_2\cdots i_{p})}
=
\frac{(p-|\nu|)!\,\nu_1!\cdots\nu_n!}{p!}f_{\nu},
\qquad
i_1,\dots,i_p\in\{0,1,\dots,n\},
\end{equation}
where $\pi(i_1\cdots i_p)$ denotes permutations of the indices.
Then the Taylor model admits the homogeneous tensor representation
\[
T_p(\x_k,\s)=\A[\tilde{\s}]^p.
\]
{
The quantity $\binom{n+p}{p}$ counts the independent symmetric coefficient
classes of $\A$, equivalently the monomials of degree at most $p$; it is not,
in general, the number of nonzero entries in the expanded tensor array. Dense
derivatives may therefore produce a dense expanded tensor. In implementation,
we avoid forming $\A$ and evaluate the required contractions directly from the
derivative tensors, exploiting sparsity or low rank whenever it is present.
The representation applies to arbitrary $p\ge3$, whereas the shifted
multilinear solver developed in the next section specializes to $p=3$.
}

\subsection{Adaptive Homogeneous Tensor Method}

The homogeneous tensor representation naturally leads to a high-order optimization framework that we call the {Ada-HTM}. At each iteration $k$, the algorithm constructs the tensor model
{
$
T_p(\x_k,\s)=\A[\tilde \s]^p
$
}
and computes a step by approximately solving the spherical tensor subproblem $
\min_{\|\s\|=\Delta_k} \A[\tilde{\s}]^p .
$
The candidate step is evaluated using a ratio test comparing the actual reduction in the objective function with the reduction predicted by the Taylor model.
Based on this ratio, the step is accepted or rejected, and the trust-region radius $\Delta_k$ is updated accordingly.
Depending on the magnitude of the gradient and the curvature of the Hessian, the algorithm operates in different regimes. When the gradient is sufficiently large, the tensor subproblem is solved to obtain a descent step. When the gradient becomes small, the algorithm transitions to refinement steps and/or exploits local curvature information.

{
The practical inner solver and the outer analysis are connected by a simple
reference safeguard. Let $\widehat\s_k$ be any candidate returned by PAM (or
another boundary solver), with $\|\widehat\s_k\|=\Delta_k$. In the far field
set $\s_k^{(g)}=-\Delta_k\mathbf g_k/\|\mathbf g_k\|$; in a near-stationary
nonconvex regime let $\s_k^{(h)}$ be the signed smallest-eigenvalue direction
defined in \Cref{def:reference-direction}. We use
\begin{equation}
\s_k\in\argmin_{\s\in\{\widehat\s_k,\s_k^{\rm ref}\}}
T_p(\x_k,\s),
\qquad
\s_k^{\rm ref}=\s_k^{(g)}\ \hbox{or}\ \s_k^{(h)}.
\label{eq:safeguarded-boundary-step}
\end{equation}
Thus PAM proposes the computationally effective step, while the explicit
reference direction guarantees the model comparison used in the analysis.
No global solution of the boundary problem is required.
}

\begin{algorithm}[!htbp]
\caption{Adaptive $p$th-order Taylor Homogeneous Tensor Method (Ada-HTM)}
\label{HTM algo}
\begin{algorithmic}[1]
\STATE \textbf{Input:} accuracy level $0<\epsilon\ll1$; constants $\eta\in(0,1)$, $\gamma_2>1>\gamma_1>0$, $a:=\frac{p+1}{2p}$, $b:=\frac{p+1}{4p}$; initial point $\x_0\in\R^n$; initial radius $\Delta_0=\|\nabla_{\x} f(\x_0)\|>0$; set $k=0$.
\WHILE{$\|\nabla_{\x} f(\x_k)\|\ge\epsilon$}
\IF{$\|\nabla_{\x} f(\x_k)\|\ge\epsilon^a$ \textbf{(far-field regime)}}
\STATE Form $T_p(\x_k,\s)=\A[\tilde{\s}]^p$ as in \eqref{formulation for pth order}, generate a boundary candidate $\widehat\s_k$ with $\|\widehat\s_k\|=\Delta_k$ {using an order-$p$ spherical polynomial solver (for $p=3$, \Cref{alg1})}, and set
\begin{equation}\label{HTM step}
{\s_k\in\argmin_{\s\in\{\widehat\s_k,\,\s_k^{(g)}\}}T_p(\x_k,\s)}
\end{equation}
{with the safeguard \eqref{eq:safeguarded-boundary-step}}.
\STATE Compute $T_p(\x_k,\s_k)$, the ratio
\begin{equation}\tag{Ratio Test}\label{ratio test}
\rho_k:=\frac{f(\x_k)-f(\x_k+\s_k)}{f(\x_k)-T_p(\x_k,\s_k)},
\end{equation}
and check the model decrease condition
\begin{equation}\tag{Model Decrease Test}\label{Model Decrease test}
f(\x_k)-T_p(\x_k,\s_k)\ge\frac{\Delta_k\epsilon^a}{p+1}>0.
\end{equation}
\IF{$\rho_k>\eta$ \textbf{and} \eqref{Model Decrease test} holds}
\STATE Successful iteration: $\x_{k+1}:=\x_k+\s_k$, \quad $\Delta_{k+1}:=\gamma_2\Delta_k$.
\ELSE
\STATE Unsuccessful iteration: $\x_{k+1}:=\x_k$, \quad $\Delta_{k+1}:=\gamma_1\Delta_k$.
\ENDIF
\ELSIF{$\lambda_{\min}(\mathbf{H}_k)\le\epsilon^b$}
\STATE \textbf{Near-stationary nonconvex regime:} {set $\Delta_k=C_{\mathrm{ns}}\,\epsilon^{b}$ as in \eqref{eq:near-radius}; generate $\widehat\s_k$ with the same boundary solver and safeguard it against $\s_k^{(h)}$ using \eqref{eq:safeguarded-boundary-step}}; set $\x_{k+1}:=\x_k+\s_k$.
\ELSE
\STATE \textbf{Near-stationary convex regime:} compute an approximate minimizer $\s_k$ of $T_p(\x_k,\s)$ over the local convex region $\Omega_k$ of \Cref{sec Refinement} {and apply the local Newton refinement of \Cref{cor:newton-refine}}; set $\x_{k+1}:=\x_k+\s_k$.
\ENDIF
\STATE $k:=k+1$.
\ENDWHILE
\end{algorithmic}
\end{algorithm}

\paragraph{Link and comparison to AR$p$ and trust-region methods}
\Cref{HTM algo} mirrors the adaptive mechanism of AR$p$: increasing the radius $\Delta_k$ after a successful iteration ($\rho_k>\eta$) corresponds to decreasing the regularization and allowing a longer step, while decreasing $\Delta_k$ after an unsuccessful iteration corresponds to increasing the regularization and shortening the step. We prove in \Cref{sec: convergence} that the mechanism guarantees a uniform lower bound $\Delta_k\ge\Delta_{\min}>0$ in the far-field regime.
Compared with the AR$p$ algorithm, \Cref{HTM algo} requires one additional acceptance condition, the \eqref{Model Decrease test}. In AR$p$, the step minimizes the regularized model over $\R^n$,
$
\Check{\s}_k = \argmin_{\s \in \R^n} m_p(\x_k, \s),
$
so that
 {
 $$
   f(\x_k) - T_p(\x_k, \Check{\s}_k) = \underbrace{\big[f(\x_k) - m_p(\x_k, \Check{\s}_k) \big]}_{\ge 0} + \frac{\sigma_k}{p+1} \|\Check{\s}_k\|^{p+1} \ge \frac{\sigma_{\min}}{p+1} \|\Check{\s}_k\|^{p+1} \ge 0,
$$
}
and a model decrease of the required size is automatic. In \Cref{HTM algo}, the minimization is instead performed on the Taylor model itself under the boundary constraint $\|\s\| = \Delta_k$; {the model-decrease test and the reference safeguard \eqref{eq:safeguarded-boundary-step} are therefore needed to ensure that $f(\x_k) - T_p(\x_k, \s_k) > 0$ is sufficiently large.}

The efficiency of the proposed framework depends critically on solving the tensor subproblem efficiently.
In the next section we study the cubic case ($p=3$), where the subproblem reduces to a cubic spherical polynomial optimization problem and can be solved using a specialized algorithm.
The convergence and evaluation complexity of the overall framework are analyzed in \Cref{sec: convergence}.

\section{Optimization of the Cubic Polynomial on Spherical Constraints}
\label{sec:CSPOP}

The cubic case $p=3$ plays a central role in the proposed framework: the tensor
subproblem appearing in Ada--HTM reduces exactly to a cubic spherical polynomial
optimization problem,
\begin{equation}\label{e1 cspop}
\begin{aligned}
\min_{\mathbf{x} \in \mathbb{R}^n}
&\hspace{0.3cm}
T_3(\x)= \textbf{T} {\bf x}^3 + \mathbf{x}^\top \textbf{H} \mathbf{x} + \textbf{g}^{\top}\mathbf{x}\\
\textup{s.t.}~
&\hspace{0.3cm}\|\mathbf{x}\|=1,
\end{aligned}
\tag{CSPOP}
\end{equation}
where $\textbf{T} \in \mathbb{R}^{n\times n\times n}$ is a third-order symmetric tensor, $\textbf{H}\in\mathbb{R}^{n\times n}$ is a symmetric matrix, and $\textbf g\in\mathbb{R}^n$.
Since the analysis in this section applies to general cubic polynomials,
we drop the iteration index $k$ for notational simplicity.
Throughout this section, $\x$ denotes a generic variable on the sphere,
and should not be confused with the iterate $\x_k$ used in the main algorithm.
Problem \eqref{e1 cspop} can be viewed as the third-order trust-region subproblem restricted to the boundary of the unit ball. To facilitate algorithmic design, we further introduce a multilinear reformulation of the homogeneous polynomial.
Using \eqref{formulation for pth order}, problem \eqref{e1 cspop} can be written as
\begin{equation}\label{e2 H cspop}
\begin{aligned}
\min_{\mathbf{x} \in \mathbb{R}^n}
&\hspace{0.3cm}
T_3(\x)=\langle\T,\tilde{\x}\circ\tilde{\x}\circ\tilde{\x}\rangle\\
\textup{s.t.}~
&\hspace{0.3cm}\|\mathbf{x}\|=1,
\end{aligned}
\tag{H-CSPOP}
\end{equation}
where $\tilde{\x}=(1,\x^\top)^\top\in\mathbb{R}^{n+1}$, {and $\T$ is the coefficient tensor of $T_3(\x)$}. The detailed homogeneous formulation with explicit coefficients is given in \Cref{sec example}.
A natural relaxation is the cubic spherical multilinear polynomial optimization problem
\begin{equation}\label{e3 CSMOP}
\begin{aligned}
\min_{\x,\y,\z \in \mathbb{R}^n}
&\hspace{0.3cm}
F(\x,\y,\z)=\langle\T,\tilde{\x}\circ\tilde{\y}\circ\tilde{\z}\rangle\\
\textup{s.t.}~~~
&\hspace{0.3cm}\|\mathbf{x}\|=1,\ \|\mathbf{y}\|=1,\ \|\mathbf{z}\|=1,
\end{aligned}
\tag{CSMOP}
\end{equation}
where $\tilde{\y}=(1,\y^\top)^\top$ and $\tilde{\z}=(1,\z^\top)^\top$.  For homogeneous polynomials, Zhang, Ye, and Qi~\cite{zhang2012cubic} established an equivalence between formulations such as \eqref{e2 H cspop} and \eqref{e3 CSMOP}.
However, since $T_3$ is generally non-homogeneous, the associated Lagrange multipliers need not coincide and their approach cannot be directly applied. To overcome this difficulty we introduce a quadratic shift
{
\[
\tilde T_3(\x)=T_3(\x)-\alpha\|\x\|^2,
\]
}
where $\alpha>0$.
Problems \eqref{e1 cspop} and
\begin{equation}\label{e4 CSPOP alpha}
\begin{aligned}
\min_{\x \in \mathbb{R}^n}
&\hspace{0.5cm}
\tilde{T_3}(\x)=T_3(\x)-\alpha\|\x\|^{2}\\
\textup{s.t.}~
&\hspace{0.5cm}\|\x\|=1
\end{aligned}
\tag{CSPOP$_\alpha$}
\end{equation}
share the same minimizers on the sphere.
For sufficiently large $\alpha$, this shift introduces negative curvature in restricted directions while preserving optimal solutions (see {\Cref{lema1},\Cref{thm1}}). The key advantage of this transformation is that it converts the optimization problem \eqref{e1 cspop} into the problem of minimizing the quadratic shifted polynomial ${\tilde T_3(\x)}$. We prove that for sufficiently large $\alpha>0$, the minimizer of \eqref{e1 cspop} can be obtained by minimizing the corresponding multilinear polynomial optimization problem associated with {$\tilde T_3(\x)$} (\Cref{corol1} and \Cref{them2}). Using tensor augmentation, define a symmetric tensor $\bar{\T}\in\mathbb{R}^{(n+1)\times(n+1)\times(n+1)}$ whose nonzero entries are
\[
\bar{\T}_{ii0}=\bar{\T}_{i0i}=\bar{\T}_{0ii}=\frac13,
\qquad i\in[n],
\]
and zero otherwise. Then
$
\alpha\|\x\|^2
=
\alpha\langle\bar{\T},\tilde{\x}\circ\tilde{\x}\circ\tilde{\x}\rangle .
$
Hence \eqref{e4 CSPOP alpha} admits the homogeneous formulation
\begin{equation}\label{e5 H CSPOP alpha}
\begin{aligned}
\min_{\mathbf{x} \in \mathbb{R}^n}
&\hspace{0.3cm}
\langle\T,\tilde{\x}\circ\tilde{\x}\circ\tilde{\x}\rangle
-
\alpha\langle\bar{\T},\tilde{\x}\circ\tilde{\x}\circ\tilde{\x}\rangle\\
\textup{s.t.}~
&\hspace{0.3cm}\|\mathbf{x}\|=1.
\end{aligned}
\tag{H-CSPOP$_\alpha$}
\end{equation}
The corresponding multilinear formulation is
\begin{equation}\label{e6 H CSMOP alpha}
\begin{aligned}
\min_{\x,\y,\z \in \mathbb{R}^n}
&\hspace{0.3cm}
\tilde F(\x,\y,\z)
=
\langle\T,\tilde{\x}\circ\tilde{\y}\circ\tilde{\z}\rangle
-
\alpha\langle\bar{\T},\tilde{\x}\circ\tilde{\y}\circ\tilde{\z}\rangle\\
\textup{s.t.}~~~
&\hspace{0.3cm}
\|\mathbf{x}\|=1,\
\|\mathbf{y}\|=1,\
\|\mathbf{z}\|=1.
\end{aligned}
\tag{H-CSMOP$_\alpha$}
\end{equation}
Exploiting the resulting multi-block structure, in \Cref{sec pam algo} we develop a proximal alternating minimization (PAM) algorithm for solving \eqref{e1 cspop}.

Denote $D=\{\x\in\mathbb{R}^n \mid \|\x\|=1\}$.
We first analyze how the magnitude of the quadratic shift influences the second-order curvature of the shifted polynomial along the tangent space of $D$, as characterized in Lemma~\ref{lema1}. {With a slight abuse of notation, we write $\tilde{T_3}(\tilde{\w}):=\langle\T-\alpha\bar{\T},\tilde{\w}\circ\tilde{\w}\circ\tilde{\w}\rangle$ for the homogeneous form of the shifted polynomial evaluated at $\tilde{\w}\in\mathbb{R}^{n+1}$, so that $\tilde{T_3}(\x)=\tilde{T_3}(\tilde{\x})$ whenever $\tilde{\x}=(1,\x^\top)^\top$.}

\begin{lemma}\label{lema1}
Let $\alpha\geq 3\sqrt{2}\|\T\|_F$. For any $\tilde{\w}=(1,\w^\top)^\top\in\mathbb{R}^{n+1}$ with $\|\w\|=1$,
\begin{eqnarray}
\label{suff cond}
\tilde{\u}^\top\nabla^2\tilde{T_3}(\tilde{\w})\tilde{\u}\leq0,~\forall~ \tilde{\u}=(0,\u^\top)^\top\in\mathbb{R}^{n+1}.
\end{eqnarray}
\end{lemma}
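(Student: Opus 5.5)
The plan is to compute the Hessian of the homogeneous cubic form explicitly. Then I split the quadratic form $\tilde{\u}^\top\nabla^2\tilde{T_3}(\tilde{\w})\tilde{\u}$ into a contribution from $\T$ and one from the shift $\alpha\bar{\T}$. The first is bounded above by Cauchy--Schwarz, and the second is computed exactly.

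First, for a symmetric third-order tensor $\mathcal{S}\in\mathbb{R}^{(n+1)\times(n+1)\times(n+1)}$ and $G(\tilde{\w})=\langle\mathcal{S},\tilde{\w}\circ\tilde{\w}\circ\tilde{\w}\rangle$, symmetry gives $\nabla^2 G(\tilde{\w})=6\,\mathcal{S}[\tilde{\w}]$. Here $\mathcal{S}[\tilde{\w}]$ is the matrix with entries $\sum_i\mathcal{S}_{ijl}\tilde{w}_i$. Applying this with $\mathcal{S}=\T-\alpha\bar{\T}$ gives
\[
\tilde{\u}^\top\nabla^2\tilde{T_3}(\tilde{\w})\tilde{\u}
=6\,\T[\tilde{\w},\tilde{\u},\tilde{\u}]-6\alpha\,\bar{\T}[\tilde{\w},\tilde{\u},\tilde{\u}],
\]
where $\mathcal{S}[\tilde{\w},\tilde{\u},\tilde{\u}]=\sum_{i,j,l}\mathcal{S}_{ijl}\tilde{w}_i\tilde{u}_j\tilde{u}_l$.

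Second, I evaluate the shift term exactly using the structure of $\bar{\T}$ and the fact that $\tilde{u}_0=0$. Every nonzero entry of $\bar{\T}$ has exactly one index equal to $0$ and the other two equal. Any term with $j=0$ or $l=0$ vanishes because $\tilde{u}_0=0$. Hence only the entries $\bar{\T}_{0ii}=\tfrac13$ survive, paired with $\tilde{w}_0=1$, and
\[
\bar{\T}[\tilde{\w},\tilde{\u},\tilde{\u}]=\sum_{i=1}^n\tfrac13 u_i^2=\tfrac13\|\u\|^2 .
\]
So the shift contributes exactly $-2\alpha\|\u\|^2$, independently of $\w$.

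Third, I bound the $\T$ term. By Cauchy--Schwarz in the Frobenius inner product,
\[
|\T[\tilde{\w},\tilde{\u},\tilde{\u}]|=|\langle\T,\tilde{\w}\circ\tilde{\u}\circ\tilde{\u}\rangle|\le\|\T\|_F\,\|\tilde{\w}\|\,\|\tilde{\u}\|^2 .
\]
Since $\|\w\|=1$, we have $\|\tilde{\w}\|=\sqrt{1+\|\w\|^2}=\sqrt2$, and $\|\tilde{\u}\|=\|\u\|$. Combining the three steps,
\[
\tilde{\u}^\top\nabla^2\tilde{T_3}(\tilde{\w})\tilde{\u}\le\bigl(6\sqrt2\,\|\T\|_F-2\alpha\bigr)\|\u\|^2\le 0
\]
whenever $\alpha\ge3\sqrt2\|\T\|_F$, which is \eqref{suff cond}.

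I expect no serious obstacle. The only points needing care are the bookkeeping of the factor $6$ in the Hessian of a cubic form and the exact count of which permuted entries of $\bar{\T}$ survive the contraction. These together produce the constant $3\sqrt2$, so I would double-check them against the identity $\alpha\|\x\|^2=\alpha\langle\bar{\T},\tilde{\x}^{\circ3}\rangle$: the three permutations each contribute $\tfrac13$, which sums to $\|\x\|^2$ and confirms the normalization.
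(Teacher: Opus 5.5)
Your proof is correct and follows essentially the same route as the paper. Both write $\nabla^2\tilde{T_3}(\tilde{\w})=6\T\tilde{\w}-6\alpha\bar{\T}\tilde{\w}$, evaluate the shift contribution exactly as $-2\alpha\|\u\|^2$, and bound $6\langle\T,\tilde{\w}\circ\tilde{\u}\circ\tilde{\u}\rangle$ by $6\sqrt{2}\|\T\|_F\|\u\|^2$; you additionally spell out two steps the paper leaves implicit, namely which entries of $\bar{\T}$ survive because $\tilde u_0=0$, and $\|\tilde{\w}\|=\sqrt{2}$ in the Cauchy--Schwarz estimate.
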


\proof By the symmetry of $\T$ and $\bar{\T}$, for any $\tilde{\w}=(w_0,\ldots,w_n)^\top\in\mathbb{R}^{n+1}$,
$$
\nabla \tilde{T_3}(\tilde{\w})=3\T\tilde{\w}\tilde{\w}-3\alpha\bar{\T}\tilde{\w}\tilde{\w},
\qquad
\nabla^2 \tilde{T_3}(\tilde{\w})=6\T\tilde{\w}-6\alpha\bar{\T}\tilde{\w},
$$
where $\T\tilde{\w}\tilde{\w}\in\mathbb{R}^{n+1}$ and $\T\tilde{\w}\in\mathbb{R}^{(n+1)\times(n+1)}$ have entries
$(\T\tilde{\w}\tilde{\w})_i=\sum_{j,l=0}^n\T_{ijl}w_jw_l$ and $(\T\tilde{\w})_{ij}=\sum_{l=0}^n\T_{ijl}w_l$.
For $\tilde{\w}=(1,\w^\top)^\top\in\mathbb{R}^{n+1}, \|\w\|=1$, and any $\tilde{\u}=(0,\u^\top)^\top\in\mathbb{R}^{n+1}$, it holds that
$$
\begin{aligned}
\tilde{\u}^\top\nabla^2 \tilde{T_3}(\tilde{\w})\tilde{\u}&=6\langle\T,\tilde{\w}\circ\tilde{\u}\circ\tilde{\u}\rangle-6\alpha\langle\bar{\T},\tilde{\w}\circ\tilde{\u}\circ\tilde{\u}\rangle\\
&=6\langle\T,\tilde{\w}\circ\tilde{\u}\circ\tilde{\u}\rangle-2\alpha\|\u\|^2\leq(6\sqrt{2}\|\T\|_F-2\alpha)\|\u\|^2\leq 0,
\end{aligned}
$$
since $\alpha\geq 3\sqrt{2}\|\T\|_F$, and the desired result holds.
\qed

\begin{remark}

The sufficient condition \eqref{suff cond} in Lemma \ref{lema1} is weaker than the usual requirement that the objective function be concave when studying optimization problems for quartic homogeneous polynomials on the sphere \cite{chen2022efficient,chen2025tensor}. Note that concavity would require
$
\z^\top \nabla^2 \tilde{T_3}(\tilde \w) \z \le 0, \forall~ \z\in\mathbb R^{n+1},
$
that is, the Hessian must be negative semidefinite in all directions. However, in Lemma \ref{lema1}, we only require
\[
\tilde \u^\top \nabla^2 \tilde{T_3}(\tilde \w)\tilde \u\le0,~~ \forall~ \tilde \u=(0,\u^\top)^\top.
\]
Hence Lemma \ref{lema1} imposes negative curvature only along directions affecting $\w$, while allowing arbitrary curvature in directions involving the first coordinate.
Therefore the condition is strictly weaker than global concavity of $\tilde{T_3}$.
\end{remark}

We now prove the equivalence between \eqref{e5 H CSPOP alpha} and \eqref{e6 H CSMOP alpha} in the sense that they share the same optimal value.

\begin{theorem}\label{thm1}
Let $\alpha\geq 3\sqrt{2}\|\T\|_F$. Then the optimal values of \eqref{e5 H CSPOP alpha} and \eqref{e6 H CSMOP alpha} are equal.
\end{theorem}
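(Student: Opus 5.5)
The proof shows two inequalities. One is immediate and one is substantive. Write $\mathcal{S}:=\T-\alpha\bar{\T}$, a symmetric tensor, so that $\tilde F(\x,\y,\z)=\langle\mathcal{S},\tilde\x\circ\tilde\y\circ\tilde\z\rangle$. Let $v_P$ denote the optimal value of \eqref{e5 H CSPOP alpha} and $v_M$ that of \eqref{e6 H CSMOP alpha}.

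\emph{Step 1 (easy direction).} Every feasible $\x$ of \eqref{e5 H CSPOP alpha} gives the feasible triple $(\x,\x,\x)$ of \eqref{e6 H CSMOP alpha} with the same objective value. Hence $v_M\le v_P$.

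\emph{Step 2 (reverse direction).} Both feasible sets are compact, so the minima are attained. Take a global minimizer $(\x^*,\y^*,\z^*)$ of \eqref{e6 H CSMOP alpha}. The goal is to produce a feasible point of \eqref{e5 H CSPOP alpha} with objective value at most $\tilde F(\x^*,\y^*,\z^*)$. The natural tool is a polarization-type argument on pairs of blocks. Fix $\z^*$ and define the bilinear form $B(\x,\y):=\tilde F(\x,\y,\z^*)$; it is symmetric in $(\x,\y)$ because $\mathcal{S}$ is symmetric. For unit vectors $\x,\y$, consider the midpoint direction $\w=(\x+\y)/\|\x+\y\|$, or the antipodal choice when $\x+\y=\0$.

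The key identity comes from writing $\tilde\x=\tilde\w_0+\tilde\d$ and $\tilde\y=\tilde\w_0-\tilde\d$, where $\tilde\w_0=(1,((\x+\y)/2)^\top)^\top$ and $\tilde\d=(0,((\x-\y)/2)^\top)^\top$. The first coordinates of $\tilde\x$ and $\tilde\y$ are both $1$, so the difference direction has zero first coordinate. Bilinearity then gives
\[
B(\x,\y)=\mathcal{S}[\tilde\w_0,\tilde\w_0,\tilde\z^*]-\mathcal{S}[\tilde\d,\tilde\d,\tilde\z^*].
\]
\Cref{lema1} is the ingredient that controls the second term. Its computation shows that, for any unit $\z$ and any $\tilde\u=(0,\u^\top)^\top$,
\[
\langle\mathcal{S},\tilde\z\circ\tilde\u\circ\tilde\u\rangle=\langle\T,\tilde\z\circ\tilde\u\circ\tilde\u\rangle-\tfrac{\alpha}{3}\|\u\|^2\le 0 .
\]
Hence $B(\x,\y)\ge\mathcal{S}[\tilde\w_0,\tilde\w_0,\tilde\z^*]$.

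The vector $\w_0=(\x+\y)/2$ has norm at most $1$ rather than exactly $1$. To return to the sphere, I would use a second decomposition, $\tilde\w=(1,\w^\top)^\top$ with $\w=\w_0/\|\w_0\|$, and express $\tilde\w_0$ as a combination of $\tilde\w$ and $\tilde{(-\w)}$ (or of $\tilde\w$ and $(1,\0^\top)^\top$). Since $\w_0$ is parallel to $\w$, $\tilde\w_0=\tfrac{1+t}{2}\tilde\w+\tfrac{1-t}{2}\tilde{(-\w)}$ with $t=\|\w_0\|\le1$. Expanding the quadratic form in this convex combination and applying the same negative-curvature bound to the difference $\tilde\w-\tilde{(-\w)}=(0,2\w^\top)^\top$ gives
\[
\mathcal{S}[\tilde\w_0,\tilde\w_0,\tilde\z^*]\ge \tfrac{1+t}{2}B(\w,\w)+\tfrac{1-t}{2}B(-\w,-\w)-(\text{nonpositive}) .
\]
Consequently $\min\{B(\w,\w),B(-\w,-\w)\}\le B(\x,\y)$. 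This is the concavity-along-chords argument restricted to chords whose endpoints both have first coordinate $1$. I expect this "returning to the sphere" step to be the main obstacle, because the homogeneous trick of \cite{zhang2012cubic} does not apply directly. I would verify carefully that every difference vector arising in the expansion has zero first coordinate, so that \Cref{lema1} applies.

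\emph{Step 3 (symmetrize all blocks).} Applying Step 2 to the pair $(\x^*,\y^*)$ yields $\u$ with $\tilde F(\u,\u,\z^*)\le v_M$, hence equal to $v_M$ by optimality. Applying it again to the pairs $(\u,\z^*)$ and, if needed, iterating, the tensor symmetry lets us move the fixed block around. A cleaner route is a maximal-symmetry argument. Among the global minimizers of \eqref{e6 H CSMOP alpha}, choose one with the largest number of equal blocks. If not all three blocks coincide, say $(\u,\u,\z)$ with $\u\ne\z$, apply Step 2 to the pair $(\u,\z)$ with the remaining $\u$ fixed to get $(\u,\v,\v)$ optimal. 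To finish, I would instead use the stronger conclusion that equality in Step 2 forces $\mathcal{S}[\tilde\d,\tilde\d,\cdot]=0$. Alternatively, I would argue through the scalar function $\phi(\x)=\tilde F(\x,\x,\x)$ restricted to the great circle through $\u$ and $\z$, showing via the same chord inequality that its minimum on that circle is at most $\tilde F(\u,\u,\z)$. Either way this yields $v_P\le v_M$, and with Step 1, $v_P=v_M$.
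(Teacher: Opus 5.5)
Your Steps 1 and 2 are correct. Step 2 is the paper's inequality $\min_{\x,\y,\z\in D}\tilde F(\x,\y,\z)\ge\min_{\x,\y\in D}\tilde F(\x,\y,\y)$, reached by a detour. The renormalization to $\pm\w$ is unnecessary, because $\mathcal S[\tilde\z^*,\tilde\x-\tilde\y,\tilde\x-\tilde\y]\le0$ gives $2B(\x,\y)\ge B(\x,\x)+B(\y,\y)$ directly.

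The genuine gap is Step 3. This is exactly where $v_P\le v_M$ has to be proved, and none of the routes you list closes it.

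\emph{Iteration and maximal symmetry.} Symmetrizing two blocks with the third frozen always returns a triple of the form $(\mathbf a,\mathbf b,\mathbf b)$ up to ordering. So neither iterating it nor picking a minimizer with the most coincident blocks forces all three blocks to agree. Your maximal-symmetry step turns $(\u,\u,\z)$ into $(\u,\v,\v)$, which has no more equal blocks than before.

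\emph{Equality route.} It gives only the scalar identity $\mathcal S[\tilde\u,\tilde\u-\tilde\z,\tilde\u-\tilde\z]=0$ at the frozen block. Turning this into a contradiction needs strict negativity in \Cref{lema1} for nonzero directions. The lemma does not provide this and you do not prove it; it is immediate only when $\alpha>3\sqrt2\|\T\|_F$, whereas the theorem allows equality.

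\emph{Great circle.} This alternative is asserted rather than argued.

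What is missing is an inequality that bounds $\tilde T_3$ at a single point by a value with two equal blocks. The paper gets it by applying \Cref{lema1} at two different base points. With $\mathbf h:=\tilde\x-\tilde\y=(0,(\x-\y)^\top)^\top$ and $\x,\y\in D$,
\[
0\ge\mathcal S[\tilde\x,\mathbf h,\mathbf h]+2\,\mathcal S[\tilde\y,\mathbf h,\mathbf h]=\tilde T_3(\x)+2\tilde T_3(\y)-3\tilde F(\x,\y,\y).
\]
Hence $\min\{\tilde T_3(\x),\tilde T_3(\y)\}\le\tilde F(\x,\y,\y)$, and combined with Step 2 this gives $v_P\le v_M$.

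Within your own framework there is also a one-line repair, provided you use the endpoint form of the chord inequality instead of midpoint-and-renormalize. The renormalization replaces the endpoints by $\pm\w$, which is why the frozen block never gets matched. Once $(\u,\u,\z^*)$ is optimal, freeze the first block at $\u$. Then $\mathcal S[\tilde\u,\tilde\u-\tilde\z^*,\tilde\u-\tilde\z^*]\le0$ gives $\tilde T_3(\u)+\tilde F(\u,\z^*,\z^*)\le2\tilde F(\u,\u,\z^*)=2v_M$. By global optimality $\tilde F(\u,\z^*,\z^*)\ge v_M$, hence $\tilde T_3(\u)\le v_M$.
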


\proof Let $\tilde{\x}=(1,\x^\top)^\top\in\mathbb{R}^{n+1}, \tilde{\y}=(1,\y^\top)^\top\in\mathbb{R}^{n+1}$, where $\x, \y\in D$.
By Lemma \ref{lema1}, we have that
$$
\begin{aligned}
0\geq&\langle\T-\alpha\bar{\T},\tilde{\x}\circ(\tilde{\x}-\tilde{\y})\circ(\tilde{\x}-\tilde{\y})\rangle+2\langle\T-\alpha\bar{\T},\tilde{\y}\circ(\tilde{\x}-\tilde{\y})\circ(\tilde{\x}-\tilde{\y})\rangle\\
=&\langle\T-\alpha\bar{\T},\tilde{\x}\circ\tilde{\x}\circ\tilde{\x}\rangle+2\langle\T-\alpha\bar{\T},\tilde{\y}\circ\tilde{\y}\circ\tilde{\y}\rangle-3\langle\T-\alpha\bar{\T},\tilde{\x}\circ\tilde{\y}\circ\tilde{\y}\rangle\\
=&\tilde{T_3}(\x)+2\tilde{T_3}(\y)-3\tilde{F}(\x,\y,\y),\\
\end{aligned}
$$
which implies that
$$
3\min \{\tilde{T_3}(\x)~|~\x\in\mathbb{R}^n, \|\x\|=1\}\leq \tilde{T_3}(\x)+2\tilde{T_3}(\y)\leq 3\tilde{F}(\x,\y,\y).
$$
Since $\x,\y\in D$ are arbitrary, it follows that
\begin{equation}\label{e8}
\min \{\tilde{T_3}(\x)~|~\x\in D\}\leq\min\{\tilde{F}(\x,\y,\y)~|~\x, \y\in D\}.
\end{equation}

On the other hand, by Lemma \ref{lema1} again, for any $\tilde{\z}=(1, \z^\top)^\top, \tilde{\x}=(1, \x^\top)^\top, \tilde{\y}=(1, \y^\top)^\top$, $\x, \y, \z\in D$,
it holds that
$$
\begin{aligned}
0&\geq\langle\T-\alpha\bar{\T}, \tilde{\z}\circ(\tilde{\x}-\tilde{\y})\circ(\tilde{\x}-\tilde{\y})\rangle\\
&=\langle\T-\alpha\bar{\T}, \tilde{\z}\circ\tilde{\x}\circ\tilde{\x}\rangle+\langle\T-\alpha\bar{\T}, \tilde{\z}\circ\tilde{\y}\circ\tilde{\y}\rangle-2\langle\T-\alpha\bar{\T}, \tilde{\z}\circ\tilde{\x}\circ\tilde{\y}\rangle\\
&=\tilde{F}(\z,\x,\x)+\tilde{F}(\z,\y,\y)-2\tilde{F}(\z,\x,\y),
\end{aligned}
$$
i.e.,
\begin{equation}\label{e9}
\min\{\tilde{F}(\x,\y,\z)~|~\x, \y, \z\in D\}\geq \min\{\tilde{F}(\x,\y,\y)~|~\x, \y\in D\},
\end{equation}
where $\tilde{F}(\z,\x,\y)=\tilde{F}(\x,\y,\z)$ since the symmetry of $\T$ and $\bar{\T}$. Combining \eqref{e8}-\eqref{e9} with the apparent result below:
$$
\min\{\tilde{F}(\x,\y,\z)~|~\x, \y, \z\in D\}\leq \min\{\tilde{F}(\x,\y,\y)~|~\x, \y\in D\}\leq \min \{\tilde{T_3}(\x)~|~\x\in D\},
$$
we know that the optimal values of \eqref{e5 H CSPOP alpha} and \eqref{e6 H CSMOP alpha} are equal, and the desired results hold.
\qed

\begin{remark}
Denote the optimal value of \eqref{e1 cspop} (or its homogeneous formulation \eqref{e2 H cspop}) by $T_3^*$,
the optimal value of \eqref{e4 CSPOP alpha} (or equivalently its homogeneous formulation \eqref{e5 H CSPOP alpha}) by $\tilde{T}_3^*$,
and the optimal value of \eqref{e6 H CSMOP alpha} by $\tilde{F}^*$.
From \Cref{thm1}, when $\alpha$ is sufficiently large, problems \eqref{e5 H CSPOP alpha} and \eqref{e6 H CSMOP alpha} are equivalent in the sense that
\[
\tilde{T}_3^*=\tilde{F}^*.
\]
Moreover, since the quadratic shift does not change the minimizers on the sphere $D=\{\x\in\mathbb{R}^n\mid \|\x\|=1\}$, we also have
\[
\arg\min\{T_3(\x)\mid \x\in D\}
=
\arg\min\{\tilde{T}_3(\x)\mid \x\in D\}.
\]
Consequently, the optimal value of the original \eqref{e1 cspop} can be recovered from the solution of \eqref{e6 H CSMOP alpha} via
\[
T_3^*=\tilde{F}^*+\alpha .
\]
\end{remark}
Without the sufficient condition of Lemma~\ref{lema1}, \eqref{e5 H CSPOP alpha} and \eqref{e6 H CSMOP alpha} may fail to share the same optimal value, as the following counterexample shows.

\begin{example} \label{ex 1}
{Let $\T\in\mathbb{R}^{2\times 2\times 2}$ be symmetric with $\T_{122}=\T_{212}=\T_{221}=1$ and all other entries zero, and take $\tilde{\x}=(1,x)^\top$, $\tilde{\y}=(1,y)^\top$, $\tilde{\z}=(1,z)^\top$ with $x,y,z\in\{-1,1\}$; the condition \eqref{suff cond} of Lemma~\ref{lema1} fails for this $\T$. Then $\langle\T, \tilde{\x}\circ\tilde{\x}\circ\tilde{\x}\rangle=3x^2$ and $\langle\T,\tilde{\x}\circ\tilde{\y}\circ\tilde{\z}\rangle=xy+yz+xz$, so that
$\min\{3x^2~|~x\in\{-1,1\}\}=3$ while $\min\{xy+yz+xz~|~x,y,z\in\{-1,1\}\}=-1$: the two optimal values differ.}
\end{example}

From Theorem~\ref{thm1}, we obtain the following corollary.

\begin{corollary}\label{corol1}
Let $\alpha\geq 3\sqrt{2}\|\T\|_F$.
If $\x^*$ is an optimal solution of \eqref{e4 CSPOP alpha} (equivalently \eqref{e5 H CSPOP alpha}), then $(\x^*,\x^*,\x^*)$ is an optimal solution of \eqref{e6 H CSMOP alpha}.
Consequently, \eqref{e4 CSPOP alpha} (or equivalently \eqref{e5 H CSPOP alpha}) and \eqref{e6 H CSMOP alpha} share at least one common optimal solution.
\end{corollary}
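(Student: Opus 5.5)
The plan is to read the corollary off from \Cref{thm1} by a direct value comparison; no new analysis is needed. Let $\x^*$ be an optimal solution of \eqref{e4 CSPOP alpha}, so $\x^*\in D$ and $\tilde{T_3}(\x^*)=\tilde{T}_3^*$. The first step is to note that the triple $(\x^*,\x^*,\x^*)$ is feasible for \eqref{e6 H CSMOP alpha}, since each block lies on the unit sphere $D$.

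The second step is to compute its objective value. With $\tilde{\x}^*=(1,{\x^*}^\top)^\top$, the multilinear form evaluated on the diagonal gives
\[
\tilde F(\x^*,\x^*,\x^*)=\langle\T-\alpha\bar{\T},\tilde{\x}^*\circ\tilde{\x}^*\circ\tilde{\x}^*\rangle=\tilde{T_3}(\x^*)=\tilde{T}_3^*.
\]
Here I use the identity $\alpha\|\x\|^2=\alpha\langle\bar{\T},\tilde{\x}\circ\tilde{\x}\circ\tilde{\x}\rangle$ stated before \eqref{e5 H CSPOP alpha}. This identity is what makes \eqref{e4 CSPOP alpha} and \eqref{e5 H CSPOP alpha} the same problem.

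The third step invokes \Cref{thm1}. Under $\alpha\ge 3\sqrt2\|\T\|_F$ it gives $\tilde F^*=\tilde{T}_3^*$, where $\tilde F^*$ is the optimal value of \eqref{e6 H CSMOP alpha}. Hence the feasible point $(\x^*,\x^*,\x^*)$ attains $\tilde F^*$, so it is optimal for \eqref{e6 H CSMOP alpha}. The ``consequently'' clause then follows by identifying $\x^*$ with the triple $(\x^*,\x^*,\x^*)$: one needs only existence of some $\x^*$, which holds because $\tilde{T_3}$ is continuous on the compact set $D$.

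I expect no step to be a genuine obstacle. The only point requiring care is the bookkeeping that the diagonal of $\tilde F$ coincides with $\tilde{T_3}$, including the homogenized shift term. That is immediate from the definitions.
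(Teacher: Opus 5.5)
Your proof is correct and follows the paper's argument: a minimizer $\x^*$ of \eqref{e4 CSPOP alpha} exists by compactness of $D$, and the diagonal triple $(\x^*,\x^*,\x^*)$ is then optimal for \eqref{e6 H CSMOP alpha}. Your version spells out the value comparison $\tilde F(\x^*,\x^*,\x^*)=\tilde{T}_3^*=\tilde{F}^*$ through Theorem~\ref{thm1}, which the paper's one-line proof leaves implicit.
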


\proof For problem \eqref{e4 CSPOP alpha}, it has at least one optimal solution in $D$ since the continuous objective function must have an optimal solution on a compact set.
Suppose that $\x^*$ is an optimal solution of \eqref{e4 CSPOP alpha}. Then, $\x=\y=\z=\x^*$ is an optimal solution of \eqref{e6 H CSMOP alpha} and the desired result holds.
\qed

Corollary~\ref{corol1} shows that an optimal solution of \eqref{e4 CSPOP alpha} induces an optimal solution of \eqref{e6 H CSMOP alpha}.
However, the converse question arises naturally.
\begin{quote}
If \eqref{e6 H CSMOP alpha} admits an optimal solution $(\x^*,\y^*,\z^*)$ with $\x^*\neq\y^*$ or $\y^*\neq\z^*$, how can we recover an optimal solution of the original problems \eqref{e1 cspop}, \eqref{e4 CSPOP alpha}, or \eqref{e5 H CSPOP alpha}?
\end{quote}
To the best of our knowledge, this question has not been addressed in the existing literature.
The following theorem provides a solution.

\begin{theorem}\label{them2}
Let $\alpha\geq 3\sqrt{2}\|\T\|_F$.
Suppose $(\x^*,\y^*,\z^*)\in D\times D\times D$ is an optimal solution of \eqref{e6 H CSMOP alpha}.
Then $\x^*,\y^*,\z^*$ are all optimal solutions of \eqref{e1 cspop} (equivalently \eqref{e4 CSPOP alpha} and \eqref{e5 H CSPOP alpha}), i.e.,
\[
\langle\T,\tilde{\x}^*\circ\tilde{\x}^*\circ\tilde{\x}^*\rangle
=
\langle\T,\tilde{\y}^*\circ\tilde{\y}^*\circ\tilde{\y}^*\rangle
=
\langle\T,\tilde{\z}^*\circ\tilde{\z}^*\circ\tilde{\z}^*\rangle=\T_3^*.
\]
Here $\tilde{\x}^*=(1,{\x^*}^\top)^\top$, $\tilde{\y}^*=(1,{\y^*}^\top)^\top$, and $\tilde{\z}^*=(1,{\z^*}^\top)^\top$.
\end{theorem}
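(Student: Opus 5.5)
The plan is to use the polarization-type inequalities from the proof of \Cref{thm1}, now applied at an optimal triple, and to force every inequality in the chain to be tight. Write $G:=\T-\alpha\bar{\T}$ and let $\tilde F^*$ be the optimal value of \eqref{e6 H CSMOP alpha}. By \Cref{thm1}, $\tilde F^*=\tilde T_3^*:=\min_{D}\tilde T_3$. Since the shift is constant on $D$, it suffices to show $\tilde T_3(\x^*)=\tilde T_3(\y^*)=\tilde T_3(\z^*)=\tilde F^*$. This gives $T_3(\x^*)=\tilde F^*+\alpha=T_3^*$, and likewise for $\y^*$ and $\z^*$.

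\emph{Step 1: the pairs $(\x^*,\y^*)$ and $(\y^*,\y^*)$.} By \Cref{lema1}, $\langle G,\tilde{\z}^*\circ(\tilde{\x}^*-\tilde{\y}^*)\circ(\tilde{\x}^*-\tilde{\y}^*)\rangle\le 0$. This is legitimate because $\tilde{\x}^*-\tilde{\y}^*$ has zero first coordinate. Expanding gives
\[
\tilde F(\z^*,\x^*,\x^*)+\tilde F(\z^*,\y^*,\y^*)\le 2\tilde F(\x^*,\y^*,\z^*)=2\tilde F^*.
\]
Each term on the left is at least $\tilde F^*$ by optimality. Hence both equal $\tilde F^*$, so $(\z^*,\x^*,\x^*)$ and $(\z^*,\y^*,\y^*)$ are also optimal triples.

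\emph{Step 2: reduce to the diagonal.} Apply the first inequality from the proof of \Cref{thm1} to the pair $(\u,\v)$, where $\u\in\{\z^*\}$ and $\v\in\{\x^*,\y^*\}$. That inequality reads
\[
\tilde T_3(\u)+2\tilde T_3(\v)\le 3\tilde F(\u,\v,\v).
\]
With $\tilde F(\z^*,\x^*,\x^*)=\tilde F^*$ from Step 1, this gives $\tilde T_3(\z^*)+2\tilde T_3(\x^*)\le 3\tilde F^*$. Both terms on the left are at least $\tilde T_3^*=\tilde F^*$. Therefore $\tilde T_3(\z^*)=\tilde T_3(\x^*)=\tilde F^*$. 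Using the optimal triple $(\z^*,\y^*,\y^*)$ in the same way gives $\tilde T_3(\y^*)=\tilde F^*$. Symmetry of $G$ is used here to reorder the arguments of $\tilde F$.

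\emph{Main obstacle.} The delicate point is Step 1. We must check that \Cref{lema1} really applies with the base point $\tilde{\z}^*$ and the direction $\tilde{\x}^*-\tilde{\y}^*$. Concretely, we need $\langle G,\tilde{\z}\circ\tilde{\u}\circ\tilde{\u}\rangle=\tfrac16\tilde{\u}^\top\nabla^2\tilde T_3(\tilde{\z})\tilde{\u}$ for $\tilde{\u}=(0,\u^\top)^\top$, with $\|\z^*\|=1$. Both requirements hold because the constant coordinate cancels in $\tilde{\u}$ and $\z^*\in D$. If one prefers to avoid Step 1, an alternative is to apply the bound from Step 2 directly to optimal triples obtained by permutation. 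However, Step 1 is what turns an off-diagonal optimum into one with a repeated block, so I would keep it.
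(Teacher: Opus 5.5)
Your argument is correct and is essentially the paper's proof with the blocks relabeled. The paper applies \Cref{lema1} at base point $\tilde{\x}^*$ along $\tilde{\y}^*-\tilde{\z}^*$ to make $(\x^*,\y^*,\y^*)$ and $(\x^*,\z^*,\z^*)$ optimal, then closes with the three-term inequality from \Cref{thm1} on the pairs $(\x^*,\y^*)$ and $(\x^*,\z^*)$, exactly as you do with $\z^*$ as the base block. The only cosmetic difference is how you lower-bound $\tilde T_3(\cdot)\ge\tilde F^*$: you use \Cref{thm1}, or simply feasibility of diagonal triples, whereas the paper uses optimality of $(\x^*,\y^*)$ for $\min\tilde F(\x,\y,\y)$; both give the same tightness.
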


\proof To prove the conclusion, it is enough to prove that $\x^*, \y^*, \z^*\in D$ are optimal solutions of \eqref{e4 CSPOP alpha}. First of all,
by Lemma \ref{lema1} and the condition
$$
(\x^*, \y^*, \z^*)=\arg\min\{\tilde{F}(\x,\y,\z)~|~\x, \y, \z\in D\},
$$
we know that
$$
\begin{aligned}
0&\geq\langle\T-\alpha\bar{\T}, \tilde{\x}^*\circ(\tilde{\y}^*-\tilde{\z}^*)\circ(\tilde{\y}^*-\tilde{\z}^*)\rangle\\
&=\langle\T-\alpha\bar{\T}, \tilde{\x}^*\circ\tilde{\y}^*\circ\tilde{\y}^*\rangle+\langle\T-\alpha\bar{\T}, \tilde{\x}^*\circ\tilde{\z}^*\circ\tilde{\z}^*\rangle-2\langle\T-\alpha\bar{\T}, \tilde{\x}^*\circ\tilde{\y}^*\circ\tilde{\z}^*\rangle,
\end{aligned}
$$
which means that
$$
2\langle\T-\alpha\bar{\T}, \tilde{\x}^*\circ\tilde{\y}^*\circ\tilde{\z}^*\rangle\geq\langle\T-\alpha\bar{\T}, \tilde{\x}^*\circ\tilde{\y}^*\circ\tilde{\y}^*\rangle+\langle\T-\alpha\bar{\T}, \tilde{\x}^*\circ\tilde{\z}^*\circ\tilde{\z}^*\rangle.
$$
Combining this with the fact that
$$
\begin{aligned}
&\langle\T-\alpha\bar{\T}, \tilde{\x}^*\circ\tilde{\y}^*\circ\tilde{\y}^*\rangle\geq\langle\T-\alpha\bar{\T}, \tilde{\x}^*\circ\tilde{\y}^*\circ\tilde{\z}^*\rangle=\min\{\tilde{F}(\x,\y,\z)~|~\x, \y, \z\in D\},\\
&\langle\T-\alpha\bar{\T}, \tilde{\x}^*\circ\tilde{\z}^*\circ\tilde{\z}^*\rangle\geq\langle\T-\alpha\bar{\T}, \tilde{\x}^*\circ\tilde{\y}^*\circ\tilde{\z}^*\rangle=\min\{\tilde{F}(\x,\y,\z)~|~\x, \y, \z\in D\},
\end{aligned}
$$
we have that
{
\begin{equation}\label{e10}
\langle\T-\alpha\bar{\T}, \tilde{\x}^*\circ\tilde{\y}^*\circ\tilde{\y}^*\rangle=\langle\T-\alpha\bar{\T}, \tilde{\x}^*\circ\tilde{\z}^*\circ\tilde{\z}^*\rangle=\langle\T-\alpha\bar{\T}, \tilde{\x}^*\circ\tilde{\y}^*\circ\tilde{\z}^*\rangle=\T_3^*.
\end{equation}
}
This implies that $(\x^*, \y^*)$ and $(\x^*, \z^*)$ are optimal solutions of {$\min\{\tilde{F}(\x,\y,\y)~|~\x, \y\in D\}$}.

{On the other hand, applying Lemma \ref{lema1} along $\tilde{\x}^*-\tilde{\y}^*$ exactly as in the first part of the proof of \Cref{thm1} (with $(\x,\y)$ replaced by $(\x^*,\y^*)$) gives}
$
3\langle\T-\alpha\bar{\T}, \tilde{\x}^*\circ\tilde{\y}^*\circ\tilde{\y}^*\rangle\geq\langle\T-\alpha\bar{\T}, \tilde{\x}^*\circ\tilde{\x}^*\circ\tilde{\x}^*\rangle+2\langle\T-\alpha\bar{\T}, \tilde{\y}^*\circ\tilde{\y}^*\circ\tilde{\y}^*\rangle.
$
{Since $(\x^*,\y^*)$ minimizes $\tilde{F}(\x,\y,\y)$ over $D\times D$, we also have} $\langle\T-\alpha\bar{\T}, \tilde{\x}^*\circ\tilde{\x}^*\circ\tilde{\x}^*\rangle\geq\langle\T-\alpha\bar{\T}, \tilde{\x}^*\circ\tilde{\y}^*\circ\tilde{\y}^*\rangle$ and $\langle\T-\alpha\bar{\T}, \tilde{\y}^*\circ\tilde{\y}^*\circ\tilde{\y}^*\rangle\geq\langle\T-\alpha\bar{\T}, \tilde{\x}^*\circ\tilde{\y}^*\circ\tilde{\y}^*\rangle$. Combining the three inequalities yields
\begin{equation}\label{e11}
\langle\T-\alpha\bar{\T}, \tilde{\x}^*\circ\tilde{\y}^*\circ\tilde{\y}^*\rangle=\langle\T-\alpha\bar{\T}, \tilde{\x}^*\circ\tilde{\x}^*\circ\tilde{\x}^*\rangle=\langle\T-\alpha\bar{\T}, \tilde{\y}^*\circ\tilde{\y}^*\circ\tilde{\y}^*\rangle.
\end{equation}
{By the same argument applied to the pair $(\x^*,\z^*)$,}
\begin{equation}\label{e12}
\langle\T-\alpha\bar{\T}, \tilde{\x}^*\circ\tilde{\z}^*\circ\tilde{\z}^*\rangle=\langle\T-\alpha\bar{\T}, \tilde{\x}^*\circ\tilde{\x}^*\circ\tilde{\x}^*\rangle=\langle\T-\alpha\bar{\T}, \tilde{\z}^*\circ\tilde{\z}^*\circ\tilde{\z}^*\rangle.
\end{equation}
Furthermore, from \eqref{e10}-\eqref{e12}, we have
$$
\langle\T-\alpha\bar{\T},\tilde{\x}^*\circ\tilde{\x}^*\circ\tilde{\x}^*\rangle=\langle\T-\alpha\bar{\T},\tilde{\y}^*\circ\tilde{\y}^*\circ\tilde{\y}^*\rangle=\langle\T-\alpha\bar{\T},\tilde{\z}^*\circ\tilde{\z}^*\circ\tilde{\z}^*\rangle,
$$
which implies that $\x^*, \y^*, \z^*$ are all optimal solutions of \eqref{e4 CSPOP alpha} (or \eqref{e1 cspop} and \eqref{e5 H CSPOP alpha}), and the desired result holds.
\qed

\subsection{Numerical Algorithm for CSPOP}
\label{sec pam algo}

In this subsection, we develop a numerical algorithm for solving \eqref{e1 cspop}.
In recent years, the proximal alternating minimization (PAM) algorithm has been extensively studied for spherical-constrained homogeneous polynomial optimization problems due to its strong empirical performance and numerical stability (see, e.g., \cite{chen2022efficient,chen2025tensor}). From Theorems \ref{thm1} and \ref{them2}, under suitable assumptions (i.e., for sufficiently large $\alpha>0$), it follows that the optimal value of \eqref{e1 cspop} can be recovered from the optimal value of the cubic spherical multilinear optimization problem \eqref{e6 H CSMOP alpha}.
Motivated by this equivalence and the multi-block structure of \eqref{e6 H CSMOP alpha}, we extend the PAM framework from homogeneous polynomial optimization to the present inhomogeneous polynomial setting. Specifically, we adopt a block coordinate descent scheme in which each block variable is updated sequentially while keeping the others fixed. The resulting algorithm can be viewed as a PAM-type method tailored to the multilinear structure of the CSMOP. {Its convergence properties are established in \Cref{thm3}.}

We present the algorithm in \Cref{alg1}. {\Cref{thm3} proves monotonic decrease, square-summability of the increments, and stationarity of every accumulation point.}

\begin{algorithm}[!htbp]
	\caption{(Proximal Alternating Minimization Algorithm for \eqref{e6 H CSMOP alpha}).}\label{alg1}
	\begin{algorithmic}[1]
		\STATE Let $\alpha\geq3\sqrt{2}\|\T\|_F$, {tolerance $\varepsilon_{\rm in}>0$}, ${\bf x}^{(0)},\y^{(0)},\z^{(0)}\in D$, {proximal weights $\beta_i> 0$}$,\;(i=1,2,3)$.
		\FOR{$\kappa =0,1,2,\ldots$}
		\STATE Update $({\bf x}^{(\kappa+1)},\y^{(\kappa+1)},\z^{(\kappa+1)})$ sequentially via
		\begin{subnumcases}{\label{PAMA}}
		{\bf x}^{(\kappa+1)}=\arg\min\limits_{\|{\bf x}\|=1}\left[\tilde{F}({\bf x},\y^{(\kappa)},\z^{(\kappa)})+{\frac{\beta_1}{2}}\|{\bf x}-{\bf x}^{(\kappa)}\|^2\right],\label{PAMA-1} \\
		\y^{(\kappa+1)}=\arg\min\limits_{\|\y\|=1}\left[\tilde{F}({\bf x}^{(\kappa+1)},\y,\z^{(\kappa)})+{\frac{\beta_2}{2}}\|\y-\y^{(\kappa)}\|^2\right], \label{PAMA-2}\\
		\z^{(\kappa+1)}=\arg\min\limits_{\|\z\|=1}\left[\tilde{F}({\bf x}^{(\kappa+1)},\y^{(\kappa+1)},\z)+{\frac{\beta_3}{2}}\|\z-\z^{(\kappa)}\|^2\right]. \label{PAMA-3}
		\end{subnumcases}
		\STATE Find $\u^{(\kappa+1)}\in\arg\min\left\{\tilde{T_3}({\bf x}^{(\kappa+1)}),\tilde{T_3}({\bf y}^{(\kappa+1)}),\tilde{T_3}({\bf z}^{(\kappa+1)})\right\}$.
		\STATE {Stop when the maximum projected block KKT residual is at most $\varepsilon_{\rm in}$, and return $\u^{(\kappa+1)}$.}
		\ENDFOR
	\end{algorithmic}
\end{algorithm}

\begin{remark}
Algorithm \ref{alg1} is straightforward to implement under spherical constraints. Since the linear independence constraint qualification automatically holds, the (local) optimal solution for each subproblem of Algorithm \ref{alg1} is guaranteed to be a KKT point. Considering the $\x$-subproblem \eqref{PAMA-1},
\begin{eqnarray}
\label{subprob of PAM}
{\bf x}^{(\kappa+1)}=\arg\min\limits_{\|{\bf x}\|=1}\left[\tilde{F}({\bf x},\y^{(\kappa)},\z^{(\kappa)})+{\frac{\beta_1}{2}}\|{\bf x}-{\bf x}^{(\kappa)}\|^2\right].
\end{eqnarray}
{
After discarding terms independent of $\x$, the objective in
\eqref{subprob of PAM} is linear on the sphere. Define
\[
\mathbf q_x^{(\kappa)}
=(\0,I_n)(\T-\alpha\bar{\T})
 \tilde\y^{(\kappa)}\tilde\z^{(\kappa)}
-\beta_1\x^{(\kappa)}.
\]
If $\mathbf q_x^{(\kappa)}\ne0$, its unique block minimizer is
\begin{equation}
\label{closed form sol}
\x^{(\kappa+1)}=-\frac{\mathbf q_x^{(\kappa)}}
{\|\mathbf q_x^{(\kappa)}\|}.
\end{equation}
Thus the sign is fixed by minimization; it is not an arbitrary choice between
two stationary points. If $\mathbf q_x^{(\kappa)}=0$, the previous block may
be retained. The $\y$- and $\z$-updates have the same form, using the newest
available blocks. Exact minimization of these proximal block models, rather
than the sign of the unregularized multilinear objective, supplies the descent
inequality used below.
}
\end{remark}

Now, we first establish the subsequential convergence of the sequence $\{\t^{(\kappa)}=({\bf x}^{(\kappa)}, \y^{(\kappa)}, \z^{(\kappa)})\}$ generated by Algorithm \ref{alg1}.
For the sake of simplicity, denote $\t=({\bf x},\y,\z)$ and $\tilde{F}({\bf x}, \y, \z)=\tilde{F}(\t)$.
\begin{theorem}\label{thm3}{\rm (Subsequential Convergence)} Let $\{\t^{(\kappa)}\}$ be an infinite sequence generated by Algorithm \ref{alg1}.
Let ${\bar{\beta}:=\min\{\beta_1,\beta_2,\beta_3\}>0}$. Then the following statements hold.

\begin{enumerate}
    \item The function sequence $\{\tilde{F}(\t^{(\kappa)})\}$ is nonincreasing and convergent, and
{$\sum_{\kappa=0}^{+\infty}\|\t^{(\kappa+1)}-\t^{(\kappa)}\|^2<+\infty$. In particular, $\|\t^{(\kappa+1)}-\t^{(\kappa)}\|\to0$.}

    \item  Suppose $\bar{\t}$ is a cluster point of $\{\t^{(\kappa)}\}$. Then $\bar{\t}$ is a KKT point of \eqref{e6 H CSMOP alpha} and
$\lim_{\kappa \rightarrow+\infty}\tilde{F}(\t^{(\kappa)})=\tilde{F}(\bar{\t})$. {Since the linear independence constraint qualification holds everywhere on $D\times D\times D$, $\bar{\t}$ is equivalently a stationary point of \eqref{e6 H CSMOP alpha}.}
\end{enumerate}
\end{theorem}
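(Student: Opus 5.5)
The plan follows the standard PAM template: a sufficient-decrease inequality, telescoping, then a limit in the block optimality conditions.

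\emph{Part 1: descent and square-summability.} Since $\x^{(\kappa+1)}$ exactly minimizes the proximal block model \eqref{PAMA-1} over $D$, and $\x^{(\kappa)}\in D$ is feasible, we have
\[
\tilde F(\x^{(\kappa+1)},\y^{(\kappa)},\z^{(\kappa)})+\tfrac{\beta_1}{2}\|\x^{(\kappa+1)}-\x^{(\kappa)}\|^2\le \tilde F(\x^{(\kappa)},\y^{(\kappa)},\z^{(\kappa)}).
\]
This relies only on exact block minimization, which the closed form \eqref{closed form sol} provides; if $\mathbf q_x^{(\kappa)}=0$, the model is constant on $D$ and retaining the block is also a minimizer. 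The same inequalities for the $\y$- and $\z$-updates, using the newest blocks, sum to
\[
\tilde F(\t^{(\kappa+1)})+\tfrac{\bar\beta}{2}\|\t^{(\kappa+1)}-\t^{(\kappa)}\|^2\le\tilde F(\t^{(\kappa)}).
\]
Hence $\{\tilde F(\t^{(\kappa)})\}$ is nonincreasing. It is bounded below because $\tilde F$ is continuous on the compact set $D\times D\times D$, so it converges to some $F^\infty$. Telescoping gives $\sum_\kappa\|\t^{(\kappa+1)}-\t^{(\kappa)}\|^2\le \tfrac{2}{\bar\beta}\bigl(\tilde F(\t^{(0)})-F^\infty\bigr)<\infty$, and in particular the increments tend to zero.

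\emph{Part 2: stationarity of cluster points.} Let $\t^{(\kappa_j)}\to\bar\t$. By Part 1, $\t^{(\kappa_j+1)}\to\bar\t$ as well, and so do the intermediate tuples $(\x^{(\kappa_j+1)},\y^{(\kappa_j)},\z^{(\kappa_j)})$ and $(\x^{(\kappa_j+1)},\y^{(\kappa_j+1)},\z^{(\kappa_j)})$. Since LICQ holds on the sphere, each block update satisfies a KKT system. For the $\x$-block,
\[
\nabla_\x\tilde F(\x^{(\kappa+1)},\y^{(\kappa)},\z^{(\kappa)})+\beta_1(\x^{(\kappa+1)}-\x^{(\kappa)})+2\mu^{(\kappa+1)}\x^{(\kappa+1)}=0.
\]
Here the multiplier is $\mu^{(\kappa+1)}=-\tfrac12\langle\nabla_\x\tilde F(\cdot)+\beta_1(\x^{(\kappa+1)}-\x^{(\kappa)}),\x^{(\kappa+1)}\rangle$, obtained by taking the inner product with $\x^{(\kappa+1)}$ and using $\|\x^{(\kappa+1)}\|=1$. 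These multipliers are bounded and converge along the subsequence, because $\nabla\tilde F$ is a continuous (multilinear) map. Passing to the limit, the proximal term vanishes, giving $\nabla_\x\tilde F(\bar\t)+2\bar\mu_1\bar\x=0$ with $\|\bar\x\|=1$. The $\y$- and $\z$-blocks are handled identically, so $\bar\t$ is a KKT point of \eqref{e6 H CSMOP alpha}. By continuity, $\tilde F(\bar\t)=\lim_j\tilde F(\t^{(\kappa_j)})=F^\infty$, and since the whole value sequence converges, $\lim_\kappa\tilde F(\t^{(\kappa)})=\tilde F(\bar\t)$. Finally, LICQ makes KKT equivalent to stationarity, meaning the projected block gradients $(I-\bar\x\bar\x^\top)\nabla_\x\tilde F(\bar\t)$, and their $\y$, $\z$ analogues, vanish.

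\emph{Main obstacle.} The delicate step is the limit passage in the block KKT conditions. The gradient of each block is evaluated at a mixed tuple of old and new iterates, so the argument must invoke $\|\t^{(\kappa+1)}-\t^{(\kappa)}\|\to0$ to show that all these mixed tuples converge to $\bar\t$. The argument must also handle the degenerate case $\mathbf q^{(\kappa)}=0$ consistently. In that case the KKT equation still holds with the retained block, since the linear term $\mathbf q$ vanishes and any multiplier choice compatible with the equation works; the limit argument is therefore unaffected.
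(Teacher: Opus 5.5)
Your proposal is correct and follows essentially the same route as the paper: you sum the three exact block-minimization inequalities into the $\bar\beta$-descent inequality and telescope to get square-summability, then pass to the limit in the block KKT systems along a convergent subsequence, using the vanishing increments to control the mixed old/new tuples. The only cosmetic differences are that you write each multiplier explicitly as a continuous function of converging quantities, so it converges directly rather than along a further subsequence, and that you treat the degenerate case $\mathbf q^{(\kappa)}=0$ explicitly.
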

\proof
By Algorithm \ref{alg1}, it follows that
$$
\begin{aligned}
&\tilde{F}(\t^{(\kappa+1)})+\frac{\beta_3}{2}\|\z^{(\kappa+1)}-\z^{(\kappa)}\|^2\leq \tilde{F}({\bf x}^{(\kappa+1)},\y^{(\kappa+1)},\z^{(\kappa)}),\\
&\tilde{F}({\bf x}^{(\kappa+1)},\y^{(\kappa+1)},\z^{(\kappa)})+\frac{\beta_2}{2}\|\y^{(\kappa+1)}-\y^{(\kappa)}\|^2\leq \tilde{F}({\bf x}^{(\kappa+1)},\y^{(\kappa)},\z^{(\kappa)}),\\
&\tilde{F}({\bf x}^{(\kappa+1)},\y^{(\kappa)},\z^{(\kappa)})+\frac{\beta_1}{2}\|{\bf x}^{(\kappa+1)}-{\bf x}^{(\kappa)}\|^2\leq \tilde{F}(\t^{(\kappa)}),
\end{aligned}
$$
which implies
\begin{equation}\label{e16}
\tilde{F}(\t^{(\kappa+1)})+\frac{{\bar{\beta}}}{2}\|\t^{(\kappa+1)}-\t^{(\kappa)}\|^2\leq \tilde{F}(\t^{(\kappa)}).
\end{equation}
Therefore, the sequence $\{\tilde{F}(\t^{(\kappa)})\}$ is nonincreasing. Since $\tilde{F}(\t)$ is bounded below on the compact set $D\times D\times D$, it converges. {Summing \eqref{e16} proves square-summability of the increments, which is the conclusion directly justified by this descent inequality.} The conclusion 1 follows.

The sequence $\{\t^{(\kappa)}\}$ is bounded and therefore has cluster points. Suppose that $\{\t^{(\kappa_j)}\}$ is a subsequence of  $\{\t^{(\kappa)}\}$ converging to $\bar{\t}=(\bar{{\bf x}}, \bar{\y},\bar{\z})$. By the argument above, $\lim_{\kappa \rightarrow+\infty}\tilde{F}(\t^{(\kappa)})=\tilde{F}(\bar{\t})$. On the other hand, by \eqref{e16}, it holds that
$$
\lim_{\kappa \rightarrow+\infty}\|\t^{(\kappa+1)}-\t^{(\kappa)}\|=0.
$$
Then, we have that $\lim_{j\rightarrow+\infty}\t^{(\kappa_j+1)}=\bar{\t}$.
{
For each $j$, the block optimality conditions admit multipliers
$\lambda_{1,j},\lambda_{2,j},\lambda_{3,j}$. For example,
\[
\nabla_{\x}\tilde F(\x^{(\kappa_j+1)},\y^{(\kappa_j)},\z^{(\kappa_j)})
+\beta_1(\x^{(\kappa_j+1)}-\x^{(\kappa_j)})
=\lambda_{1,j}\x^{(\kappa_j+1)},
\]
with analogous equations for $\y$ and $\z$. Taking the inner product with
the corresponding unit block shows that the multiplier sequences are
bounded. Passing to convergent multiplier subsequences, using continuity and
the vanishing increments, yields the three sphere-constrained KKT equations
at $\bar\t$. This proves the second statement.
}
\qed

\section{Convergence and Complexity Analysis}
\label{sec: convergence}

In this section we establish the global convergence and evaluation complexity of \Cref{HTM algo}. The algorithm reduces $\|\nabla_{\x }f(\x_k)\|$ below $\epsilon$ by operating in \emph{two phases}, separated by the gradient threshold $\epsilon^a$. Throughout this section we fix
\[
a:=\frac{p+1}{2p},\qquad b:=\frac{a}{2}=\frac{p+1}{4p},
\]
so that for $p=3$ one has $a=\tfrac23$ and $b=\tfrac13$.

\begin{itemize}[topsep=2pt,itemsep=2pt,parsep=0pt,leftmargin=*]
\item \textbf{Far-field phase} ($\|\mathbf{g}_k\|\ge\epsilon^a$, \Cref{sec far field}). A homogeneous-tensor (HTM) step on the trust-region boundary, accepted through the ratio test, guarantees a per-step decrease of order $\epsilon^{(p+1)/p}$. Summed over the phase this yields at most $O(\epsilon^{-(p+1)/p})$ iterations.
\item \textbf{Near-stationary phase} ($\epsilon\le\|\mathbf{g}_k\|\le\epsilon^a$). The radius is selected according to the smallest Hessian eigenvalue $\lambda_{\min}(\mathbf{H}_k)$, giving three regimes:
\begin{itemize}
\item \emph{negative curvature} ($\lambda_{\min}(\mathbf{H}_k)\le-\epsilon^b$, \Cref{sec:NSNC}): a boundary step extracts decrease from the quadratic term;
\item \emph{weak curvature} ($-\epsilon^b<\lambda_{\min}(\mathbf{H}_k)\le\epsilon^b$, \Cref{sec:NSWC}): decrease is driven by the cubic term, under the nondegeneracy condition of \Cref{ass:weak-curvature};
\item \emph{convex} ($\lambda_{\min}(\mathbf{H}_k)\ge\epsilon^b$, \Cref{sec Refinement}): the Taylor model is locally convex and {a quadratically convergent Newton-type refinement} drives $\|\nabla_{\x} f\|$ below $\epsilon$.
\end{itemize}
\end{itemize}

{The far-field phase dominates the iteration count, so the overall evaluation complexity is the AR$p$-optimal $O(\epsilon^{-(p+1)/p})$, while the near-stationary regimes contribute strictly lower-order terms. This is the precise sense in which \Cref{HTM algo} surpasses the $O(\epsilon^{-2})$ barrier of a naive trust-region method on a $p\ge3$ Taylor model~\cite[Sec.~4.1.3]{cartis2022evaluation}: there, a radius rule of the form $\Delta_k\propto\|\mathbf{g}_k\|$ forces quadratic-type $\Theta(\|\mathbf{g}_k\|^2)$-decrease steps all the way down to $\|\mathbf{g}_k\|=\epsilon$, which sums to $\epsilon^{-2}$; here, those expensive steps are taken only until $\|\mathbf{g}_k\|=\epsilon^a$, summing to $\epsilon^{-2a}=\epsilon^{-(p+1)/p}$, and the residual stretch $\|\mathbf{g}_k\|\in[\epsilon,\epsilon^a]$ is of strictly lower order. In this sense Ada-HTM is a trust-region method only in its globalization: its per-step model decrease and radius schedule match those of the optimal AR$p$ family, and the resulting rate attains the lower bound $\Omega(\epsilon^{-(p+1)/p})$, valid for any method using derivatives up to order $p$~\cite{carmon2020lower,carmon2021lower}. We emphasize that this is not a redefinition of the tolerance: the cheap near-stationary tail is justified by fast local convergence, exactly as in the local analysis of AR$p$ methods. Finally, we solve the boundary subproblem $\min_{\|\s\|=\Delta_k}T_p(\x_k,\s)$ rather than the interior problem $\min_{\|\s\|\le\Delta_k}T_p$, since the latter is NP-hard for $p\ge3$~\cite{nesterov2003random}, whereas the radius-controlled boundary problem is exactly the cubic spherical problem solved by \Cref{alg1}. \Cref{tab:roadmap} collects the per-regime guarantees together with the precise references.}

\begin{table}[h]
\centering
\caption{Roadmap of \Cref{sec: convergence}. Constants $a=\frac{p+1}{2p}$, $b=\frac{p+1}{4p}$; for $p=3$, $a=\frac23$, $b=\frac13$. ``Dominated'' means the iteration count is of strictly lower order than the far-field bound, so it does not affect the overall complexity.}
\label{tab:roadmap}
\renewcommand{\arraystretch}{1.3}
\begin{tabular}{p{3cm}p{3.1cm}p{3cm}p{2.2cm}p{2.2cm}}
\hline
Regime & Condition & Radius $\Delta_k$ & Decrease/step & \#\,iterations\\
\hline
Far-field & $\|\mathbf{g}_k\|\ge\epsilon^a$ & ${\ge\kappa_\Delta\epsilon^a}$ & $\epsilon^{(p+1)/p}$ & $O(\epsilon^{-\frac{p+1}{p}})$ \newline (\Cref{thm far complexity})\\
Negative curvature & $\lambda_{\min}(\mathbf{H}_k)\le-\epsilon^b$ & ${C_{\rm ns}\epsilon^b}$ & $\epsilon^{\frac34\frac{p+1}{p}}$ & dominated \newline (\Cref{thm:NSNC})\\
Weak curvature & $|\lambda_{\min}(\mathbf{H}_k)|<\epsilon^b$ & ${C_{\rm ns}\epsilon^b}$ & $\epsilon^{\frac34\frac{p+1}{p}}$ & dominated \newline (\Cref{thm:NSWC})\\
Convex & $\lambda_{\min}(\mathbf{H}_k)\ge\epsilon^b$ & local convex region $\Omega_k$ & $\|\nabla_{\x} f\|\!\to\!\epsilon$ & ${O(\log\log(1/\epsilon))}$ \newline ({\Cref{cor:newton-refine}})\\
\hline
\end{tabular}
\end{table}

\begin{remark}[Only needs sufficient model decrease and an approximate subproblem solver]
\label{rem:inexact-pam}
{
The complexity guarantee of \Cref{HTM algo} does not require a \emph{global}
minimizer of $\min_{\|\s\|=\Delta_k}T_p(\x_k,\s)$. PAM generates a
stationary candidate, but stationarity or a small inner KKT residual alone
does not imply the model comparison required by the outer proofs. The
reference safeguard \eqref{eq:safeguarded-boundary-step} supplies that
comparison deterministically: in the far field it uses a gradient step, and
in the nonconvex near field it uses a signed curvature direction. Thus the
stationary-point result for PAM and the evaluation-complexity result for
Ada--HTM are logically separate but algorithmically compatible.
}
\end{remark}

\subsection{Assumptions and technical preliminaries}
\label{sec:assumptions}

\begin{assumption}
$f\in \C^{p, 1}(\R^n)$ with $p \ge 3$ which means that $f$ is $p$-times continuously differentiable, bounded below by a constant $f_\text{low}$ and the $p$th derivative of $f$ is globally Lipschitz continuous. Namely, there exists a constant $L > 0$ such that, for all
$\x, \y \in \R^n$, $\|\nabla_{\x}^p f(\x) - \nabla_{\x}^p f(\y)\| \le (p-1)! L \|\x-\y\|.$
\label{assumption Liptz}
\end{assumption}

\begin{definition}[{Tensor operator norm}]
\label{def:tensor-norm}
{
For a $j$th-order tensor $T \in \mathbb{R}^{n\times\cdots\times n}$, define
\[
\|T\|_{\rm op}:=
\max_{\|v_1\|=\cdots=\|v_j\|=1}|T[v_1,\ldots,v_j]|.
\]
This norm directly controls the multilinear contractions used in the Taylor
bounds below; an entrywise maximum would introduce dimension-dependent
factors and is therefore insufficient for the stated inequalities.
}
\end{definition}

\begin{assumption}[Uniformly bounded derivatives]
\label{assumption:bounded-derivatives}
Let $f\in C^{p,1}(\mathbb{R}^n)$.
{Let $\mathcal L_0$ be a bounded open set containing the sublevel set
$\{\x : f(\x)\le f(\x_0)\}$ together with all trial points and trial segments
$[\x_k,\x_k+\s_k]$ generated by the algorithm. Assume that the derivatives of $f$ up to
order $p$ are uniformly bounded on $\mathcal L_0$}, namely
\[
{\|\nabla_{\x}^j f(\x)\|_{\rm op} \le \Lambda_j,
\qquad
\forall\, \x \in \mathcal L_0,
\quad j=1,\dots,p,}
\]
where $\Lambda_j>0$ are constants independent of $\x$. {Accepted iterates
remain in the sublevel set by monotonicity of the accepted objective values, and the
radii employed by the algorithm keep the trial segments in $\mathcal L_0$ for all
sufficiently small $\epsilon$. This formulation accommodates polynomial test functions,
whose derivatives are unbounded on all of $\mathbb{R}^n$.}
\end{assumption}

\begin{lemma}(Adapted from \cite[Lemma 2.1]{cartis2020concise})
   Let $f\in \C^{p, 1}(\R^n)$. $T_p(\x, \s)$ is the Taylor approximation of $f (\x + \s)$ about $\x$. Then, under Assumption \ref{assumption Liptz}, for all $\x, \s \in \R^n$,
\begin{eqnarray}
      |f(\x+\s) - T_p(\x, \s) | &\le&  \frac{L}{p+1}\|\s\|^{p+1},
            \label{lip bound 1}
      \\ \|\nabla_{\x} f(\x+\s) - \nabla_{\s} T_p(\x, \s) \|  &\le& L \|\s\|^p.
      \label{lip bound 2}
\end{eqnarray}
\label{tech lemma 1}
\end{lemma}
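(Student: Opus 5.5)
Both bounds follow from Taylor's theorem with integral remainder, combined with the Lipschitz continuity of $\nabla_{\x}^p f$ from Assumption~\ref{assumption Liptz}. The argument follows \cite[Lemma 2.1]{cartis2020concise}; the only thing to watch is the $(p-1)!$ normalisation of $L$.

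\textbf{Function-value bound \eqref{lip bound 1}.} Fix $\x,\s$ and set $\phi(t)=f(\x+t\s)$ on $[0,1]$. Then $\phi^{(j)}(0)=\nabla_{\x}^j f(\x)[\s]^j$. Taylor's formula with integral remainder of order $p-1$ gives
\[
f(\x+\s)=\sum_{j=0}^{p-1}\frac{1}{j!}\nabla_{\x}^j f(\x)[\s]^j+\int_0^1\frac{(1-t)^{p-1}}{(p-1)!}\nabla_{\x}^p f(\x+t\s)[\s]^p\,dt .
\]
Since $\int_0^1\frac{(1-t)^{p-1}}{(p-1)!}\,dt=\frac1{p!}$, we can write the $p$th Taylor term as the same integral with $\nabla_{\x}^p f(\x)$ in place of $\nabla_{\x}^p f(\x+t\s)$. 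Subtracting,
\[
f(\x+\s)-T_p(\x,\s)=\int_0^1\frac{(1-t)^{p-1}}{(p-1)!}\big(\nabla_{\x}^p f(\x+t\s)-\nabla_{\x}^p f(\x)\big)[\s]^p\,dt.
\]
The Lipschitz assumption, read in the operator norm of Definition~\ref{def:tensor-norm}, gives $|(\nabla_{\x}^p f(\x+t\s)-\nabla_{\x}^p f(\x))[\s]^p|\le (p-1)!\,L\,t\|\s\|^{p+1}$. The factor $(p-1)!$ cancels, leaving
\[
|f(\x+\s)-T_p(\x,\s)|\le L\|\s\|^{p+1}\int_0^1(1-t)^{p-1}t\,dt=\frac{L\|\s\|^{p+1}}{p(p+1)}\le\frac{L}{p+1}\|\s\|^{p+1}.
\]

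\textbf{Gradient bound \eqref{lip bound 2}.} Apply the same argument to each directional component of the gradient. Fix a unit vector $\v$ and set $\psi(t)=\nabla_{\x} f(\x+t\s)[\v]$. Note that
\[
\nabla_{\s}T_p(\x,\s)[\v]=\sum_{j=1}^p\frac{1}{(j-1)!}\nabla_{\x}^j f(\x)[\s]^{j-1}[\v].
\]
Expand $\psi$ to order $p-2$ with integral remainder, which involves $\nabla_{\x}^p f(\x+t\s)[\s]^{p-1}[\v]$ with weight $\frac{(1-t)^{p-2}}{(p-2)!}$. Subtracting the matching term as before gives
\[
|\psi(1)-\nabla_{\s}T_p(\x,\s)[\v]|\le (p-1)!\,L\|\s\|^{p}\int_0^1\frac{(1-t)^{p-2}t}{(p-2)!}\,dt=\frac{L\|\s\|^p}{p}\le L\|\s\|^p .
\]
Taking the supremum over unit $\v$ yields \eqref{lip bound 2}. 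This is valid because $p\ge3$, so $p-2\ge1$.

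The main care point is the norm convention. The Lipschitz constant must be read in the operator norm $\|\cdot\|_{\rm op}$ of Definition~\ref{def:tensor-norm}, so that contracting with $[\s]^{p-1}[\v]$ costs exactly $\|\s\|^{p-1}\|\v\|$. The $(p-1)!$ scaling in Assumption~\ref{assumption Liptz} is what makes the constants come out as $L/(p+1)$ and $L$, with slack to spare. Everything else is routine calculus.
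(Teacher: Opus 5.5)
Your proof is correct and is the standard Taylor-with-integral-remainder plus Lipschitz argument of Cartis, Gould and Toint; the paper's own proof consists only of the citation to \cite[Appendix A.1]{cartis2020concise}. Under the $(p-1)!$ normalisation your computation gives the sharper constants $\frac{L}{p(p+1)}$ and $\frac{L}{p}$, which imply the stated bounds, and the gradient expansion in fact already works for $p\ge 2$, not only $p\ge 3$.
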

\begin{proof}
  The proof can be found in \cite[Appendix A.1]{cartis2020concise}.
\end{proof}

We collect here the remaining objects used throughout \Cref{sec: convergence}: a \emph{reference direction}, a genericity condition on the cubic term, and a smallness condition on $\epsilon$.

\begin{definition}[Reference direction]
\label{def:reference-direction}
For an iterate $\x_k$ with Hessian $\mathbf{H}_k$ and radius $\Delta_k$, let $\s_k^{(h)}\in\R^n$ satisfy $\|\s_k^{(h)}\|=\Delta_k$ and $\mathbf{H}_k[\s_k^{(h)}]^2=\lambda_{\min}(\mathbf{H}_k)\Delta_k^2$. Among $\pm\s_k^{(h)}$ we choose the sign so that ${\nabla^3 f(\x_k)[\s_k^{(h)}]^3\le0}$.
\end{definition}

\begin{assumption}[Nondegeneracy of the cubic term]
\label{ass:weak-curvature}
In the weak-curvature regime $-\epsilon^b<\lambda_{\min}(\mathbf{H}_k)\le\epsilon^b$, the cubic term does not vanish along the reference direction of \Cref{def:reference-direction}; that is, $\nabla^3 f(\x_k)[\s_k^{(h)}]^3=-\tilde{\Lambda}_3\Delta_k^3<0$ for some constant $\tilde{\Lambda}_3>0$.
\end{assumption}
\noindent Assumption \ref{ass:weak-curvature} is the only genericity condition in our analysis, and we state it explicitly. It is used solely in the weak-curvature regime: there both the gradient and the Hessian are small, so descent must be supplied by the cubic term, and the assumption guarantees this term is active along the weakest-curvature direction. It is the trust-region counterpart of the nondegeneracy conditions standard in the local analysis of AR$p$ methods.

{
Fix
\begin{equation}
C_{\rm ns}\ge
\max\left\{2p,
\left(\frac{6(p+1)}{\tilde\Lambda_3}\right)^{1/2},
\frac{3(p+1)}{\tilde\Lambda_3}\right\},
\qquad \Delta_k=C_{\rm ns}\epsilon^b
\label{eq:near-radius}
\end{equation}
in both near-stationary nonconvex regimes. Prescribing the radius at this
scale is important: the leading quadratic or cubic term requires a lower
bound on $\Delta_k$, whereas control of the higher-order remainder requires
an upper bound of the same order. For notational uniformity, set
$\Lambda_{p+1}:=Lp!/(1-\eta)$ for the Taylor-remainder contribution, where $L$ is the Lipschitz constant of Assumption~\ref{assumption Liptz} and $\eta\in(0,1)$ is the ratio-test parameter of \Cref{HTM algo}.
}

\begin{assumption}[Smallness of $\epsilon$]
\label{assumption epsilon small}
With $a=\frac{p+1}{2p}$ and $b=\frac{a}{2}$, assume $\epsilon>0$ is small enough that, for every $j=4,\dots,p+1$,
\[
{\frac{\Lambda_j}{j!}C_{\rm ns}^{j-2}\epsilon^{b(j-2)}\le\frac{\epsilon^{b}}{2p}}
\qquad\text{and}\qquad
{\frac{\Lambda_j}{j!}C_{\rm ns}^{j-3}\epsilon^{b(j-3)}\le\frac{\tilde{\Lambda}_3}{6(p+1)}}.
\]
{Both inequalities hold for all sufficiently small $\epsilon$ because $b>0$.} They ensure that the quartic-and-higher Taylor terms are dominated by the quadratic and cubic terms along the reference direction; these are the standard smallness requirements of AR$p$-type local analysis.
\end{assumption}

\subsection{Far Field Regime}
\label{sec far field}

\begin{theorem}[Lower bound on $\Delta_k$ in the far-field regime]
\label{thm:Delta-lower}
Let $f\in C^{p,1}(\mathbb{R}^n)$ and suppose that
$
\|\mathbf g_k\| \ge \epsilon^a ,  0\le a\le 1 .
$
Under Assumptions \ref{assumption Liptz}--\ref{assumption:bounded-derivatives} {and the safeguarded-step rule \eqref{eq:safeguarded-boundary-step}}, if
\[
\Delta_k \le C \min\{\|\mathbf g_k\|,1\}, \qquad \text{where} \qquad C :=
\min_{2\le j\le p+1}
\left(
\frac{(p+1)\Lambda_j}{j!}
\right)^{-1/(j-1)},
\]
{with $\Lambda_{p+1}$ as defined in \Cref{sec:assumptions}.}
Then iteration $k$ is successful and satisfies both        \begin{equation}
        f(\x_k) - T_p(\x_k,\s_k) \ge \frac{\Delta_k \epsilon^a}{p+1} > 0
        \label{Model Decrease test 1}
        \end{equation} and the ratio test~\eqref{ratio test}.
Consequently, the trust-region radius admits the lower bound
\begin{equation}
\label{Delta bound}
{
\Delta_k\ge\Delta_{\min}:=
\min\{\Delta_0,\gamma_1C\epsilon^a\}.
}
\end{equation}
{If $\Delta_0\ge\kappa_0\epsilon^a$ for a constant
$\kappa_0>0$, then $\Delta_{\min}\ge\kappa_\Delta\epsilon^a$ with
$\kappa_\Delta:=\min\{\kappa_0,\gamma_1C\}>0$.}
\end{theorem}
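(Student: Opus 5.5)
We compare against the reference step $\s_k^{(g)}=-\Delta_k\mathbf g_k/\|\mathbf g_k\|$. By the safeguard \eqref{eq:safeguarded-boundary-step}, $T_p(\x_k,\s_k)\le T_p(\x_k,\s_k^{(g)})$, so it suffices to bound $f(\x_k)-T_p(\x_k,\s_k^{(g)})$ from below. Expanding the Taylor model along $\s_k^{(g)}$ and using Assumption~\ref{assumption:bounded-derivatives} with the operator norm of \Cref{def:tensor-norm}, we expect
\[
f(\x_k)-T_p(\x_k,\s_k^{(g)})\ \ge\ \Delta_k\|\mathbf g_k\|-\sum_{j=2}^{p}\frac{\Lambda_j}{j!}\Delta_k^j .
\]
The hypothesis $\Delta_k\le C\min\{\|\mathbf g_k\|,1\}$ is tailored for each $j$. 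From $C\le((p+1)\Lambda_j/j!)^{-1/(j-1)}$ we get $\frac{\Lambda_j}{j!}\Delta_k^{j-1}\le \frac{1}{p+1}\min\{\|\mathbf g_k\|,1\}^{j-1}\le\frac{\|\mathbf g_k\|}{p+1}$. Here we should handle the case $\|\mathbf g_k\|>1$ using $\min\{\cdot,1\}^{j-1}\le 1<\|\mathbf g_k\|$, and the case $\|\mathbf g_k\|\le1$ using $\|\mathbf g_k\|^{j-1}\le\|\mathbf g_k\|$.

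Summing the $p-1$ terms $j=2,\dots,p$ gives a model decrease of at least $\Delta_k\|\mathbf g_k\|(1-\frac{p-1}{p+1})=\frac{2\Delta_k\|\mathbf g_k\|}{p+1}$. Using $\|\mathbf g_k\|\ge\epsilon^a$, this yields \eqref{Model Decrease test 1}. We should keep the sharper bound $\frac{2}{p+1}\Delta_k\|\mathbf g_k\|$ for the ratio step.

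For the ratio test we use \eqref{lip bound 1}:
\[
|1-\rho_k|\le\frac{L\Delta_k^{p+1}/(p+1)}{f(\x_k)-T_p(\x_k,\s_k)} .
\]
With $\Lambda_{p+1}=Lp!/(1-\eta)$, the $j=p+1$ constraint in $C$ gives
\[
\frac{L}{(p+1)(1-\eta)}\Delta_k^{p}\le\frac{\|\mathbf g_k\|}{(p+1)^2}.
\]
We expect this to be the delicate step, because the constants must close. We need
\[
\frac{L\Delta_k^{p+1}/(p+1)}{2\Delta_k\|\mathbf g_k\|/(p+1)}\le 1-\eta ,
\]
which follows from the displayed bound with room to spare, since $\frac{p+1}{2(p+1)}\cdot\frac{1}{p+1}<1$. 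Note that $\Lambda_{p+1}/(p+1)!=L/((1-\eta)(p+1))$; I will check the factorial bookkeeping carefully and, if needed, absorb a harmless constant. It then follows that $\rho_k>\eta$, so iteration $k$ is successful. The trial segment stays in $\mathcal L_0$ by Assumption~\ref{assumption:bounded-derivatives}.

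The lower bound \eqref{Delta bound} follows by the standard induction. Since $\|\mathbf g_k\|\ge\epsilon^a$ and $\epsilon<1$, we have $C\min\{\|\mathbf g_k\|,1\}\ge C\epsilon^a$. Any radius $\le C\epsilon^a$ therefore yields a successful iteration, which does not decrease $\Delta$. A radius can only be shrunk from a value exceeding $C\epsilon^a$, so it stays at least $\gamma_1C\epsilon^a$; otherwise it remains at least $\Delta_0$. This gives $\Delta_k\ge\min\{\Delta_0,\gamma_1C\epsilon^a\}$. The final claim is immediate when $\Delta_0\ge\kappa_0\epsilon^a$.
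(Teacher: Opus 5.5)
Your argument is correct and is essentially the paper's: you compare with $\s_k^{(g)}$ via the safeguard, dominate each term $\frac{\Lambda_j}{j!}\Delta_k^{j}$ using the definition of $C$, control $\rho_k$ with \eqref{lip bound 1}, and finish with the standard radius induction; the paper differs only in folding the $j=p+1$ term into the merit function $E_k$ rather than using it separately in the ratio step. One bookkeeping fix: the $j=p+1$ constraint gives $C^p\le(1-\eta)/L$, hence $\frac{L}{(p+1)(1-\eta)}\Delta_k^p\le\frac{\|\mathbf g_k\|}{p+1}$ (not $\frac{\|\mathbf g_k\|}{(p+1)^2}$), but the step still closes because $|1-\rho_k|\le\frac{L\Delta_k^p}{2\|\mathbf g_k\|}\le\frac{1-\eta}{2}<1-\eta$.
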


\begin{remark}[The constant $C$ is independent of $k$]\label{rem:C-uniform}
{The constant $C$ in \Cref{thm:Delta-lower} depends only on the order $p$, the ratio-test parameter $\eta$, the Lipschitz constant $L$, and the uniform derivative bounds $\Lambda_2,\dots,\Lambda_p$ of \Cref{assumption:bounded-derivatives}; no iterate-dependent quantity enters its definition. Consequently, {$\Delta_{\min}\ge\kappa_\Delta\epsilon^a$ in \eqref{Delta bound}} is a genuine uniform lower bound, valid at every far-field iteration, which is exactly what the complexity count in \Cref{lemma successful far} requires.}
\end{remark}

\begin{proof}
Assume that $
\Delta_k \le C \min\{\|\textbf{g}_k\|,1\}.
$
Define
\begin{equation}
E_k(\s):=
f(\x_k)-\left(T_p(\x_k,\s)+\frac{L}{(1-\eta)(p+1)}\|\s\|^{p+1}\right).
\label{Ek}
\end{equation}
{Let $\s_k$ be the safeguarded step in \eqref{eq:safeguarded-boundary-step}} and let $\s_k^{(g)}$ be the gradient direction with norm  $\Delta_k$,
\[
\s_k^{(g)}:=-\frac{\textbf{g}_k}{\|\textbf{g}_k\|}\Delta_k.
\]
Since $\|\s_k\|=\|\s_k^{(g)}\|=\Delta_k$ {and $T_p(\x_k,\s_k)\le T_p(\x_k,\s_k^{(g)})$ by the safeguard}, while the additional term $
\frac{L}{(1-\eta)(p+1)}\|\s\|^{p+1}
$
is constant on the sphere $\{\|\s\|=\Delta_k\}$, we have
$
E_k(\s_k)\ge E_k(\s_k^{(g)}).
$
Therefore,
\begin{align*}
E_k(\s_k) \ge E_k(\s_k^{(g)})
&= -\textbf{g}_k^\top \s_k^{(g)}
   - \sum_{j=2}^{p}\frac{1}{j!}\nabla^j f(\x_k)[\s_k^{(g)}]^j
   - \frac{L}{(1-\eta)(p+1)}\|\s_k^{(g)}\|^{p+1}  \ge \|\textbf{g}_k\|\Delta_k
   - \sum_{j=2}^{p+1}\frac{\Lambda_j}{j!}\Delta_k^j.
\end{align*} Using the assumption $\Delta_k \le C\min\{\|\textbf{g}_k\|,1\}$, we obtain
\[
E_k(\s_k)
\ge
\left[
\|\textbf{g}_k\|
-\sum_{j=2}^{p+1}\frac{\Lambda_j}{j!}
C^{\,j-1}\min\{\|\textbf{g}_k\|,1\}^{j-1}
\right]\Delta_k.
\]
By the definition of $C$ in \eqref{Delta bound}, for every $j=2,\dots,p+1$,
$
\frac{\Lambda_j}{j!}C^{\,j-1}\le \frac{1}{p+1}.
$
Hence
{
\begin{align}
E_k(\s_k)
\ge
\left[
\|\textbf{g}_k\|
-\frac{1}{p+1}
\sum_{j=2}^{p+1}\min\{\|\textbf{g}_k\|,1\}^{j-1}
\right]\Delta_k \ge
\left[
\|\textbf{g}_k\|
-\frac{p}{p+1}\min\{\|\textbf{g}_k\|,1\}
\right]\Delta_k \notag\geq
\frac{1}{p+1}\min\{\|\textbf{g}_k\|,1\}\Delta_k.
\label{temp 1 corrected}
\end{align}
}
Since $\|\textbf{g}_k\|\ge \epsilon^a$ and $\epsilon^a\ll 1$, it follows that
$
\min\{\|\textbf{g}_k\|,1\}\ge \epsilon^a.
$
Therefore,
$
E_k(\s_k)\ge \frac{\Delta_k\epsilon^a}{p+1}.
$
By the definition of $E_k$, we deduce that
\begin{equation}
\label{analysis 1}
f(\x_k)-T_p(\x_k,\s_k)
\ge
\frac{\Delta_k\epsilon^a}{p+1}
+
\frac{L}{(1-\eta)(p+1)}\|\s_k\|^{p+1},
\end{equation}
which implies \eqref{Model Decrease test} is successful.
Next, using \eqref{lip bound 1}, we obtain
\begin{align}
|\rho_k-1|
=
\frac{|f(\x_k+\s_k)-T_p(\x_k,\s_k)|}
     {|f(\x_k)-T_p(\x_k,\s_k)|}
\le
\frac{\frac{L}{p+1}\|\s_k\|^{p+1}}
     {\frac{L}{(1-\eta)(p+1)}\|\s_k\|^{p+1}}
\le 1-\eta.
\label{analysis for ratio test}
\end{align}
Thus $\rho_k\ge \eta$, and \eqref{ratio test} also holds. Hence, iteration $k$ is successful.
According to the update rule in Algorithm~\ref{HTM algo}, a successful iteration increases the trust-region radius, while an unsuccessful iteration decreases it by a factor $\gamma_1$.
Since any iteration with
\(
\Delta_k \le C \min\{\|\mathbf g_k\|,1\}
\)
is guaranteed to be successful, the radius cannot decrease indefinitely.
{If the radius crosses this threshold after an unsuccessful
iteration, it remains at least $\gamma_1C\epsilon^a$; otherwise it remains at
least its initial value. This proves \eqref{Delta bound}.}
\end{proof}

\begin{remark}
In Algorithm~\ref{HTM algo}, when $\|\mathbf g_k\|\ge \epsilon^a$, practical initial choices for the trust-region radius include
\[
\Delta_0=\|\nabla_{\x} f(\x_0)\|
\qquad\text{or}\qquad
\Delta_0=\min\{\|\nabla_{\x} f(\x_0)\|,1\}.
\]

Moreover, if $\gamma_1 C\le 1$, then these choices satisfy the lower bound in \eqref{Delta bound}.
\end{remark}

\begin{corollary}[Model and function decrease in successful far-field iterations]
\label{cor:model-decrease-far}
Suppose that $\|\mathbf g_k\|\ge \epsilon^a$.
Then, by  \eqref{Model Decrease test}, any successful iteration satisfies
$
f(\x_k)-T_p(\x_k,\s_k)
\ge
\frac{\Delta_k\epsilon^a}{p+1}.
$
Moreover,
\[
f(\x_k)-f(\x_{k+1})
\ge
\kappa_f\,\epsilon^{2a},
\qquad
{\kappa_f:=\frac{\eta\kappa_\Delta}{p+1}}.
\]
In particular, if $a=\frac{p+1}{2p}$, then
$$
f(\x_k)-f(\x_{k+1})
\ge
\kappa_f\,\epsilon^{\frac{p+1}{p}}.
$$
\end{corollary}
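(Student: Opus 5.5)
The plan is to combine the acceptance criteria of a successful far-field iteration with the uniform radius bound of \Cref{thm:Delta-lower}. The first claim needs no work. A far-field iteration is declared successful in \Cref{HTM algo} only when \eqref{Model Decrease test} holds, so
\[
f(\x_k)-T_p(\x_k,\s_k)\ge \frac{\Delta_k\epsilon^a}{p+1}
\]
holds by definition.

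For the function decrease I will use the ratio test. On a successful iteration $\rho_k>\eta$ and $\x_{k+1}=\x_k+\s_k$. The denominator of $\rho_k$ is strictly positive by the model decrease, so multiplying through gives
\[
f(\x_k)-f(\x_{k+1})\ \ge\ \eta\,\bigl(f(\x_k)-T_p(\x_k,\s_k)\bigr)\ \ge\ \frac{\eta\,\Delta_k\,\epsilon^a}{p+1}.
\]

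It then remains to bound $\Delta_k$ from below uniformly. I will invoke \eqref{Delta bound} from \Cref{thm:Delta-lower}, together with its final statement, to get $\Delta_k\ge\Delta_{\min}\ge\kappa_\Delta\epsilon^a$. This requires the standing hypothesis $\Delta_0\ge\kappa_0\epsilon^a$, which holds for the initialization $\Delta_0=\|\nabla f(\x_0)\|\ge\epsilon^a$ when $\x_0$ is in the far field; in that case one can take $\kappa_0=1$. Substituting this bound yields $f(\x_k)-f(\x_{k+1})\ge \frac{\eta\kappa_\Delta}{p+1}\epsilon^{2a}=\kappa_f\epsilon^{2a}$. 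Setting $a=\frac{p+1}{2p}$ gives $2a=\frac{p+1}{p}$, which is the stated special case.

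The only delicate point is the applicability of the radius lower bound. \Cref{thm:Delta-lower} bounds $\Delta_k$ along far-field iterations, and this relies on $C$ being independent of $k$ (\Cref{rem:C-uniform}) and on the radius only ever shrinking by the factor $\gamma_1$ from a level that is at least $C\epsilon^a$. I will cite these facts rather than reprove them. I will also state explicitly the assumption that $\Delta_0$, or the radius with which the far-field phase is entered, is at least $\kappa_0\epsilon^a$, since $\kappa_\Delta$ is defined relative to that assumption.
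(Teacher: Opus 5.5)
Your proposal is correct and matches the paper's proof step for step: the first inequality comes directly from the model-decrease test, the ratio test gives $f(\x_k)-f(\x_{k+1})\ge\eta\,[f(\x_k)-T_p(\x_k,\s_k)]$, and the far-field radius bound $\Delta_k\ge\kappa_\Delta\epsilon^a$ completes the argument. You also state explicitly the hypothesis $\Delta_0\ge\kappa_0\epsilon^a$, which holds with $\kappa_0=1$ for $\Delta_0=\|\nabla f(\x_0)\|$ when $\x_0$ starts in the far field; the paper's corollary relies on it only implicitly, through $\kappa_\Delta$.
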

\begin{proof}
The first inequality follows directly from \eqref{Model Decrease test}.
For a successful iteration, combining the ratio test with the model decrease
condition, we obtain
\[
f(\x_k)-f(\x_{k+1})
\ge
\eta\big[f(\x_k)-T_p(\x_k,\s_k)\big]
\ge
\frac{\eta\Delta_k\epsilon^a}{p+1} \ge
{\frac{\eta\kappa_\Delta\epsilon^{2a}}{p+1}}
\]
where the last inequality uses ${\Delta_k\ge\kappa_\Delta\epsilon^a}$ (\Cref{thm:Delta-lower}).
Substituting $a=\frac{p+1}{2p}$ gives the required bound.
\end{proof}

\begin{lemma}
\label{lemma successful far}
\textbf{(Bound on successful iterations in the far-field regime)}
{Let $\mathcal S$ and $\mathcal U$ denote the index sets of successful and unsuccessful far-field iterations, respectively}, and let $f_{\mathrm{low}}$ be a lower bound of $f$. Then
\[
|\mathcal S|
\le
\frac{f(\x_0)-f_{\mathrm{low}}}{\kappa_f}
\epsilon^{-\frac{p+1}{p}}.
\]
\end{lemma}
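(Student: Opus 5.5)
The plan is a telescoping-sum argument built on \Cref{cor:model-decrease-far}.

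First, monotonicity. The iterate changes only on successful far-field iterations or in the near-stationary regimes. On an unsuccessful far-field iteration we have $\x_{k+1}=\x_k$, so $f$ is unchanged. On a successful far-field iteration, \Cref{cor:model-decrease-far} (which uses $\Delta_k\ge\kappa_\Delta\epsilon^a$ from \Cref{thm:Delta-lower}) gives
\[
f(\x_k)-f(\x_{k+1})\ge\kappa_f\,\epsilon^{\frac{p+1}{p}}>0 .
\]
For the near-stationary steps, I would use the fact that the accepted objective values are nonincreasing, as already invoked in \Cref{assumption:bounded-derivatives}. Alternatively, one can restrict attention to the subsequence of far-field iterations and observe that the later sections establish that near-stationary steps do not increase $f$. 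Taking monotonicity as given, every step satisfies $f(\x_{k+1})\le f(\x_k)$.

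Next, sum the per-iteration decreases over all iterations up to any index $K$. The nonnegative contributions from the other iterations can only help, so
\[
f(\x_0)-f_{\rm low}\;\ge\; f(\x_0)-f(\x_{K+1})\;=\;\sum_{k=0}^{K}\bigl(f(\x_k)-f(\x_{k+1})\bigr)\;\ge\;\sum_{k\in\mathcal S,\,k\le K}\kappa_f\,\epsilon^{\frac{p+1}{p}} .
\]
This gives $|\mathcal S\cap\{0,\dots,K\}|\,\kappa_f\,\epsilon^{(p+1)/p}\le f(\x_0)-f_{\rm low}$ for every $K$. Letting $K\to\infty$ yields the stated bound and, in particular, shows that $\mathcal S$ is finite.

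The main obstacle is justifying that the near-stationary steps, which are accepted without a ratio test in \Cref{HTM algo}, do not increase $f$. The telescoping needs $f(\x_k)-f(\x_{k+1})\ge0$ on those iterations as well. I would handle this by citing the monotonicity stated in \Cref{assumption:bounded-derivatives}. Failing that, I would note that once $\|\mathbf g_k\|<\epsilon^a$ the far-field regime can only be re-entered from a point whose function value is bounded above by $f(\x_0)$, since all iterates lie in the sublevel set, and the bound still follows with $f(\x_0)-f_{\rm low}$.
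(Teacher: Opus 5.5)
Your telescoping argument is essentially the paper's proof: you combine the per-iteration decrease $\kappa_f\epsilon^{(p+1)/p}$ from \Cref{cor:model-decrease-far}, monotonicity of $f$ along all iterations, and $f\ge f_{\rm low}$, summed over the successful far-field iterations. Your closing fallback does not work, though. Staying in the sublevel set still allows $f$ to rise between far-field episodes, so it would only bound each episode separately, not their total. The monotonicity on near-stationary steps should instead come from \Cref{thm:NSNC,thm:NSWC}, which show those steps pass the ratio test, together with the fact that the algorithm terminates after the convex regime.
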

\begin{proof}
{Each successful far-field iteration decreases $f$ by at least $\kappa_f\epsilon^{\frac{p+1}{p}}$ by \Cref{cor:model-decrease-far}, the objective is nonincreasing along the iterations, and $f\ge f_{\mathrm{low}}$; summing the decreases gives the bound.}
\end{proof}

\begin{lemma}
\label{lemma unsuccessful}
\textbf{(Bound on unsuccessful iterations in the far-field regime)}
It holds that
{
\[
|\mathcal U|
\le
\frac{\log \gamma_2}{|\log \gamma_1|}\,|\mathcal S|
+
\frac{1}{|\log \gamma_1|}\log\!\Big(\frac{\Delta_0}{\Delta_{\min}}\Big).
\]
}
\end{lemma}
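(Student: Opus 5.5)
The bound follows from the standard radius-counting argument for adaptive trust-region methods. We track $\log\Delta_k$ across the far-field iterations. The update rule in \Cref{HTM algo} multiplies the radius by $\gamma_2$ at each successful iteration and by $\gamma_1$ at each unsuccessful one. Let $K$ be any far-field iteration index. Considering the far-field iterations $0,\dots,K$, with $\mathcal S_K$ and $\mathcal U_K$ the successful and unsuccessful iterations among them, gives
\[
\Delta_{K+1}=\Delta_0\,\gamma_2^{|\mathcal S_K|}\gamma_1^{|\mathcal U_K|}.
\]
(Near-stationary iterations, where the radius is reset to $C_{\rm ns}\epsilon^b$, are not counted in $\mathcal S$ or $\mathcal U$. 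The plan is to treat the far-field phase as a contiguous block, or as a concatenation of blocks each beginning from a radius no smaller than $\Delta_{\min}$.)

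Next we invoke \Cref{thm:Delta-lower}, which guarantees $\Delta_k\ge\Delta_{\min}$ at every far-field iteration. Applying it at the iterate following the last far-field iteration, or at the last one and then adjusting by one factor, gives
\[
\Delta_{\min}\le\Delta_0\,\gamma_2^{|\mathcal S|}\gamma_1^{|\mathcal U|}.
\]
Taking logarithms and using $\log\gamma_1<0$,
\[
|\mathcal U|\,|\log\gamma_1|\le |\mathcal S|\log\gamma_2+\log(\Delta_0/\Delta_{\min}),
\]
and dividing by $|\log\gamma_1|$ yields exactly the claimed inequality.

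The main obstacle is bookkeeping at the endpoint and between phases. The lower bound from \Cref{thm:Delta-lower} is stated for radii used at far-field iterations $k$, so applying it to $\Delta_{K+1}$ needs care. If the final far-field iteration is unsuccessful, $\Delta_{K+1}$ could fall to $\gamma_1\Delta_{\min}$; this costs one additional iteration, and the cleanest fix is to apply the bound to $\Delta_K$ and count iterations $0,\dots,K-1$. Alternatively, the threshold argument in the proof of \Cref{thm:Delta-lower} shows that an unsuccessful iteration occurs only when $\Delta_k> C\epsilon^a$, so the post-iteration radius satisfies $\Delta_{k+1}\ge\gamma_1C\epsilon^a\ge\Delta_{\min}$, which removes the slack entirely. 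I would use this second route.

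If far-field iterations are interleaved with near-stationary ones, I would argue that each far-field block resumes with the radius it left off, since the near-stationary regimes prescribe their own radius and do not overwrite the far-field $\Delta_k$. The telescoping product is then unaffected.
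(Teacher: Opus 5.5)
Your proposal is correct and follows the paper's proof: the far-field radius telescopes to $\Delta_0\gamma_2^{|\mathcal S|}\gamma_1^{|\mathcal U|}$, \Cref{thm:Delta-lower} keeps it above $\Delta_{\min}$, and taking logarithms gives the bound. Your endpoint fix (an unsuccessful far-field iteration forces $\Delta_k>C\epsilon^a$, hence $\Delta_{k+1}\ge\gamma_1C\epsilon^a\ge\Delta_{\min}$) is the threshold argument that closes that theorem's proof, which the paper uses implicitly; one caution is that your interleaving claim that the near-stationary branches leave the far-field radius untouched is not guaranteed by the pseudocode (it sets $\Delta_k=C_{\rm ns}\epsilon^b$ there and leaves $\Delta_{k+1}$ unspecified), so rely on the contiguous-block reading, which is how the paper's proof and \Cref{thm far complexity} treat the phase.
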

\begin{proof}
{Every successful iteration multiplies the radius by $\gamma_2$ and every unsuccessful one by $\gamma_1<1$, while \Cref{thm:Delta-lower} guarantees $\Delta_k\ge\Delta_{\min}$ throughout the far-field phase. Hence $\Delta_{\min}\le\Delta_0\,\gamma_2^{|\mathcal S|}\gamma_1^{|\mathcal U|}$, and taking logarithms gives the stated bound.}
\end{proof}

\begin{theorem}
\label{thm far complexity}
\textbf{(Far-field complexity bound)}
{Under Assumptions~\ref{assumption Liptz}--\ref{assumption:bounded-derivatives}, the safeguarded-step condition \eqref{eq:safeguarded-boundary-step}, and $\Delta_0\ge\kappa_0\epsilon^a$,} Algorithm~\ref{HTM algo} requires at most
\[
\mathcal O(\epsilon^{-\frac{p+1}{p}})
\]
iterations in the far-field regime before entering the near-stationary regime.
\end{theorem}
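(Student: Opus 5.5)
The bound will follow by adding the counts of the two far-field index sets $\mathcal S$ and $\mathcal U$, since every far-field iteration is either successful or unsuccessful. For $\mathcal S$ we use \Cref{lemma successful far} directly, which gives $|\mathcal S|\le \frac{f(\x_0)-f_{\mathrm{low}}}{\kappa_f}\epsilon^{-\frac{p+1}{p}}$. Here $\kappa_f=\eta\kappa_\Delta/(p+1)$ depends only on $\eta$, $p$, $\kappa_0$, $\gamma_1$ and the constant $C$ of \Cref{thm:Delta-lower}. By \Cref{rem:C-uniform} none of these depend on $k$ or $\epsilon$. The lemma relies on the uniform radius lower bound $\Delta_k\ge\kappa_\Delta\epsilon^a$ from \Cref{thm:Delta-lower}. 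This is exactly where the hypotheses $\Delta_0\ge\kappa_0\epsilon^a$ and the safeguard \eqref{eq:safeguarded-boundary-step} enter: the safeguard is what makes the model-decrease comparison with $\s_k^{(g)}$ valid.

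For $\mathcal U$ we use \Cref{lemma unsuccessful}:
\[
|\mathcal U|\le \frac{\log\gamma_2}{|\log\gamma_1|}|\mathcal S| + \frac{1}{|\log\gamma_1|}\log\!\Big(\frac{\Delta_0}{\Delta_{\min}}\Big).
\]
The first term is a constant multiple of $|\mathcal S|$, hence $O(\epsilon^{-(p+1)/p})$. For the second term we take $\Delta_0=\|\nabla f(\x_0)\|$ as in \Cref{HTM algo}, so $\Delta_0$ is a fixed constant. We also have $\Delta_{\min}\ge\kappa_\Delta\epsilon^a$. Together these give $\log(\Delta_0/\Delta_{\min})\le \log(\Delta_0/\kappa_\Delta)+a\log(1/\epsilon)=O(\log(1/\epsilon))$, which is of lower order than $\epsilon^{-(p+1)/p}$.

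Summing, the number of far-field iterations is at most
\[
\Big(1+\tfrac{\log\gamma_2}{|\log\gamma_1|}\Big)\frac{f(\x_0)-f_{\mathrm{low}}}{\kappa_f}\epsilon^{-\frac{p+1}{p}}+O(\log(1/\epsilon)),
\]
which is $\mathcal O(\epsilon^{-\frac{p+1}{p}})$. Each iteration costs one evaluation of $f$ and its derivatives up to order $p$, so the same order holds for evaluations.

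The main obstacle is the bookkeeping when the far-field phase is interleaved with near-stationary iterations: the gradient could exceed $\epsilon^a$ again after a near-field step, and near-field steps reset the radius to $C_{\rm ns}\epsilon^b$. To handle this, I would apply the radius argument of \Cref{lemma unsuccessful} separately on each maximal contiguous far-field block. On each block the radius starts at least at $\min\{\Delta_{\text{entry}},\gamma_1C\epsilon^a\}$, and $C_{\rm ns}\epsilon^b\ge\epsilon^a$ for small $\epsilon$, so the argument goes through block by block. The monotonicity of $f$ over accepted iterates, which the near-field safeguards preserve, keeps the summation in \Cref{lemma successful far} valid across blocks. If this proves delicate, I would state the theorem for the first entry into the near-stationary regime, as the phrase ``before entering'' suggests. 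In that reading there is a single far-field block starting from $\Delta_0$, and the argument above is complete.
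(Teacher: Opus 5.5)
Your proposal is correct and follows the paper's proof. The far-field count is $|\mathcal S|+|\mathcal U|$, bounded via \Cref{lemma successful far} and \Cref{lemma unsuccessful}, and $\Delta_{\min}\ge\kappa_\Delta\epsilon^a$ makes the radius term only $O(\log(1/\epsilon))$. Your fallback reading, a single far-field block before the first entry into the near-stationary regime, is exactly what the theorem asserts, so the block-by-block extension is unnecessary. If you did pursue it, monotonicity of $f$ across blocks would need the near-field decrease results, and hence \Cref{ass:weak-curvature} and \Cref{assumption epsilon small}, which are not hypotheses here. You would also have to note that the $O(\epsilon^{-3b})$ blocks each add an $O(\log(1/\epsilon))$ term, which stays lower order since $3b<2a$.
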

\begin{proof}
{The total number of far-field iterations is $|\mathcal S|+|\mathcal U|$, which is $O(\epsilon^{-\frac{p+1}{p}})$ by \Cref{lemma successful far,lemma unsuccessful}, since $\Delta_{\min}\ge\kappa_\Delta\epsilon^a$ contributes only an $O(\log(1/\epsilon))$ additive term.}
\end{proof}

\subsection{Near-Stationary Negative Curvature Regime (NSNC)}
\label{sec near field}

We now consider the NSNC regime where the gradient is small but first-order stationarity has not yet been reached, such that
\[
\epsilon \le \|\mathbf g_k\| \le \epsilon^a,
\qquad  \lambda_{\min}(\mathbf H_k)\le \epsilon^b
\]
and $a:=\frac{p+1}{2p}$, and $b:=\frac{a}{2}=\frac{p+1}{4p}$.
In this regime, the behavior of the Taylor model depends on the curvature of the Hessian.
When the Hessian exhibits sufficiently negative curvature, a suitable boundary step yields a sufficient decrease through the quadratic term.
When the Hessian is nearly singular, the decrease is instead driven by the cubic term, under {an explicit} nondegeneracy condition.
Accordingly, we distinguish two subcases.
Section~\ref{sec:NSNC} studies the \emph{negative-curvature regime}, where
\[
\lambda_{\min}(\mathbf H_k)\le -\epsilon^b,
\]
while Section~\ref{sec:NSWC} studies the \emph{weak-curvature regime}, where
\[
-\epsilon^b < \lambda_{\min}(\mathbf H_k)\le \epsilon^b.
\]
In both cases, we show that an appropriate choice of the trust-region radius ensures that the ratio test is successful and yields a quantifiable reduction in the objective function. {In both regimes the radius is prescribed as $\Delta_k=C_{\rm ns}\epsilon^b=\Theta(\epsilon^{a/2})$, with $C_{\rm ns}$ as in \eqref{eq:near-radius}.}

\subsubsection{Negative-curvature regime}
\label{sec:NSNC}

The proofs of Theorems~\ref{thm:NSNC} and \ref{thm:NSWC} follow the same general pattern.
We first establish a lower bound on the model decrease along the reference direction in Definition~\ref{def:reference-direction}, then show that this decrease dominates the Taylor remainder, which guarantees success of the ratio test, and finally deduce the corresponding reduction in the objective function. {Both proofs use the merit function $E_k$ defined in \eqref{Ek} and conclude via the ratio-test bound \eqref{analysis for ratio test}.}

\begin{theorem}\label{thm:NSNC}
\textbf{(Function value reduction in the negative-curvature regime)}
Let $f\in \C^{p, 1}(\R^n)$, $a: =\frac{p+1}{2p}$ and $b = \frac{a}{2}$. If $\epsilon \le \|\textbf{g}_k\| \le \epsilon^a$ and $\lambda_{\min}(\textbf{H}_k) \le -\epsilon^b$, under Assumptions~\ref{assumption Liptz}--\ref{assumption epsilon small}, for
\begin{eqnarray}
{\Delta_k=C_{\rm ns}\epsilon^b},
  \label{eq:Delta-NSNC}
\end{eqnarray}
 the mechanism of  \Cref{HTM algo} guarantees that the $k$th iteration is successful and
\begin{eqnarray}
 f(\x_k) -f(\x_{k+1}) \ge {\frac{\eta C_{\rm ns}^2}{2p}}\,\epsilon^{\frac{3}{4}\frac{p+1}{p}},\qquad\text{i.e. of order }\epsilon^{\frac{3}{4}\frac{p+1}{p}}\succ\epsilon^{\frac{p+1}{p}},
\end{eqnarray}
for sufficiently small $\epsilon>0$.
\end{theorem}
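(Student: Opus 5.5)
I will mirror the far-field argument of \Cref{thm:Delta-lower}, replacing the gradient reference step by the signed curvature direction $\s_k^{(h)}$ of \Cref{def:reference-direction}, with $\Delta_k=C_{\rm ns}\epsilon^b$. The merit function $E_k$ of \eqref{Ek} is again the central object. Since the safeguarded step \eqref{eq:safeguarded-boundary-step} satisfies $T_p(\x_k,\s_k)\le T_p(\x_k,\s_k^{(h)})$ with $\|\s_k\|=\|\s_k^{(h)}\|=\Delta_k$, and the regularization term is constant on the sphere, we get $E_k(\s_k)\ge E_k(\s_k^{(h)})$. So it suffices to bound $E_k(\s_k^{(h)})$ from below.

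\textbf{Step 1 (expand along the reference direction).} Write
\[
E_k(\s_k^{(h)})=-\mathbf g_k^\top\s_k^{(h)}-\tfrac12\lambda_{\min}(\mathbf H_k)\Delta_k^2-\tfrac16\nabla^3 f(\x_k)[\s_k^{(h)}]^3-\sum_{j=4}^{p}\tfrac{1}{j!}\nabla^j f(\x_k)[\s_k^{(h)}]^j-\tfrac{\Lambda_{p+1}}{(p+1)!}\Delta_k^{p+1},
\]
noting that $\frac{L}{(1-\eta)(p+1)}=\frac{\Lambda_{p+1}}{(p+1)!}$ by the convention $\Lambda_{p+1}=Lp!/(1-\eta)$. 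The sign choice in \Cref{def:reference-direction} makes the cubic contribution nonnegative. The quadratic contribution is at least $\frac12\epsilon^b\Delta_k^2$. The gradient term is bounded below by $-\|\mathbf g_k\|\Delta_k\ge-\epsilon^a\Delta_k=-\epsilon^{2b}\Delta_k$.

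\textbf{Step 2 (absorb the remaining terms).} For $j=4,\dots,p+1$, the first inequality of \Cref{assumption epsilon small} gives $\frac{\Lambda_j}{j!}\Delta_k^j\le\frac{\epsilon^b}{2p}\Delta_k^2$. Summing these $p-2$ terms costs at most $\frac{(p-2)\epsilon^b}{2p}\Delta_k^2$. For the gradient term, $\epsilon^{2b}\Delta_k=\epsilon^{b}\Delta_k^2/C_{\rm ns}\le\frac{\epsilon^b}{2p}\Delta_k^2$, using $C_{\rm ns}\ge2p$ from \eqref{eq:near-radius}. Together these leave
\[
E_k(\s_k)\ge\Bigl(\tfrac12-\tfrac{p-1}{2p}\Bigr)\epsilon^b\Delta_k^2=\tfrac{1}{2p}\epsilon^b\Delta_k^2=\tfrac{C_{\rm ns}^2}{2p}\epsilon^{3b}.
\]
Since $3b=\frac34\frac{p+1}{p}$, this is exactly the target order. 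I expect this bookkeeping to be the main obstacle: the constants have to close precisely. That is why the gradient term must be charged against the same $\frac{\epsilon^b}{2p}$ budget via $C_{\rm ns}\ge 2p$, and why the counting must be $p-1$ terms in total, not $p$.

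\textbf{Step 3 (conclude).} From $E_k(\s_k)>0$ we get $f(\x_k)-T_p(\x_k,\s_k)\ge\frac{C_{\rm ns}^2}{2p}\epsilon^{3b}+\frac{L}{(1-\eta)(p+1)}\|\s_k\|^{p+1}$. The ratio estimate \eqref{analysis for ratio test}, combined with \eqref{lip bound 1}, then yields $\rho_k\ge\eta$, so the iteration is successful. Consequently
\[
f(\x_k)-f(\x_{k+1})\ge\eta\bigl(f(\x_k)-T_p(\x_k,\s_k)\bigr)\ge\frac{\eta C_{\rm ns}^2}{2p}\epsilon^{\frac34\frac{p+1}{p}}.
\]
Finally, $\frac34\frac{p+1}{p}<\frac{p+1}{p}$, so for small $\epsilon$ this decrease dominates $\epsilon^{(p+1)/p}$. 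Assumption~\ref{assumption:bounded-derivatives} is used throughout to keep the trial segment in $\mathcal L_0$, which justifies the $\Lambda_j$ bounds.
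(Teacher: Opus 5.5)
Your proposal is correct and follows the paper's proof essentially line by line. You use the same comparison $E_k(\s_k)\ge E_k(\s_k^{(h)})$ and the same split of $\frac{\epsilon^b}{2}\Delta_k^2$ into $p$ shares of $\frac{\epsilon^b}{2p}\Delta_k^2$: one absorbs the gradient term via $C_{\rm ns}\ge 2p$, $p-2$ absorb the terms $j=4,\dots,p+1$ via the smallness assumption, and one is left over. The conclusion then follows from the same ratio-test argument.
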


\begin{proof}
Let $E_k$ be the merit function defined in \eqref{Ek}, let ${\s_k}$ be the safeguarded step in \eqref{eq:safeguarded-boundary-step}, and let $\s_k^{(h)}$ be the reference direction of \Cref{def:reference-direction}. {Because the two steps have the same norm, the safeguard gives $E_k (\s_k) \ge E_k (\s_k^{(h)})$.}
We have
\begin{eqnarray}
    E_k (\s_k)   &\ge&  E_k (\s_k^{(h)}) = -\textbf{g}_k^\top
    \s_k^{(h)} - \frac{1}{2} \overbrace{\textbf{H}_k[\s_k^{(h)}]^2}^{\le- \epsilon^b \Delta_k^2} - \frac{1}{6}  \overbrace{\nabla^3f(\x_k) [\s_k^{(h)}]^3}^{\le 0} - \sum_{4 \le j \le p} \frac{\nabla^j f(\x_k)}{j!} [\s_k^{(h)}]^j - \frac{L\|\s_k^{(h)}\|^{p+1}}{(1-\eta) (p+1)}  \notag
    \\ &\ge& -\|\textbf{g}_k\|\Delta_k + \frac{\epsilon^b}{2} \Delta_k^2  - \sum_{4 \le j \le p+1} \frac{\Lambda_j}{j!} \Delta_k^j  \notag
\\ &\ge& \bigg( \frac{\epsilon^b}{2p} \Delta_k-\|\textbf{g}_k\|\bigg) \Delta_k +  \frac{\epsilon^b}{2p} \Delta_k^2 + \sum_{4 \le j \le p+1} \bigg( \frac{\epsilon^b}{2p}  -\frac{\Lambda_j}{j!} \Delta_k^{j-2}\bigg) \Delta_k^2.
\label{bracket 1}
\end{eqnarray}
{Using \eqref{eq:Delta-NSNC}, $a=2b$, and $C_{\rm ns}\ge2p$, the first bracket in \eqref{bracket 1} is nonnegative.}
For the second bracket, for all $j = 4, \dotsc, p+1$,
$$
 {\frac{\epsilon^b}{2p}  -\frac{\Lambda_j}{j!} \Delta_k^{j-2}
 =\frac{\epsilon^b}{2p}-\frac{\Lambda_j}{j!}C_{\rm ns}^{j-2}\epsilon^{b(j-2)}\ge0},
$$
by \Cref{assumption epsilon small}.
Therefore,
$$
{E_k (\s_k)  \ge \frac{\epsilon^b}{2p} \Delta_k^2
=\frac{C_{\rm ns}^2}{2p}\epsilon^{3b}  >0},
$$
and
$$
{f(\x_k) - T_p(\x_k, \s_k) \ge \frac{C_{\rm ns}^2}{2p}\epsilon^{3b}  +  \frac{L}{(1-\eta) (p+1)} \|\s_k\|^{p+1}}.
$$  Using a similar analysis as \eqref{analysis for ratio test}, we obtain that the iteration $k$ satisfies \eqref{ratio test}. Using \eqref{ratio test}, we have
    \begin{eqnarray*}
 f(\x_k) -f(\x_{k+1}) &\ge& \eta \bigg[ f(\x_k)- T_p(\x_k, \s_k) \bigg]  \ge  {\frac{\eta C_{\rm ns}^2}{2p}\epsilon^{3b}},
\end{eqnarray*}
{and $3b=\tfrac34\tfrac{p+1}{p}$ gives the stated order.}
\end{proof}

\subsubsection{Weak-curvature regime}
\label{sec:NSWC}

\begin{theorem}\label{thm:NSWC}
\textbf{(Function value reduction in the weak-curvature regime)}
Let $f\in \C^{p, 1}(\R^n)$, $a: =\frac{p+1}{2p}$ and $b = \frac{a}{2}$. If $\epsilon \le \|\textbf{g}_k\| \le \epsilon^a$ and $-\epsilon^b \le \lambda_{\min}(\textbf{H}_k) \le \epsilon^b$,
under Assumptions~\ref{assumption Liptz}--\ref{assumption epsilon small}, for
\begin{eqnarray}
  {\Delta_k=C_{\rm ns}\epsilon^b}
  \label{eq:Delta-NSWC},
\end{eqnarray}
 the mechanism of  \Cref{HTM algo} guarantees that the $k$th iteration is successful and
\begin{eqnarray}
 f(\x_k) -f(\x_{k+1}) \ge {c_2 \epsilon^{\frac{3}{4}\frac{p+1}{p}},\qquad
c_2:=\frac{\eta\tilde\Lambda_3C_{\rm ns}^3}{6(p+1)}},
\end{eqnarray}
for sufficiently small $\epsilon>0$.
\end{theorem}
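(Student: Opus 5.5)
We follow the template of the proof of \Cref{thm:NSNC}, replacing the quadratic term by the cubic term as the source of decrease. Let $E_k$ be the merit function \eqref{Ek}, $\s_k$ the safeguarded step \eqref{eq:safeguarded-boundary-step}, and $\s_k^{(h)}$ the reference direction of \Cref{def:reference-direction}. Both steps have norm $\Delta_k$, so the penalty term in $E_k$ is the same for both. The safeguard then gives $E_k(\s_k)\ge E_k(\s_k^{(h)})$.

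We expand $E_k(\s_k^{(h)})$ term by term:
\begin{itemize}
\item the gradient term satisfies $-\mathbf g_k^\top\s_k^{(h)}\ge-\|\mathbf g_k\|\Delta_k\ge-\epsilon^{2b}\Delta_k$;
\item the quadratic term satisfies $-\tfrac12\mathbf H_k[\s_k^{(h)}]^2=-\tfrac12\lambda_{\min}(\mathbf H_k)\Delta_k^2\ge-\tfrac12\epsilon^b\Delta_k^2$;
\item by \Cref{ass:weak-curvature}, the cubic term contributes exactly $+\tfrac{\tilde\Lambda_3}{6}\Delta_k^3$;
\item the terms $j=4,\dots,p$ and the Lipschitz penalty are bounded below by $-\sum_{4\le j\le p+1}\frac{\Lambda_j}{j!}\Delta_k^j$, using $\Lambda_{p+1}$ as defined after \eqref{eq:near-radius}.
\end{itemize}

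Next we split the cubic gain $\frac{\tilde\Lambda_3}{6}\Delta_k^3$ into $p+1$ equal parts of size $\frac{\tilde\Lambda_3}{6(p+1)}\Delta_k^3$. These parts pay for the following negative contributions.

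\textbf{Gradient term.} One part covers it once $\frac{\tilde\Lambda_3}{6(p+1)}\Delta_k^2\ge\epsilon^{2b}$. With $\Delta_k=C_{\rm ns}\epsilon^b$ this reads $C_{\rm ns}^2\ge 6(p+1)/\tilde\Lambda_3$, which is exactly the second entry of \eqref{eq:near-radius}.

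\textbf{Quadratic term.} One part covers it once $\frac{\tilde\Lambda_3}{6(p+1)}\Delta_k\ge\tfrac12\epsilon^b$, i.e. $C_{\rm ns}\ge 3(p+1)/\tilde\Lambda_3$, the third entry of \eqref{eq:near-radius}.

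\textbf{Terms $j=4,\dots,p+1$.} Each needs $\frac{\Lambda_j}{j!}\Delta_k^{j-3}\le\frac{\tilde\Lambda_3}{6(p+1)}$, which is precisely the second inequality of \Cref{assumption epsilon small}. There are $p-2$ such terms.

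In total the coverings use $2+(p-2)=p$ parts, so one part is left over. This gives
\[
E_k(\s_k)\ge\frac{\tilde\Lambda_3}{6(p+1)}\Delta_k^3=\frac{\tilde\Lambda_3C_{\rm ns}^3}{6(p+1)}\epsilon^{3b}>0 .
\]

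The rest follows as in the negative-curvature case. From the definition of $E_k$,
\[
f(\x_k)-T_p(\x_k,\s_k)\ge \frac{\tilde\Lambda_3C_{\rm ns}^3}{6(p+1)}\epsilon^{3b}+\frac{L}{(1-\eta)(p+1)}\|\s_k\|^{p+1}.
\]
Repeating the computation \eqref{analysis for ratio test} with \eqref{lip bound 1} then gives $|\rho_k-1|\le1-\eta$, so the iteration is successful. Multiplying the model decrease by $\eta$ yields $f(\x_k)-f(\x_{k+1})\ge c_2\epsilon^{3b}$, and $3b=\tfrac34\tfrac{p+1}{p}$ gives the stated order.

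The main obstacle is bookkeeping: showing that the single cubic gain covers all $p+1$ negative contributions with something left over. This works only because $C_{\rm ns}$ in \eqref{eq:near-radius} and \Cref{assumption epsilon small} are calibrated to match, so that the gradient, quadratic and higher-order terms each consume one part and exactly one part remains.

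Two points need care. First, the quadratic term is signed unfavourably only when $\lambda_{\min}(\mathbf H_k)>0$, so the bound $|\lambda_{\min}|\le\epsilon^b$ handles both signs uniformly. Second, \Cref{ass:weak-curvature} uses the same sign convention for $\s_k^{(h)}$ as \Cref{def:reference-direction}, so the cubic contribution is genuinely $+\tilde\Lambda_3\Delta_k^3/6$.
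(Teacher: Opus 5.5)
Your proposal is correct and matches the paper's proof essentially line by line. Both use the same merit function $E_k$ and the same safeguard comparison $E_k(\s_k)\ge E_k(\s_k^{(h)})$, and both split the cubic gain $\frac{\tilde\Lambda_3}{6}\Delta_k^3$ into $p+1$ equal parts absorbed by the gradient, quadratic and higher-order terms via \eqref{eq:near-radius} and \Cref{assumption epsilon small}, with exactly one part left over. One small point: that leftover part makes the model decrease strictly exceed $\frac{L}{(1-\eta)(p+1)}\|\s_k\|^{p+1}$, so you actually get $|\rho_k-1|<1-\eta$ strictly, which is what the strict test $\rho_k>\eta$ in \Cref{HTM algo} requires.
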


\begin{proof}
Let $E_k$, $\s_k$ and $\s_k^{(h)}$ be as in the proof of \Cref{thm:NSNC}; by \Cref{ass:weak-curvature}, $\nabla^3 f(\x_k)[\s_k^{(h)}]^3=-\tilde{\Lambda}_3\Delta_k^3<0$. {The safeguard and equality of the two norms imply $E_k(\s_k)\ge E_k(\s_k^{(h)})$.} Using $\textbf{H}_k[\s_k^{(h)}]^2=\lambda_{\min}(\textbf{H}_k)\Delta_k^2\le\epsilon^b\Delta_k^2$ and $-\tfrac16\nabla^3 f(\x_k)[\s_k^{(h)}]^3=\tfrac{\tilde{\Lambda}_3}{6}\Delta_k^3$, we obtain
\begin{eqnarray*}
E_k(\s_k) &\ge& E_k(\s_k^{(h)})
= -\textbf{g}_k^\top\s_k^{(h)} - \tfrac12\textbf{H}_k[\s_k^{(h)}]^2 - \tfrac16\nabla^3 f(\x_k)[\s_k^{(h)}]^3 - \sum_{4\le j\le p}\tfrac{\nabla^j f(\x_k)}{j!}[\s_k^{(h)}]^j - \tfrac{L\|\s_k^{(h)}\|^{p+1}}{(1-\eta)(p+1)}\\
&\ge& -\|\textbf{g}_k\|\Delta_k - \tfrac{\epsilon^b}{2}\Delta_k^2 + \tfrac{\tilde{\Lambda}_3}{6}\Delta_k^3 - \sum_{4\le j\le p+1}\tfrac{\Lambda_j}{j!}\Delta_k^j\\
&\ge& \Big(\tfrac{\tilde{\Lambda}_3}{6(p+1)}\Delta_k^2-\|\textbf{g}_k\|\Big)\Delta_k + \Big(\tfrac{\tilde{\Lambda}_3}{6(p+1)}\Delta_k-\tfrac{\epsilon^b}{2}\Big)\Delta_k^2 + \tfrac{\tilde{\Lambda}_3}{6(p+1)}\Delta_k^3 + \sum_{4\le j\le p+1}\Big(\tfrac{\tilde{\Lambda}_3}{6(p+1)}-\tfrac{\Lambda_j}{j!}\Delta_k^{j-3}\Big)\Delta_k^3.
\end{eqnarray*}
{By \eqref{eq:Delta-NSWC}, $a=2b$, and the definition of $C_{\rm ns}$ in \eqref{eq:near-radius}, the first two brackets are nonnegative; \Cref{assumption epsilon small} makes every summand nonnegative.} Hence $E_k(\s_k)\ge\frac{\tilde{\Lambda}_3}{6(p+1)}\Delta_k^3>0$, so
$$ f(\x_k)-T_p(\x_k,\s_k)\ge \frac{\tilde{\Lambda}_3}{6(p+1)}\Delta_k^3 + \frac{L}{(1-\eta)(p+1)}\|\s_k\|^{p+1}. $$
As in \eqref{analysis for ratio test}, iteration $k$ satisfies \eqref{ratio test}; therefore,
$$ f(\x_k)-f(\x_{k+1})\ge \eta\big[f(\x_k)-T_p(\x_k,\s_k)\big]\ge {\frac{\eta\tilde{\Lambda}_3C_{\rm ns}^3}{6(p+1)}\epsilon^{3b}}. $$
Substituting $b=\frac{p+1}{4p}$ gives the desired result.
\end{proof}

\begin{remark}
 Since $b = \frac{a}{2}$, the radius in \eqref{eq:Delta-NSWC} has the order of magnitude
 $
{\Delta_k=\Theta(\epsilon^{\frac{a}{2}})},
 $
 which aligns the negative- and weak-curvature regimes. {The common constant $C_{\rm ns}$ is chosen large enough for the leading-term comparisons and fixed enough to control all higher-order terms.}
\end{remark}

\subsection{Near-stationary convex regime}
\label{sec Refinement}

We finally treat the regime
\[
\epsilon \le \|\mathbf{g}_k\| \le \epsilon^a,\qquad \mathbf{H}_k \succeq c I_n \ \text{with}\ c\ge \epsilon^b ,
\]
in which the iterate is near-stationary and the Hessian is positive definite. The argument has two steps. \textbf{Step 1:} the Taylor model is locally convex on a ball of radius $\Theta(\|\mathbf{g}_k\|)$, so a single local convex minimization is well defined and reduces the gradient to $O(\epsilon)$ (\Cref{lemma local min exists Tp,Thm converge in one step Tp}). \textbf{Step 2:} because the model is locally \emph{strongly} convex, {a quadratically convergent local refinement} then drives the gradient below $\epsilon$ and terminates the algorithm (\Cref{cor:newton-refine}). Define the local region
\begin{equation}
\Omega_k := \left\{\s\in\R^n:\|\s\|\le 4c^{-1}\|\mathbf{g}_k\|\right\}.
\label{bound for Omega Tp}
\end{equation}

\paragraph{Step 1: the model is locally convex and one solve gives an $O(\epsilon)$ gradient.}
\begin{lemma}
\label{lemma local min exists Tp}
Assume that $\textbf{H}_k \succeq cI_n \succ 0$ and
\begin{equation}
\|\textbf{g}_k\|
\le
\frac{c}{4}
\min_{3\le j\le p}
\left\{
\left(
\frac{p\Lambda_j}{(j-2)!c}
\right)^{-\frac{1}{j-2}}
\right\}.
\label{bound for g Tp}
\end{equation}
Then the following statements hold.

{(i)} The Taylor model $T_p(\x_k,\s)$ is convex in $\Omega_k$.

{(ii)} There exists a unique local minimizer $\s_k^*$ of $T_p(\x_k,\s)$ in $\Omega_k$.

{(iii)} This local minimizer satisfies
\[
T_p(\x_k,\s_k^*)\le T_p(\x_k,\mathbf{0})=f(\x_k),
\]
with equality if and only if $\s_k^*=\mathbf{0}$.
\end{lemma}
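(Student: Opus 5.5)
The plan is to bound the Hessian of the model uniformly on $\Omega_k$, then use strong convexity to locate a minimizer in the interior of $\Omega_k$.

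\textbf{Part (i).} For $\s\in\Omega_k$ the model Hessian is
\[
\nabla_{\s}^2T_p(\x_k,\s)=\mathbf H_k+\sum_{j=3}^{p}\frac{1}{(j-2)!}\nabla^j f(\x_k)[\s]^{j-2}.
\]
By \Cref{assumption:bounded-derivatives} and the operator norm of \Cref{def:tensor-norm}, each correction term has operator norm at most $\frac{\Lambda_j}{(j-2)!}\|\s\|^{j-2}$. On $\Omega_k$ we have $\|\s\|\le 4c^{-1}\|\mathbf g_k\|$, so \eqref{bound for g Tp} gives, for each $j=3,\dots,p$,
\[
\|\s\|\le\Big(\frac{p\Lambda_j}{(j-2)!c}\Big)^{-1/(j-2)},
\qquad\text{hence}\qquad
\frac{\Lambda_j}{(j-2)!}\|\s\|^{j-2}\le\frac{c}{p}.
\]
Summing over the $p-2$ terms gives
\[
\nabla_{\s}^2T_p(\x_k,\s)\succeq\Big(c-\tfrac{(p-2)c}{p}\Big)I=\tfrac{2c}{p}I\succ0 .
\]
So $T_p(\x_k,\cdot)$ is in fact $\mu$-strongly convex on the convex set $\Omega_k$, with $\mu:=2c/p$.

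\textbf{Part (ii).} A continuous strictly convex function on the compact convex ball $\Omega_k$ has a unique minimizer $\s_k^*$ over $\Omega_k$. It remains to show that $\s_k^*$ lies in the interior, so that it is a genuine unconstrained local minimizer with $\nabla_{\s}T_p(\x_k,\s_k^*)=\0$. Strong convexity along the segment from $\0$ gives, for every $\s\in\Omega_k$,
\[
T_p(\x_k,\s)\ge f(\x_k)+\mathbf g_k^\top\s+\tfrac{\mu}{2}\|\s\|^2 ,
\]
and the right-hand side exceeds $f(\x_k)=T_p(\x_k,\0)$ whenever $\|\s\|>2\|\mathbf g_k\|/\mu=p\|\mathbf g_k\|/c$.

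\textbf{Main obstacle.} The constants must fit together. For $p\le 4$ the threshold $p\|\mathbf g_k\|/c$ is at most the radius $4\|\mathbf g_k\|/c$ of $\Omega_k$, so any minimizer lies in the interior and the argument closes. For $p>4$ this crude bound fails. In that case I would exploit that the bound $c/p$ per term is loose near the origin: the correction terms scale like $\|\s\|^{j-2}$ and are small for $\|\s\|\lesssim p\|\mathbf g_k\|/c$. Alternatively, I would shrink the region on which the growth argument is applied, so that the Hessian lower bound there is close to $c$, giving $\mu\approx c/2$ and threshold $4\|\mathbf g_k\|/c$. Failing both, I would accept that the minimizer may sit on $\partial\Omega_k$ and define $\s_k^*$ as the unique minimizer of $T_p$ over $\Omega_k$, which suffices for the later use.

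\textbf{Part (iii).} Since $\0\in\Omega_k$ and $\s_k^*$ minimizes over $\Omega_k$, we get $T_p(\x_k,\s_k^*)\le T_p(\x_k,\0)=f(\x_k)$. If equality holds, then $\0$ is also a minimizer over $\Omega_k$, and uniqueness from strict convexity forces $\s_k^*=\0$. Conversely, $\s_k^*=\0$ trivially gives equality; this case occurs only when $\mathbf g_k=\0$.
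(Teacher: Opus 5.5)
Your parts (i) and (iii) match the paper, but part (ii) has a genuine gap for every $p\ge5$: you never show that the minimizer of $T_p(\x_k,\cdot)$ over $\Omega_k$ lies in the interior. Integrating the uniform bound $\nabla_{\s}^2T_p\succeq\frac{2c}{p}I_n$ along the segment from $\0$ charges each of the $p-2$ correction terms its worst value $\frac cp$ on the whole ball. That leaves only $\frac cp\|\s\|^2$ of the quadratic term, so your growth threshold $p\|\mathbf g_k\|/c$ lies outside $\Omega_k$. None of the alternatives you list is carried out. The fallback also does not suffice for the later use. A minimizer on $\partial\Omega_k$ need not be a local minimizer of $T_p(\x_k,\cdot)$, which is what (ii) asserts. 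In general it only satisfies $\nabla_{\s}T_p(\x_k,\s_k^*)=-\lambda\s_k^*$ with $\lambda\ge0$, so you lose the identity $\nabla_{\s}T_p(\x_k,\s_k^*)=\0$ on which \Cref{Thm converge in one step Tp} rests. (Already at $p=4$ your estimate gives only $T_p\ge f(\x_k)$ on $\partial\Omega_k$. You would then also need the strict decrease along $-\mathbf g_k$ to exclude boundary minimizers.)

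The missing step, which is what the paper does, is to expand the model directly on the sphere $\|\s\|=4c^{-1}\|\mathbf g_k\|$, keeping the full quadratic term and the $\frac1{j!}$ weights. On this sphere:
\begin{itemize}
\item $\mathbf g_k^\top\s\ge-\frac c4\|\s\|^2$;
\item $\frac12\mathbf H_k[\s]^2\ge\frac c2\|\s\|^2$;
\item your own estimate $\frac{\Lambda_j}{(j-2)!}\|\s\|^{j-2}\le\frac cp$ gives $\frac{\Lambda_j}{j!}\|\s\|^j\le\frac{c}{p\,j(j-1)}\|\s\|^2$.
\end{itemize}
Hence
\[
T_p(\x_k,\s)-f(\x_k)\ge\Big(\frac c4-\frac cp\sum_{j=3}^p\frac{1}{j(j-1)}\Big)\|\s\|^2=\Big(\frac c4-\frac cp\Big(\frac12-\frac1p\Big)\Big)\|\s\|^2>0
\]
for all $p\ge3$ whenever $\mathbf g_k\ne\0$. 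This is the precise form of your ``loose near the origin'' idea. Integrating the pointwise Hessian bound $c-\sum_{j}\frac{\Lambda_j}{(j-2)!}\tau^{j-2}\|\s\|^{j-2}$ against the weight $1-\tau$ gives the factors $\int_0^1(1-\tau)\tau^{j-2}\,d\tau=\frac1{j(j-1)}$ rather than $\frac12$. Since $T_p>T_p(\x_k,\0)$ on $\partial\Omega_k$, the unique minimizer over $\Omega_k$ is interior. It is therefore a critical point and a genuine local minimizer, and the rest of your argument goes through.
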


\begin{proof}
We first prove the convexity of $T_p(\x_k,\s)$ in $\Omega_k$.
For any $\s\in\Omega_k$, the Hessian of the Taylor model is
\[
\nabla^2_{\s}T_p(\x_k,\s)
=
\textbf{H}_k
+
\sum_{j=3}^p \frac{1}{(j-2)!}\nabla_x^j f(\x_k)[\s]^{j-2} \succeq
\left[
c-\sum_{j=3}^p \frac{\Lambda_j}{(j-2)!}\|\s\|^{j-2}
\right]I_n.
\]
{where the last inequality is based on Assumption~\ref{assumption:bounded-derivatives}.}
Since $\|\s\|\le 4c^{-1}\|\textbf{g}_k\|$ and \eqref{bound for g Tp} holds, it follows that
\[
\sum_{j=3}^p \frac{\Lambda_j}{(j-2)!}\|\s\|^{j-2}
\le \frac{p-2}{p}c,\quad \text{and hence} \quad \nabla^2_{\s}T_p(\x_k,\s)\succeq \frac{2c}{p}I_n \succ 0.
\]
Therefore $T_p(\x_k,\s)$ is convex in $\Omega_k$.
Next, we show that the minimizer lies in the interior of $\Omega_k$.
Let $\s_c\in\partial\Omega_k$, so that $\|\s_c\|=4c^{-1}\|\textbf{g}_k\|$. Then
\begin{align*}
T_p(\x_k,\s_c)-T_p(\x_k,\mathbf{0})
=
\textbf{g}_k^\top \s_c
+
\frac12 \textbf{H}_k[\s_c]^2
+
\sum_{j=3}^p \frac{1}{j!}\nabla_x^j f(\x_k)[\s_c]^j \ge
-\|\textbf{g}_k\|\,\|\s_c\|
+\frac{c}{2}\|\s_c\|^2
-\sum_{j=3}^p \frac{\Lambda_j}{j!}\|\s_c\|^j.
\end{align*}
Using $\|\s_c\|=4c^{-1}\|\textbf{g}_k\|$ and \eqref{bound for g Tp}, we deduce that
$
T_p(\x_k,\s_c)-T_p(\x_k,\mathbf{0})>0.
$
Hence the minimizer of $T_p(\x_k,\s)$ over $\Omega_k$ cannot lie on the boundary $\partial\Omega_k$, and must therefore lie in the interior of $\Omega_k$.
Since $T_p(\x_k,\s)$ is strictly convex in $\Omega_k$, it follows that this minimizer is unique. Denote it by $\s_k^*$. By definition,
$
T_p(\x_k,\s_k^*)\le T_p(\x_k,\mathbf{0})=f(\x_k),
$
and equality holds only if $\s_k^*=\mathbf{0}$, since otherwise strict convexity would be violated.
\end{proof}

\begin{theorem}
\label{Thm converge in one step Tp}
Assume that $\textbf{H}_k \succeq cI_n \succ 0$ and
\begin{equation}
\|\textbf{g}_k\|
\le
\frac{c}{4}
\min\left\{
\min_{3\le j\le p}
\left(
\frac{p\Lambda_j}{(j-2)!c}
\right)^{-\frac{1}{j-2}},
\,
\left(\frac{\epsilon}{L}\right)^{1/p}
\right\}.
\label{bound for g 2 Tp}
\end{equation}
Let $\s_k^*$ be the unique local minimizer of $T_p(\x_k,\s)$ in $\Omega_k$. {Then
$
\big\|\nabla_{\x} f(\x_k+\s_k^*)\big\| \le \epsilon.
$
Consequently, \Cref{HTM algo} terminates after the step in near-stationary convex regime.}
\end{theorem}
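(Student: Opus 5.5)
Since \eqref{bound for g 2 Tp} implies \eqref{bound for g Tp}, \Cref{lemma local min exists Tp} applies. So $\s_k^*$ exists, is unique, and lies in the \emph{interior} of $\Omega_k$. The proof then has three steps.

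\textbf{Step 1 (first-order condition).} Because $T_p(\x_k,\cdot)$ is differentiable and $\s_k^*$ is an interior local minimizer, we get
\[
\nabla_{\s}T_p(\x_k,\s_k^*)=\0 .
\]
This is the only place the interior statement of the lemma is used. It is also why the lemma proved that the boundary values strictly exceed $T_p(\x_k,\0)$.

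\textbf{Step 2 (Taylor gradient bound).} Apply \eqref{lip bound 2} of \Cref{tech lemma 1} with $\s=\s_k^*$:
\[
\|\nabla_{\x} f(\x_k+\s_k^*)\|
=\|\nabla_{\x} f(\x_k+\s_k^*)-\nabla_{\s}T_p(\x_k,\s_k^*)\|
\le L\|\s_k^*\|^p .
\]

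\textbf{Step 3 (control of $\|\s_k^*\|$).} Membership in $\Omega_k$ gives $\|\s_k^*\|\le 4c^{-1}\|\mathbf g_k\|$. The second entry of the minimum in \eqref{bound for g 2 Tp} gives $\|\mathbf g_k\|\le \frac{c}{4}(\epsilon/L)^{1/p}$. Hence $\|\s_k^*\|\le(\epsilon/L)^{1/p}$, and therefore
\[
L\|\s_k^*\|^p\le\epsilon .
\]
Termination then follows, since the while-loop test of \Cref{HTM algo} fails at $\x_{k+1}=\x_k+\s_k^*$. Here one should read the near-stationary convex branch as setting $\s_k=\s_k^*$, possibly after the local Newton refinement, which only improves the gradient.

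The only subtle point is Step 1. We must make sure $\s_k^*$ is a genuine unconstrained stationary point of the model rather than a minimizer on $\partial\Omega_k$. This is exactly what part (ii)/(iii) of \Cref{lemma local min exists Tp} and the boundary comparison in its proof provide.

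A secondary technicality concerns Step 2. \eqref{lip bound 2} requires the segment $[\x_k,\x_k+\s_k^*]$ to lie where Assumption~\ref{assumption Liptz} holds. That assumption is global, so no issue arises.

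Note that the required smallness of $\|\mathbf g_k\|$ is consistent with the regime. There $\|\mathbf g_k\|\le\epsilon^a$ with $a=\frac{p+1}{2p}>\frac1p$, while $c\ge\epsilon^b$ makes $\frac c4(\epsilon/L)^{1/p}\gtrsim \epsilon^{b+1/p}$. So the hypothesis is an assumption of the theorem, not something we must derive, and I would simply invoke it as stated.
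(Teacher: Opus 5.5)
Your proposal is correct and matches the paper's proof step for step. The interior minimizer from \Cref{lemma local min exists Tp} gives $\nabla_{\s}T_p(\x_k,\s_k^*)=\0$, and then \eqref{lip bound 2} together with $\|\s_k^*\|\le 4c^{-1}\|\mathbf g_k\|\le(\epsilon/L)^{1/p}$ yields $\|\nabla_{\x} f(\x_k+\s_k^*)\|\le\epsilon$. (Your aside that a later Newton refinement ``only improves the gradient'' is not justified in general, but it is not needed, since the loop test already fails at $\x_k+\s_k^*$.)
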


\begin{proof}
By Lemma~\ref{lemma local min exists Tp}, the point $\s_k^*$ is well defined and lies in $\Omega_k$. Since it is an interior minimizer of $T_p(\x_k,\s)$, it satisfies
$
\nabla_{\s}T_p(\x_k,\s_k^*)=0.
$
Therefore, using Assumption~\ref{assumption Liptz},

\begin{align*}
\big\|\nabla_{\x} f(\x_k+\s_k^*)\big\|
=
\big\|\nabla_{\x} f(\x_k+\s_k^*)-\nabla_{\s}T_p(\x_k,\s_k^*)\big\| \le
L\|\s_k^*\|^p
\le
L(4c^{-1}\|\textbf{g}_k\|)^p.
\end{align*}
{By \eqref{bound for g 2 Tp}, we have
$
L(4c^{-1}\|\textbf{g}_k\|)^p\le \epsilon.
$
Hence
$
\big\|\nabla_{\x} f(\x_k+\s_k^*)\big\| \le \epsilon,
$
as required.}
\end{proof}

\paragraph{Step 2: refinement to accuracy $\epsilon$.}
\Cref{Thm converge in one step Tp} requires $\|\mathbf{g}_k\|$ to satisfy the smallness condition~\eqref{bound for g 2 Tp}. The next corollary expresses this in terms of the regime exponents $a,b$ {and isolates the additional local condition needed in the cubic case}.

{
\begin{assumption}[Local convex refinement]
\label{ass:local-refinement}
Whenever $\epsilon\le\|\mathbf g_k\|\le\epsilon^a$ and
$\mathbf H_k\succeq\epsilon^bI$, the local conditions of
\Cref{lemma local min exists Tp,Thm converge in one step Tp} hold. For
$p=3$, once the model step has produced a point with gradient
$O(\epsilon)$ in that strongly convex neighborhood, a monotone Newton-type
local solver is well defined and reaches $\|\nabla f\|\le\epsilon$ in at
most $O(\log\log(1/\epsilon))$ further objective and derivative
evaluations.
\end{assumption}
}

\begin{corollary}[Refinement to accuracy $\epsilon$]
\label{cor:newton-refine}
Let $a=\frac{p+1}{2p}$ and $b=\frac a2=\frac{p+1}{4p}$, and suppose $\epsilon\le\|\mathbf{g}_k\|\le\epsilon^a$ and $\mathbf{H}_k\succeq cI_n\succeq\epsilon^b I_n$. Let $\s_k^*=\argmin_{\s\in\Omega_k}T_p(\x_k,\s)$. Then
\[
\|\nabla_{\x} f(\x_k+\s_k^*)\|
\le L\big(4c^{-1}\|\mathbf{g}_k\|\big)^p
\le L4^{p}\epsilon^{p(a-b)}
= L4^{p}\epsilon^{(p+1)/4}
= O(\epsilon).
\]
For $p\ge4$ we have $p(a-b)=\frac{p+1}{4}>1$, so $\|\nabla_{\x} f(\x_k+\s_k^*)\|\le\epsilon$ already after this single step {for sufficiently small $\epsilon$}. For $p=3$ the exponent equals $1$, giving $\|\nabla_{\x} f(\x_k+\s_k^*)\|=O(\epsilon)$; {under Assumption \Cref{ass:local-refinement}}, {a further $O(\log\log(1/\epsilon))$ refinement evaluations yield $\|\nabla_{\x} f\|\le\epsilon$. Thus the convex-regime cost is of strictly lower order than the polynomial terms in the global complexity bound}, and \Cref{HTM algo} terminates.
\end{corollary}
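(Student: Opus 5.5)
The argument reduces to \Cref{lemma local min exists Tp} and \Cref{Thm converge in one step Tp}, plus an exponent count. I will first check that the hypotheses of \Cref{lemma local min exists Tp} hold with $c=\epsilon^b$, or with any $c\ge\epsilon^b$. Condition \eqref{bound for g Tp} requires $\|\mathbf g_k\|$ to be at most a quantity of order $c\cdot c^{1/(j-2)}\ge\epsilon^{b(j-1)/(j-2)}$. The largest such exponent occurs at $j=3$ and equals $2b=a$. So for $p=3$ the condition reads $\|\mathbf g_k\|\lesssim \epsilon^{2b}$, and it is met by $\|\mathbf g_k\|\le\epsilon^a$ only up to a constant. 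This is why the paper packages these local conditions into \Cref{ass:local-refinement}. I will therefore invoke that assumption, or the smallness of $\epsilon$ whenever the constants permit. This gives that the model is strictly convex on $\Omega_k$ and that the interior minimizer $\s_k^*$ exists and is unique, with $\nabla_{\s}T_p(\x_k,\s_k^*)=0$.

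Next, \eqref{lip bound 2} applied at $\s_k^*$ gives $\|\nabla f(\x_k+\s_k^*)\|\le L\|\s_k^*\|^p\le L(4c^{-1}\|\mathbf g_k\|)^p$. This is the first inequality, and it follows exactly as in the proof of \Cref{Thm converge in one step Tp}. For the second inequality I substitute $c\ge\epsilon^b$ and $\|\mathbf g_k\|\le\epsilon^a$, which gives $L4^p\epsilon^{p(a-b)}$. Since $a-b=b=\frac{p+1}{4p}$, this equals $L4^p\epsilon^{(p+1)/4}$.

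The case split is as follows. For $p\ge4$ the exponent $(p+1)/4>1$, so $L4^p\epsilon^{(p+1)/4}\le\epsilon$ as soon as $\epsilon^{(p-3)/4}\le 1/(L4^p)$, which holds for sufficiently small $\epsilon$. The while-loop test then fails and \Cref{HTM algo} terminates. For $p=3$ the bound is only $64L\epsilon$. I will then appeal directly to \Cref{ass:local-refinement}: the point $\x_k+\s_k^*$ lies in the strongly convex neighborhood and has gradient $O(\epsilon)$, so the monotone Newton-type solver reaches $\|\nabla f\|\le\epsilon$ within $O(\log\log(1/\epsilon))$ evaluations. Since $\log\log(1/\epsilon)=o(\epsilon^{-\theta})$ for every $\theta>0$, this cost is of lower order than the far-field bound.

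The main obstacle is the bookkeeping in the first step. Verifying \eqref{bound for g Tp} from $\|\mathbf g_k\|\le\epsilon^a$ is tight at $j=3$, because both sides scale like $\epsilon^{2b}$. It therefore holds only when the constant $\tfrac14(3\Lambda_3)^{-1}$ exceeds one, and otherwise it needs the explicit assumption. I would make this dependence transparent rather than hide it. The remaining steps are direct substitutions.
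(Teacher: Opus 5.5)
Your proposal is correct and follows the paper's own proof. The first inequality is the Lipschitz bound \eqref{lip bound 2} at the interior minimizer, exactly as in \Cref{Thm converge in one step Tp}, with the local hypotheses supplied by \Cref{ass:local-refinement}. This is followed by the substitution $p(a-b)=\frac{p+1}{4}$, smallness of $\epsilon$ for $p\ge4$, and the assumed Newton-type refinement for $p=3$.

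Your remark that the $j=3$ term of \eqref{bound for g Tp} is tight is a fair point that the paper leaves implicit. Because $a=2b$ for every $p$, this tightness is not special to $p=3$: the first sentence of \Cref{ass:local-refinement} is needed for $p\ge4$ as well, which your plan of invoking the assumption generally already covers.
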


\begin{proof}
The first chain is \Cref{Thm converge in one step Tp} together with $\|\s_k^*\|\le4c^{-1}\|\mathbf{g}_k\|\le4\epsilon^{a-b}$ and $c\ge\epsilon^b$; substituting $a=\frac{p+1}{2p}$, $b=\frac{p+1}{4p}$ gives $p(a-b)=\frac{p+1}{4}$. {The $p\ge4$ conclusion follows because the exponent is strictly larger than one. For $p=3$, the displayed estimate gives only $O(\epsilon)$, not necessarily $\epsilon$ with unit constant; the remaining statement is exactly \Cref{ass:local-refinement}, whose quadratically convergent local solver is available because $\nabla^2_\s T_p\succeq\frac{2c}{p}I_n\succ0$ on $\Omega_k$ by \Cref{lemma local min exists Tp}.}
\end{proof}

\begin{remark}
The Step-2 refinement is the trust-region analogue of the local acceleration used in AR$p$-type methods: once the iterate lies in a convex basin, asymptotic convergence is governed by the local rate rather than by the global model decrease. In practice the local solve can be a Newton step $\s_k=-\mathbf{H}_k^{-1}\mathbf{g}_k$, or a cubic/higher-order Newton step~\cite{silina2022unregularized}; relaxed refinement steps of this kind also appear in the QQR method~\cite{cartis2023second} and in ARC~\cite{cartis2007adaptive}, and more general convex high-order schemes are available~\cite{Nesterov2021implementable,Nesterov2020inexact,Nesterov2021inexact,Nesterov2021superfast,Nesterov2022quartic,Nesterov2006cubic}. Further local-convergence guarantees for $p$th-order methods are given in~\cite{welzel2025local}.
\end{remark}

{
\begin{theorem}[Overall conditional evaluation complexity]
\label{thm:overall-complexity}
Let $p\ge3$, $a=(p+1)/(2p)$, and $b=a/2$. Under
Assumptions~\ref{assumption Liptz}, \ref{assumption:bounded-derivatives}, \ref{ass:weak-curvature}, \ref{assumption epsilon small}, and \ref{ass:local-refinement}, the
reference safeguard \eqref{eq:safeguarded-boundary-step}, and the
initialization condition $\Delta_0\ge\kappa_0\epsilon^a$, Ada--HTM returns
an iterate satisfying $\|\nabla f(\x_k)\|\le\epsilon$ after
\[
O(\epsilon^{-2a})+O(\epsilon^{-3b})
+O(\log\log(1/\epsilon))
=O\!\left(\epsilon^{-\frac{p+1}{p}}\right)
\]
objective and derivative evaluations.
\end{theorem}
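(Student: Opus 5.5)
The plan is to partition the outer iterations of \Cref{HTM algo} by regime and count each class with the results already established. The algorithm stops once $\|\nabla f(\x_k)\|<\epsilon$. Every iteration before that is exactly one of four kinds: far-field ($\|\mathbf g_k\|\ge\epsilon^a$), near-stationary negative curvature, near-stationary weak curvature, or near-stationary convex. Each iteration uses a bounded number of evaluations: one $f$ evaluation at the trial point and one set of derivatives up to order $p$, since the PAM candidate and the safeguard only use model contractions. So the evaluation count and the iteration count agree up to a constant, and it suffices to bound the iterations in each class.

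\textbf{Far-field iterations.} Under the initialization $\Delta_0\ge\kappa_0\epsilon^a$, \Cref{thm:Delta-lower} gives $\Delta_{\min}\ge\kappa_\Delta\epsilon^a$. \Cref{lemma successful far,lemma unsuccessful} then bound the successful and unsuccessful far-field iterations by $O(\epsilon^{-2a})+O(\log(1/\epsilon))$, as summarized in \Cref{thm far complexity}.

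One care point: the iterates may leave and re-enter the far field. The count of $|\mathcal S|$ is unaffected, because it uses only the total decrease budget $f(\x_0)-f_{\rm low}$ and the monotonicity of $f$ along accepted steps. Near-stationary nonconvex steps also decrease $f$, by \Cref{thm:NSNC,thm:NSWC}. For $|\mathcal U|$, a re-entry may restart the radius. I would therefore check that the log-ratio argument is applied with the reset radius still bounded below by $\kappa_\Delta\epsilon^a$, or note that the radius is simply carried over. This adds at most $O(\log(1/\epsilon))$ per re-entry.

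\textbf{Nonconvex near-stationary iterations.} Each of these is successful and decreases $f$ by at least $\min\{\eta C_{\rm ns}^2/(2p),c_2\}\epsilon^{3b}$, by \Cref{thm:NSNC,thm:NSWC}. Here \Cref{ass:weak-curvature} and \Cref{assumption epsilon small} supply the hypotheses. Summing against the same budget $f(\x_0)-f_{\rm low}$ bounds their number by $O(\epsilon^{-3b})$. In fact the budget is shared between far-field successful steps and these steps, so their combined count is at most $(f(\x_0)-f_{\rm low})/\min\{\kappa_f\epsilon^{2a},c\,\epsilon^{3b}\}$. Since $3b=\tfrac34\cdot\tfrac{p+1}{p}<2a$, this is $O(\epsilon^{-2a})$. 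This combined count also bounds the number of re-entries into the far field, and hence the total $\log$ overhead by $O(\epsilon^{-3b}\log(1/\epsilon))$, which is still $o(\epsilon^{-2a})$ because $2a-3b=a/2>0$.

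\textbf{Convex iterations.} The first time the convex regime is entered, \Cref{ass:local-refinement} guarantees the hypotheses of \Cref{lemma local min exists Tp,Thm converge in one step Tp}. By \Cref{cor:newton-refine}, either a single step terminates ($p\ge4$), or at most $O(\log\log(1/\epsilon))$ further evaluations do ($p=3$). Hence the convex regime is visited once and terminates the algorithm.

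\textbf{Expected main obstacle.} The bookkeeping of regime switching is the step I expect to need the most care. The far-field lemmas were stated as if that phase were a single contiguous block, and the unsuccessful-iteration count depends on the radius entering the phase. I would handle this with the shared decrease budget above, together with the observation that the far-field radius is never reduced below $\Delta_{\min}$ regardless of how often the phase is entered. Adding the three contributions gives $O(\epsilon^{-2a})+O(\epsilon^{-3b})+O(\log\log(1/\epsilon))=O(\epsilon^{-(p+1)/p})$.
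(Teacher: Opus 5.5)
Your proposal is correct and follows the paper's proof. You bound the far-field iterations by the far-field complexity theorem and the nonconvex near-stationary iterations by summing their $\epsilon^{3b}$ decreases against $f(\x_0)-f_{\rm low}$; the convex regime is handled by the refinement corollary, and the total is dominated by $\epsilon^{-2a}$ since $3b<2a$. Your extra bookkeeping for repeated entries into the far field makes explicit a point the paper's proof passes over by treating that phase as a single block: there are at most $O(\epsilon^{-3b})$ re-entries, each adding $O(\log(1/\epsilon))$ unsuccessful iterations, and this is $o(\epsilon^{-2a})$ because $2a-3b=a/2>0$.
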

\begin{proof}
The far-field bound is \Cref{thm far complexity}. In either nonconvex
near-stationary regime, \Cref{thm:NSNC,thm:NSWC} gives a decrease of at least
$\kappa_{\rm ns}\epsilon^{3b}$ for a constant $\kappa_{\rm ns}>0$; since $f$ is
bounded below, there are at most $O(\epsilon^{-3b})$ such iterations. The
convex regime contributes the refinement cost in
\Cref{cor:newton-refine}. Finally, $2a=(p+1)/p$ and
$3b=3(p+1)/(4p)<2a$, so the far-field term dominates.
\end{proof}

The result is conditional on the displayed smoothness, nondegeneracy, and
local-refinement assumptions. It is not an unconditional improvement for
every high-order trust-region radius rule, and it does not require PAM to
solve the boundary problem globally.
}

\section{{Numerical Experiments}}
\label{sec numerical}


\subsection{Application I: Cubic polynomial optimization over the unit ball}

We first evaluate PAM (\Cref{alg1}) as an inner solver for the cubic polynomial
optimization problem with a unit-ball constraint (the CP-ball problem),
\begin{equation}\label{e19}
\tag{CP-ball}
\begin{aligned}
\min_{\x \in \mathbb{R}^n} &\hspace{0.4cm} f(\x)=\A\x^3+\x^\top B\x+\c^\top \x\\
\textup{s.t.}~ &\hspace{0.4cm}\|\x\|\leq1.
\end{aligned}
\end{equation}
{Unlike its quadratic counterpart, the trust-region subproblem, problem~\eqref{e19} is
NP-hard~\cite{buchheim2021lower}. It therefore provides a useful test of whether PAM can
attain solutions that are globally certified a posteriori on structured instances.}

{Introducing a slack variable $\theta$ with $\|\x\|^2+\theta^2=1$, followed by the
quadratic shift $-\alpha(\|\x\|^2+\theta^2)$, converts the CP-ball problem into the
sphere-constrained cubic model of \Cref{sec:CSPOP} in dimension $n+1$. We therefore solve
its multilinear reformulation using PAM. The complete reformulation, the sufficient shift
condition $\alpha\geq 3\|\A\|_F+\|B\|_F$ together with the resulting exactness guarantee
(\Cref{them5}), and the parameter-sensitivity results are given in \Cref{app:cpball}.}

\subsubsection{CP-ball: numerical results}\label{sec:exp-cspop}

\paragraph{Setup.} {All methods are implemented in Python (NumPy with Numba
just-in-time compilation) and run on a single CPU core. PAM uses
$\alpha=3\sqrt2\,\|\T\|_F$, proximal weights $\beta_i=1$, inner tolerance
$\varepsilon_{\rm in}=10^{-9}$, and a cap of $2000$ inner iterations. Results are
averaged over $20$ random instances from each of the five structural classes of the data
$(\g,\H,\T_0)$ in \Cref{tab:cspop-structures}. Parameter sensitivity is reported in
\Cref{app:cpball}. An open-source implementation of PAM and Ada--HTM, together with
scripts and fixed seeds reproducing all experiments in this section, is available at
\texttt{[repository URL to be inserted]}.}

\paragraph{PAM attains the certified global minimum.} {For dimensions at which the
order-$2$ moment--SOS (Lasserre) relaxation~\cite{lasserre2001global} is tractable, we
use its lower bound $L_{\rm sos}$ to certify the PAM value $U$: if
$U-L_{\rm sos}\le 10^{-3}$, {the PAM solution is SOS-certified to this
tolerance (the numerical SDP bound is not an exact symbolic certificate)}. At $n=5$,
PAM is certified on all $20$ instances in each of the five structural classes, with
median gaps of order $10^{-8}$ (\Cref{tab:cspop-structures}); this includes the
trust-region hard case (class~2) and the negative-curvature class (class~3), which
contains many competing local minima. These certificates are a posteriori: they
demonstrate the observed global accuracy of PAM on the tested instances, not a
worst-case guarantee, and the relaxation was tight on every instance for which we
computed it. Forming the certificate requires an SDP whose moment matrix has size
$\binom{n+2}{2}$ and whose cost grows as $O(n^6)$, so it is intractable beyond
$n\approx 10$. On the dimensions where the SDP is tractable, PAM is $40$--$245$ times
faster while matching its bound to approximately $10^{-8}$, and PAM can then be run far
beyond the range accessible to the certificate
(\Cref{fig:cspop-global}(a)--(b); timing details in \Cref{app:cpball}).}

\begin{table}[!htbp]
\centering
\caption{{Five CP-ball test classes at $n=5$ (data $(\g,\H,\T_0)$ with i.i.d.\ standard
normal entries). The final two columns report the number of PAM solutions certified by the
order-$2$ SOS relaxation and the median certificate gap over $20$ instances.}}
\label{tab:cspop-structures}
\begin{tabular}{llcc}
\hline
Class $(\g,\H,\T_0)$ & Character & cert.\ global & median $|U-L_{\rm sos}|$ \\
\hline
1. Dense $\T_0$, indefinite $\H$  & nonconvex (generic) & $20/20$ & $5.0\times10^{-8}$ \\
2. Dense $\T_0$, $\H\succ0,\ \g\approx\0$ & trust--region \emph{hard case} & $20/20$ & $2.6\times10^{-8}$ \\
3. Dense $\T_0$, neg. def. $\H$  & nonconvex (negative curvature) & $20/20$ & $4.8\times10^{-8}$ \\
4. $\H$ with eig.\ in $[10^{-6},1]$ & ill--conditioned ($\kappa\!\sim\!10^6$) & $20/20$ & $1.8\times10^{-8}$ \\
5. $\T_0$ rank--$1$ & low--rank cubic & $20/20$ & $4.3\times10^{-8}$ \\
\hline
\end{tabular}
\end{table}

\begin{figure}[!htbp]
\centering
\includegraphics[width=\textwidth]{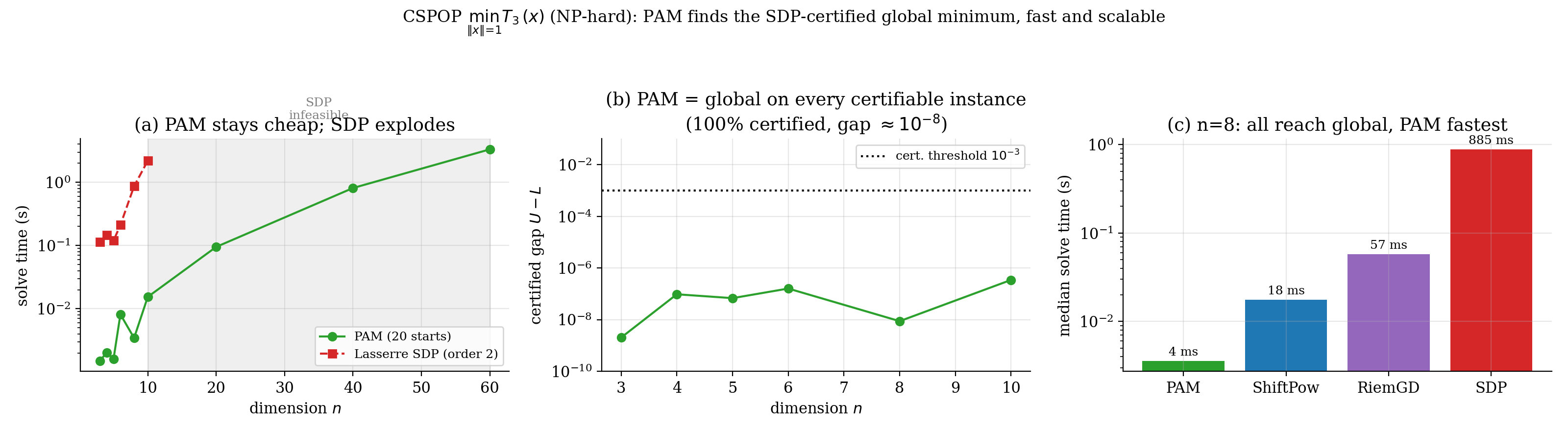}
\caption{{CSPOP $\min_{\|\x\|=1}T_3(\x)$, $20$ random instances per $n$ across the five
classes of \Cref{tab:cspop-structures}. (a) Wall time of PAM versus the order-$2$ SOS
certificate, which becomes intractable beyond $n\approx10$. (b) Certified gap
$U-L_{\rm sos}$ on the dimensions where the SDP is solvable. (c) Wall-time comparison of
sphere solvers at $n=8$.}}
\label{fig:cspop-global}
\end{figure}

\paragraph{Comparison with other sphere solvers.} {At $n=8$, with a small
multistart, PAM, Riemannian gradient descent~\cite{absil2008optimization}, and the
GEAP-style shifted-power method~\cite{kolda2014adaptive} all attain the certified value;
from a single random start, each solver reaches it on $16$ of the $20$ instances
(\Cref{tab:cspop-rest}). PAM is $4$--$14$ times faster
(\Cref{fig:cspop-global}(c)). Extended wall-time comparisons are reported
in \Cref{app:cpball} (\Cref{fig:cspop-walltime}).}

\paragraph{Low-rank tensors: structure-aware PAM scales.} When the
cubic tensor $\T_0$ is low-rank (class 5), {among the tested implementations
only the structure-aware PAM implementation exploits the rank-$R$ representation, with
block contractions costing $O(Rn)$ rather than $O(n^3)$; the shifted-power and Riemannian
baselines are run with dense contractions, so the comparison measures the benefit of a
structure-aware implementation rather than an intrinsic limitation of those methods}. On
a rank-$1$ instance at $n=120$, structure-aware PAM takes $1.7$\,ms, compared with
$94$\,ms for {dense} shifted power and $233$\,ms for dense PAM; all three
return {the same objective value}. Within a $10$\,s budget PAM reaches
$n\approx2386$, versus $n\approx154$--$296$ for the dense-contraction solvers
(\Cref{fig:cspop-lowrank}). {All contractions are evaluated matrix-free; the
closed-form formulas and their structured complexity are given in \Cref{app:cpball}
(\Cref{rem:closed-form-contraction}).}

\begin{figure}[h]
\centering
\includegraphics[width=0.6\textwidth]{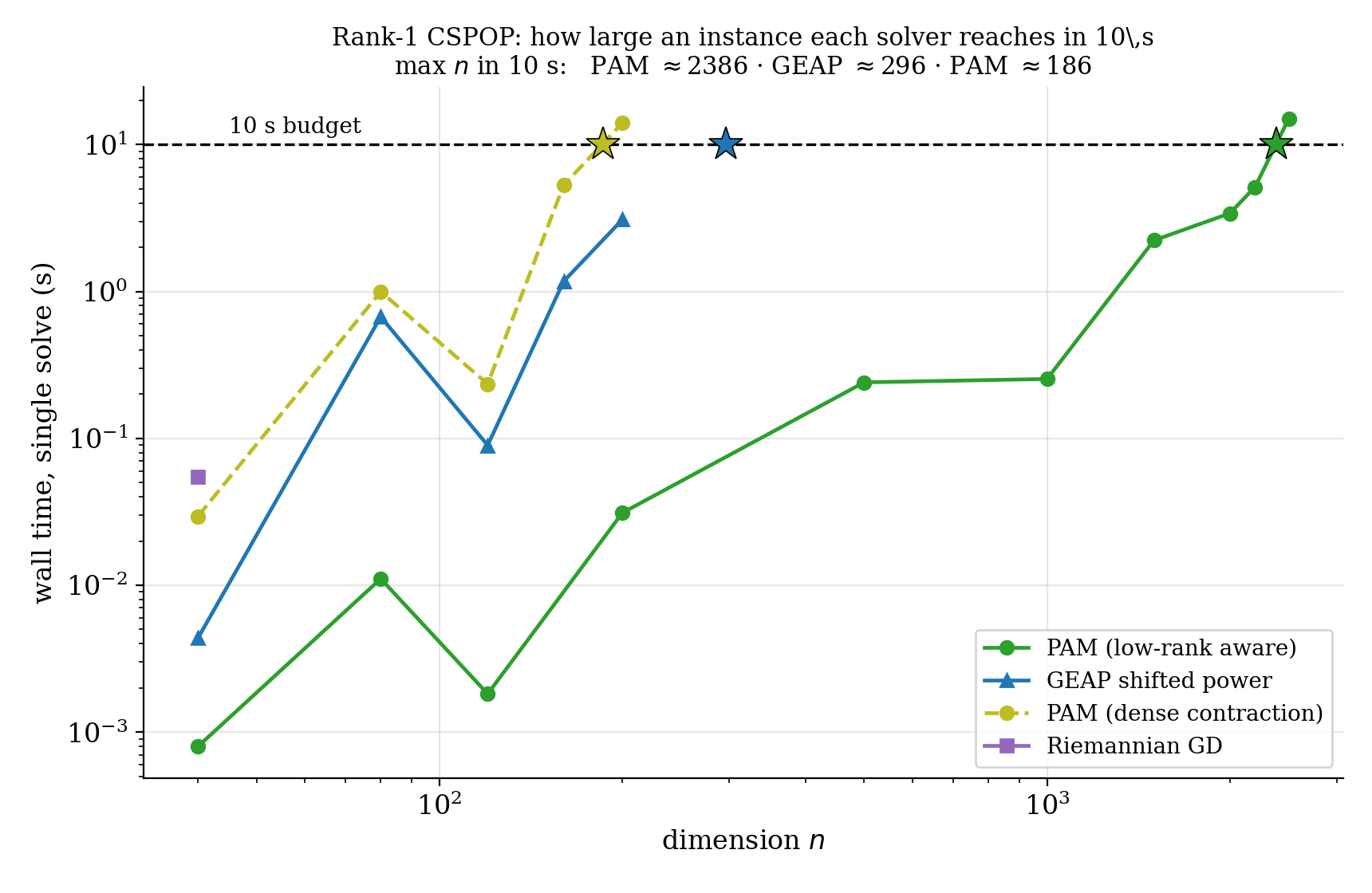}
\caption{{Rank-$1$ CSPOP: single-solve wall time versus dimension $n$ for
each sphere solver, with a $10$\,s budget (dashed) and the maximum $n$ reachable within
it (stars). Exploiting the rank-$1$ structure, PAM reaches $n\approx 2386$, against
$n\approx 154$--$296$ for the dense-contraction solvers.}}
\label{fig:cspop-lowrank}
\end{figure}

\subsection{The practical Ada--HTM method}

{Our implementation, which we call the \emph{practical Ada--HTM variant}, follows
\Cref{HTM algo} with one modified line. In the analysis, the convex regime is
triggered by $\lambda_{\min}(\mathbf H_k)\ge\epsilon^b$; in practice this trigger
misclassifies weakly convex valley floors ($0<\lambda_{\min}(\mathbf H_k)<\epsilon^b$)
as nonconvex, so we relax it to $\lambda_{\min}(\mathbf H_k)\ge0$. A near-singular
Hessian is safeguarded by the modified-Hessian shift
$\mathbf H_k+\max\{0,\tau-\lambda_{\min}(\mathbf H_k)\}\mathbf I$ with
$\tau=10^{-8}$, which keeps the shifted Hessian positive definite with margin
$\tau$. All three regimes (far-field, near-stationary nonconvex, convex) are
retained in the implementation. We note that \Cref{thm:overall-complexity}
applies to \Cref{HTM algo} with the trigger as analyzed; it does not directly
cover the relaxed practical trigger.}

\paragraph{An example for high-order trust-region: the Chebyshev--Rosenbrock problem}
\label{sec:exp-cheb}

We now evaluate the outer method, Ada--HTM (\Cref{HTM algo}) with the
CSPOP subproblem solved by PAM on a classical hard
nonconvex test problem by Nesterov.

The Chebyshev--Rosenbrock function was introduced by
Nesterov~\cite{jarre2011nesterov,nesterov2008private} and it is defined by
{
\[
f_r(\x)
= 0.25 (x_1 - 1)^2
  + \sum_{i=1}^{n-1} (x_{i+1} - 2x_i^2 + 1)^2 ,
\]
}
a fourth-degree polynomial with a global minimizer $x_{\text{glob}}^\ast = (1,\ldots,1)^T$ with $f_r(x^\ast)=0$.
At any other $x\in\mathbb{R}^n$, the gradient {$\nabla f_r(\x)$} is nonzero. The Chebyshev--Rosenbrock function is a challenging benchmark for nonlinear optimization.
Although the function has a unique stationary point (i.e., the global minimizer), the function contains many
\emph{near-stationary} regions with very small gradients, leading to slow
convergence for first- and second-order methods.
Classical methods such as Newton's method and BFGS tend to follow a highly oscillatory manifold before reaching the minimizer~\cite{gurbuzbalaban2012nesterov}.
This is precisely the setting in which third-order information matters.

We compare the practical \texttt{Ada-HTM} with gradient descent (GD), Newton's
method, \texttt{ARC}, and an exact second-order trust-region method (\texttt{TR}) from
the hard start $\x_0=(-1,1,\dots,1)^\top$.  The stopping tolerance is
$\epsilon_g=10^{-7}$ for $n\le8$, $10^{-9}$ for $n=9$, and $10^{-10}$ for $n\ge10$;
all methods use matched \textsf{Python} implementations{, with per-dimension
outer-iteration budgets shared by ARC, TR, and Ada--HTM ($8{,}000$ for $n\le8$, rising to
$1.3\times10^{5}$ at $n=12$) and fixed budgets of $20{,}000$ and $5{,}000$ iterations for
gradient descent and Newton's method, respectively; all runs use a fixed random seed}.

\begin{figure}[!htbp]
\centering
\includegraphics[width=\textwidth]{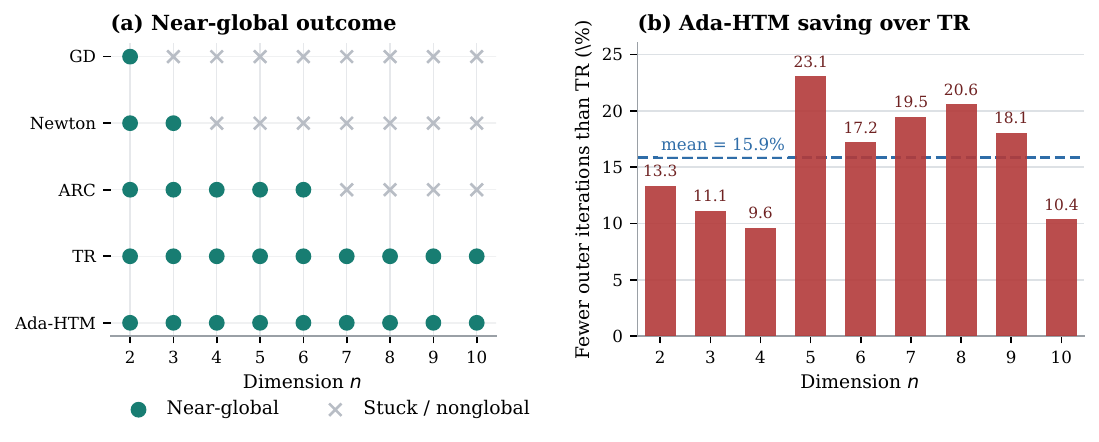}
\caption{{Chebyshev--Rosenbrock from the hard start. Left: whether
each method reaches the near-global basin
($\|\x^k-\x^\ast_{\rm glob}\|<5\times10^{-2}$) for $n=2,\ldots,10$.
Right: percentage reduction in outer iterations of Ada--HTM relative to TR,
$100(N_{\rm TR}-N_{\rm Ada})/N_{\rm TR}$; the mean saving is $15.9\%$.}}
\label{fig:cheb-summary}
\end{figure}

{
The left panel of \Cref{fig:cheb-summary} is deliberately an outcome plot:
it exposes the robustness difference more clearly than the full numerical
table. GD reaches the near-global basin only for $n=2$, Newton for $n=2,3$,
and ARC for $n=2,\ldots,6$. For $n=7,8,9,10$, ARC stops outside that basin
after $2391$, $5671$, $12667$, and $28289$ outer iterations, respectively.
TR and Ada--HTM are the only methods that reach the near-global basin at
every tested dimension $n=2,\ldots,10$. Because the global minimizer is the
only exact stationary point of this test, we do not interpret the failed
runs as convergence to another saddle or local minimum. Numerically, they
stop or exhaust the budget on a flat, weak-curvature portion of the curved
valley where the gradient is small but nonzero.

Among the two robust methods, Ada--HTM uses fewer outer iterations at every
dimension. The saving ranges from $9.6\%$ to $23.1\%$ and averages $15.9\%$;
at $n=9$ the counts are $11097$ versus $13546$, and at $n=10$ they are
$33423$ versus $37289$. The early trajectories in \Cref{fig:cheb-early}
show the mechanism. When both the gradient and quadratic curvature carry
little directional information, the cubic boundary model identifies a
descent direction and the far-field radius permits a larger accepted move
across the plateau. Ada--HTM therefore enters the Rosenbrock valley before
TR, after which its convex Newton phase follows the valley efficiently.

Wall-clock times are broadly comparable. A TR iteration is cheaper because
it uses only a quadratic model, whereas Ada--HTM pays for third-order
contractions and PAM sweeps. On this ill-conditioned valley, however, the
reduction in outer iterations largely offsets the extra work. For $n\ge11$,
all tested methods become trajectory-sensitive or fail to reach a reliable
near-global solution within the common budget; we therefore make no
scalability claim beyond $n=10$.
}
The full distances, iteration counts, and CPU times are reported in
\Cref{app:cheb-details}.

\begin{remark}[The assumptions in practice]
\label{rem:assumptions-numerics}
Assumptions~\ref{assumption Liptz}--\ref{assumption:bounded-derivatives} are standard smoothness and boundedness conditions
in evaluation-complexity analyses. They are not checked a priori when
running Ada--HTM. Along the trajectories generated in our experiments,
the relevant derivative operator norms remained finite. Except for the
deliberately badly scaled MGH instances, the largest observed derivative
and tensor norms were of order at most \(10^6\). The PAM-based CSPOP solver
remained numerically stable on the badly scaled instances, and these are
among the problems on which Ada--HTM performed particularly well.
\Cref{ass:weak-curvature} is needed only when an iteration enters the narrow
weak-curvature regime. An a posteriori examination of such iterations
showed that the cubic contraction along the signed reference direction
was nonzero, so no violation of the assumed nondegeneracy was observed.
\Cref{assumption epsilon small} is a technical small-tolerance condition that holds
automatically for all sufficiently small \(\epsilon\); its displayed
inequalities were also satisfied a posteriori for the tolerances and
constants observed in our experiments. The local-refinement condition
of \Cref{ass:local-refinement} is relevant only after entering a locally
positive-curvature region, where the observed refinement steps exhibited
the expected rapid convergence.
The reported implementation uses the candidate returned directly by PAM
and does not test these assumptions before taking a step. It also does not
apply the reference-step comparison in~\eqref{eq:safeguarded-boundary-step}.
That comparison is introduced to guarantee the model-decrease inequality
used in the outer complexity proof without requiring a globally optimal
solution of the boundary subproblem. Thus, the experiments provide
a posteriori evidence that the assumptions are compatible with the tested
problems, but do not remove the conditional nature of
\Cref{thm:overall-complexity}.
\end{remark}

\subsection{Robustness across the Mor\'e--Garbow--Hillstrom test set}
\label{sec:exp-mgh}

To test whether the previous behaviour extends beyond one tailored example, we run
Ada--HTM unchanged on $34$ Mor\'e--Garbow--Hillstrom (MGH) problems of dimension
$n=2$--$12$~\cite{more1981testing,birgin2018fortran}.  All methods use the standard
starting point, stop at $\|\nabla f\|_2\le10^{-5}$, and share a budget of $2000$ outer
iterations{; Ada--HTM's inner PAM solve uses three random restarts with a
fixed seed (ARC and TR employ their standard deterministic subproblem solvers)}.  We compare Ada--HTM with \texttt{ARC} and exact
\texttt{TR} using Dolan--Mor\'e profiles of outer iterations and CPU time
~\cite{dolan2002benchmarking,gould2016note,cartis2023second,zhu2024global}; problem~\#11
is excluded because its nonsmooth term has no third derivative.

\begin{figure}[!htbp]
\centering
\includegraphics[width=\textwidth]{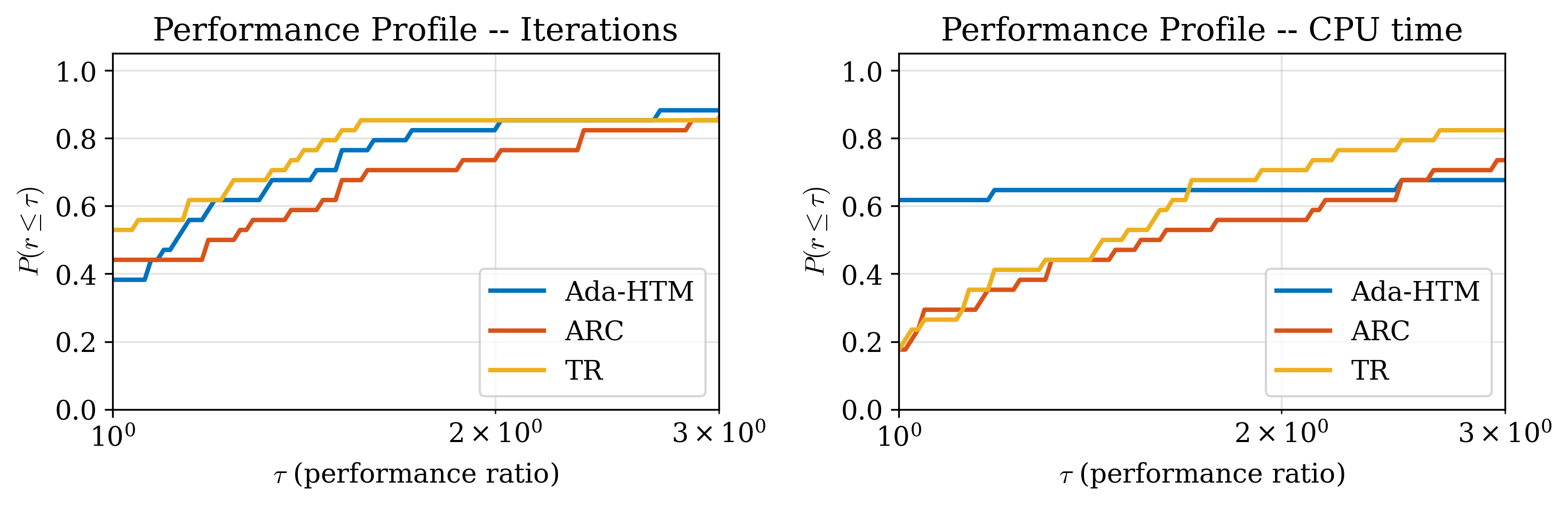}
\caption{Performance profiles on the $34$-problem MGH set for outer iterations
(left) and CPU time (right). All methods solve $32/34$ problems. Ada--HTM has the
highest iteration profile for $\tau\gtrsim2.5$ and is fastest on $62\%$ of the
problems at $\tau=1$; per-problem results are in \Cref{app:mgh}.}
\label{fig:mgh-profile}
\end{figure}

\begin{figure}[!htbp]
\centering
\includegraphics[width=.94\textwidth]{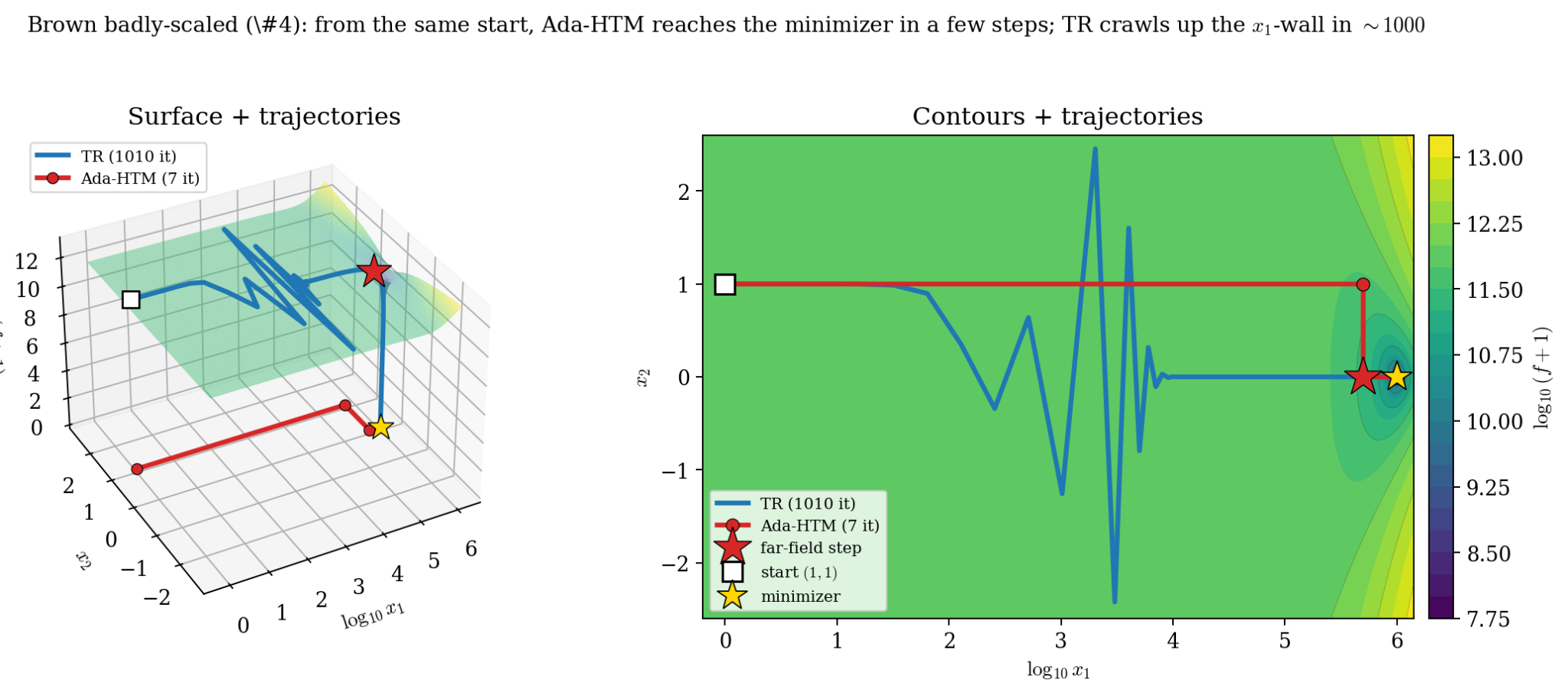}
\caption{{Brown badly-scaled (problem \#4) from the common start.
Ada--HTM reaches the minimizer in $7$ iterations, whereas TR requires $1010$
(ARC requires $1420$). The star marks the far-field step that corrects the
six-order-of-magnitude coordinate imbalance.}}
\label{fig:brown}
\end{figure}

The aggregate profiles show that the Chebyshev--Rosenbrock behaviour is not an isolated
effect.  All three methods solve $32/34$ problems, but Ada--HTM is especially effective on
badly-scaled and penalty-type objectives.  The clearest example is Brown badly-scaled
(\#4): Ada--HTM converges in $7$ iterations, compared with $1420$ for \texttt{ARC} and
$1010$ for \texttt{TR} --- {one far-field third-order step corrects the $10^6$ coordinate
mismatch, after which four convex Newton steps finish the solve (\Cref{fig:brown})}.  It also reduces Powell badly-scaled (\#3) from $1379$
\texttt{ARC} iterations to $93$, and succeeds on Brown--Dennis (\#16), where \texttt{TR}
stops above the requested tolerance.

{
On CPU time the methods are broadly comparable, with Ada--HTM fastest on a
majority of the set. Its advantage is driven mainly by ill-conditioned,
badly-scaled, and penalty-type objectives, where a small number of
informative third-order steps replaces hundreds or thousands of
radius-limited second-order iterations. The benefit is not uniform: when
third derivatives are obtained by finite differences, their cost can offset
an iteration reduction. Thus Ada--HTM is most valuable on the difficult
geometry that motivates the method, while remaining computationally
competitive on the rest of the set.}
The complete per-problem results are given in \Cref{app:mgh}.

\section{Conclusion}
\label{sec conclusion}

{
We developed a homogeneous-tensor route to radius-controlled high-order
Taylor steps. The representation is exact for arbitrary $p$ and turns the
model on a prescribed boundary into a spherical polynomial problem without
requiring the augmented tensor to be assembled. For the implemented cubic
case, an explicit quadratic shift gives global equivalence between the
inhomogeneous cubic problem and a three-block multilinear formulation. This
equivalence motivates a PAM solver with closed-form block updates. Its
guarantee is deliberately local: objective values decrease and accumulation
points are stationary; globality is asserted only with an independent
certificate.

Embedding these boundary candidates in Ada--HTM gives an outer framework
whose analysis separates far-field progress from near-stationary curvature
regimes. The reference safeguard is the critical inner--outer interface:
PAM proposes a step, while an explicit gradient or curvature direction
guarantees the comparison used in the proofs. Under the stated smoothness,
safeguarded-decrease, weak-curvature nondegeneracy, and local-refinement
conditions, Ada--HTM matches the AR$p$ evaluation-complexity order
$O(\epsilon^{-(p+1)/p})$.

The experiments support this two-level interpretation. PAM agrees with
order-$2$ moment--SOS certificates where certification is tractable and is
especially effective for low-rank tensors. In the outer tests, Ada--HTM is
competitive with exact trust-region and cubic-regularization methods, with
its clearest advantages on weak-curvature, ill-conditioned, and badly-scaled
problems. Extending the practical solver beyond $p=3$, relating PAM residuals
more sharply to outer stopping rules, and exploiting sparse and low-rank
tensor-free contractions are the most immediate next steps.
}

\medskip

\medskip

\noindent{\bf Acknowledgements:} This work was supported by the National Natural Science Foundation of China (12071249), Shandong Provincial Natural Science Foundation ZR2021JQ01,ZR2024MA003). 
The research of Wenqi Zhu was also supported by the Engineering and Physical Sciences
Research Council under grant EP/Y028872/1, \emph{Mathematical Foundations
of Intelligence: An ``Erlangen Programme'' for AI}.

\appendix

\section{Explicit Homogeneous Formulation}
\label{sec example}

\begin{remark} \textbf{Homogeneous formulation for third order Taylor polynomial:} Denote $f_0$, $\textbf g_k$, $\textbf H_k$, and $\textbf T_k$ for $f(\x_k) $ $\nabla f(\x_k)$, $\nabla^2 f(\x_k)$, and $\nabla^3 f(\x_k)$, respectively. Then, \begin{eqnarray} \label{third order taylor} T_3(\s) = f_0 + \textbf g_k^\top \s +\frac{1}{2}\textbf H_k[\s]^2 +\frac{1}{6}\textbf T_k[\s]^3 = \A[\tilde{\s}]^3, \end{eqnarray} where $\A\in\R^{(n+1)^3}$ is a third-order symmetric tensor and $\tilde{\s}=(1,\s^\top)^\top\in\mathbb R^{n+1}$. Specifically, using \eqref{formulation for pth order}, the nonzero entries of $\A$ are given by entries of $\textbf g_k$, $\textbf H_k$, and $\textbf T_k$ \[ \A_{000}= f_0,\qquad \A_{\pi(i00)}=\frac{1}{3}[\textbf g_k]_i,\qquad \A_{\pi(ij0)}=\frac{1}{6}[\textbf H_k]_{ij}, \] \[ {\A_{\pi(ij\iota)}=\frac{1}{6}[\textbf T_k]_{ij\iota}, \qquad i,j,\iota\in[n].} \]
{(The tensor contraction $\A[\tilde{\s}]^3$ already sums over all index permutations; since $\textbf T_k$ is symmetric, the cubic entries carry the uniform factor $1/6$ for every index pattern, in agreement with the general formula \eqref{formulation for pth order}.)} All other entries are zero, and $\pi$ denotes permutations of the indices. An explicit example is given at the end of this appendix. \label{remark Homogeneous formulation for third order Taylor polynomial} \end{remark} \begin{remark} \textbf{Homogeneous formulation for the AR$3$ subproblem.} If $p=3$ in \eqref{subprob}, then $ m_3(\s) = \textbf g_k^\top\s+\frac{1}{2}\textbf H_k[\s]^2+\frac{1}{6}\textbf T_k[\s]^3+\frac{\sigma}{4}\|\s\|^4. $ Define a fourth-order symmetric tensor $\M\in\R^{(n+1)^4}$ using \eqref{formulation for pth order}, the nonzero entries of $\M$ are given as follows: \[ \M_{0000}=f_0, \qquad \M_{\pi(i000)}=\frac14 g_i, \qquad {\M_{\pi(ij00)}=\frac{1}{12}H_{ij}\quad (i,j\in[n]),} \] \[ {\M_{\pi(ij\iota 0)}=\frac{1}{24}T_{ij\iota}\quad (i,j,\iota\in[n]),} \] and for the quartic regularization term, $ \M_{\pi(i i j j)}=\frac{\sigma}{12}\text{ } (i\neq j), \text{ } \M_{iiii}=\frac{\sigma}{4}. $ All other entries are zero. \end{remark}

\begin{example}
Suppose that $T_3(\x)\in\mathbb{R}_3[\x]$ is the polynomial
\[
T_3(\x)=x_1^3+x_2^3+x_3^3+x_1x_2+x_2x_3+x_1+x_2-3.
\]
Let $\T\in\mathbb{R}^{4\times4\times4}$ be a third-order symmetric tensor and
$\tilde{\x}=(1,\x^\top)^\top\in\mathbb{R}^{4}$.
By the construction in \eqref{formulation for pth order}, the tensor $\T$ has the following entries:
\[
\T_{0,:,:}=
\begin{pmatrix}
-3 & \frac{1}{3} & \frac{1}{3} & 0\\[4pt]
\frac{1}{3} & 0 & \frac{1}{6} & 0\\[4pt]
\frac{1}{3} & \frac{1}{6} & 0 & \frac{1}{6}\\[4pt]
0 & 0 & \frac{1}{6} & 0
\end{pmatrix},
\quad
\T_{1,:,:}=
\begin{pmatrix}
\frac{1}{3} & 0 & \frac{1}{6} & 0\\[4pt]
0 & 1 & 0 & 0\\[4pt]
\frac{1}{6} & 0 & 0 & 0\\[4pt]
0 & 0 & 0 & 0
\end{pmatrix},
\quad
\T_{2,:,:}=
\begin{pmatrix}
\frac{1}{3} & \frac{1}{6} & 0 & \frac{1}{6}\\[4pt]
\frac{1}{6} & 0 & 0 & 0\\[4pt]
0 & 0 & 1 & 0\\[4pt]
\frac{1}{6} & 0 & 0 & 0
\end{pmatrix},
\quad
\T_{3,:,:}=
\begin{pmatrix}
0 & 0 & \frac{1}{6} & 0\\[4pt]
0 & 0 & 0 & 0\\[4pt]
\frac{1}{6} & 0 & 0 & 0\\[4pt]
0 & 0 & 0 & 1
\end{pmatrix}.
\]
By direct computation, we obtain
$
T_3(\x)
=
\T[\tilde{\x}]^3
=
\sum_{i_1,i_2,i_3\in\{0,1,2,3\}}
\T_{i_1 i_2 i_3}
\tilde{x}_{i_1}\tilde{x}_{i_2}\tilde{x}_{i_3}.
$
\end{example}
\begin{remark}[Equivalence of the order-3 and order-4 formulations]
The cubic Taylor model admits two exact homogenisations: as an order-3 form
$T_3(s)=\A[\tilde s]^3$ with $\A\in\R^{(n+1)^3}$, and (after quartic regularisation)
as an order-4 form $m_3(s)=T_3(s)+\tfrac{\sigma}{4}\|s\|^4=\M[\tilde s]^4$ with
$\M\in\R^{(n+1)^4}$. Both identities hold for every $s$, so optimising the
homogeneous form is identical to optimising the model; the order is purely a
representational choice. By the maximum-block-improvement argument of
Theorems~\ref{thm1}--\ref{them2}, the associated multilinear (CSMOP) problems are
equivalent tightenings at either order for sufficiently large $\alpha$, and the
corresponding $3$-block and $4$-block PAM solvers agree to machine precision on our
tests. We therefore present the order-3 formulation without loss of generality; the
order-4 (even) form is convenient only when an even-degree tensor is required, e.g.\
for $Z$-eigenvalue/shifted-power solvers or for the SOS relaxation of the regularised
subproblem.
\end{remark}

\section{Additional Chebyshev--Rosenbrock results}\label{app:cheb-details}

\Cref{tab:cheb-compare-full} gives the numerical values summarized by
\Cref{fig:cheb-summary}.  A dash marks a final distance greater than $0.5$; bold
entries satisfy $\|\x^k-\x^\ast_{\rm glob}\|\le10^{-2}$.  Outer iterations count
all subproblem solves, and CPU times use matched \textsf{Python} implementations.

\begin{table}[!htbp]
\centering
\caption{Chebyshev--Rosenbrock from $\x_0=(-1,1,\dots,1)^\top$: final distance,
outer iterations, and CPU time.}
\label{tab:cheb-compare-full}
\renewcommand{\arraystretch}{1.12}
\resizebox{\textwidth}{!}{
\begin{tabular}{c | c c c c c | c c c | c c c}
\hline
 & \multicolumn{5}{c|}{distance $\|\x^k-\x^\ast_{\rm glob}\|$}
 & \multicolumn{3}{c|}{outer iterations} & \multicolumn{3}{c}{CPU time (s)}\\
$n$ & GD & Newton & \texttt{ARC} & \texttt{TR} & \texttt{Ada} & \texttt{ARC} & \texttt{TR} & \texttt{Ada} & \texttt{ARC} & \texttt{TR} & \texttt{Ada}\\
\hline
2 & \textbf{3.2e-4} & \textbf{2.6e-8} & \textbf{6.7e-10} & \textbf{2.8e-11} & \textbf{2.6e-8} & 14 & 15 & 13 & $<$0.1 & $<$0.1 & $<$0.1 \\
3 & 5.4e-2 & \textbf{1.3e-5} & \textbf{1e-8} & \textbf{5.7e-9} & \textbf{1.9e-9} & 38 & 36 & 32 & $<$0.1 & $<$0.1 & $<$0.1 \\
4 & -- & -- & \textbf{4.3e-7} & \textbf{1.4e-7} & \textbf{5.5e-8} & 117 & 83 & 75 & $<$0.1 & $<$0.1 & 0.2 \\
5 & -- & -- & \textbf{8.1e-4} & \textbf{2.6e-6} & \textbf{3.6e-7} & 440 & 260 & 200 & 0.1 & $<$0.1 & $<$0.1 \\
6 & -- & -- & \textbf{7.8e-3} & \textbf{2.2e-5} & \textbf{3.9e-6} & 1218 & 639 & 529 & 0.2 & $<$0.1 & $<$0.1 \\
7 & -- & -- & -- & \textbf{5.8e-4} & \textbf{1.9e-3} & 2391 & 1772 & 1427 & 0.5 & 0.1 & 0.2 \\
8 & -- & -- & -- & \textbf{9.5e-3} & \textbf{3.9e-3} & 5671 & 5034 & 3998 & 1.0 & 0.3 & 0.4 \\
9 & -- & -- & -- & \textbf{2.4e-5} & \textbf{3e-3} & 12667 & 13546 & 11097 & 2.5 & 0.7 & 1.1 \\
10 & -- & -- & -- & \textbf{5.9e-5} &\textbf{ 1.5e-4} & 28289 & 37289 & 33423 & 5.9 & 2.4 & 2.9 \\
\hline
\end{tabular}}
\end{table}

\Cref{fig:cheb-early} resolves the two sources of the iteration advantage.  The
far-field third-order step gives Ada--HTM an early lead when leaving the plateau,
while its convex Newton phase continues to gain inside the valley.

\begin{figure}[!htbp]
\centering
\includegraphics[width=\textwidth]{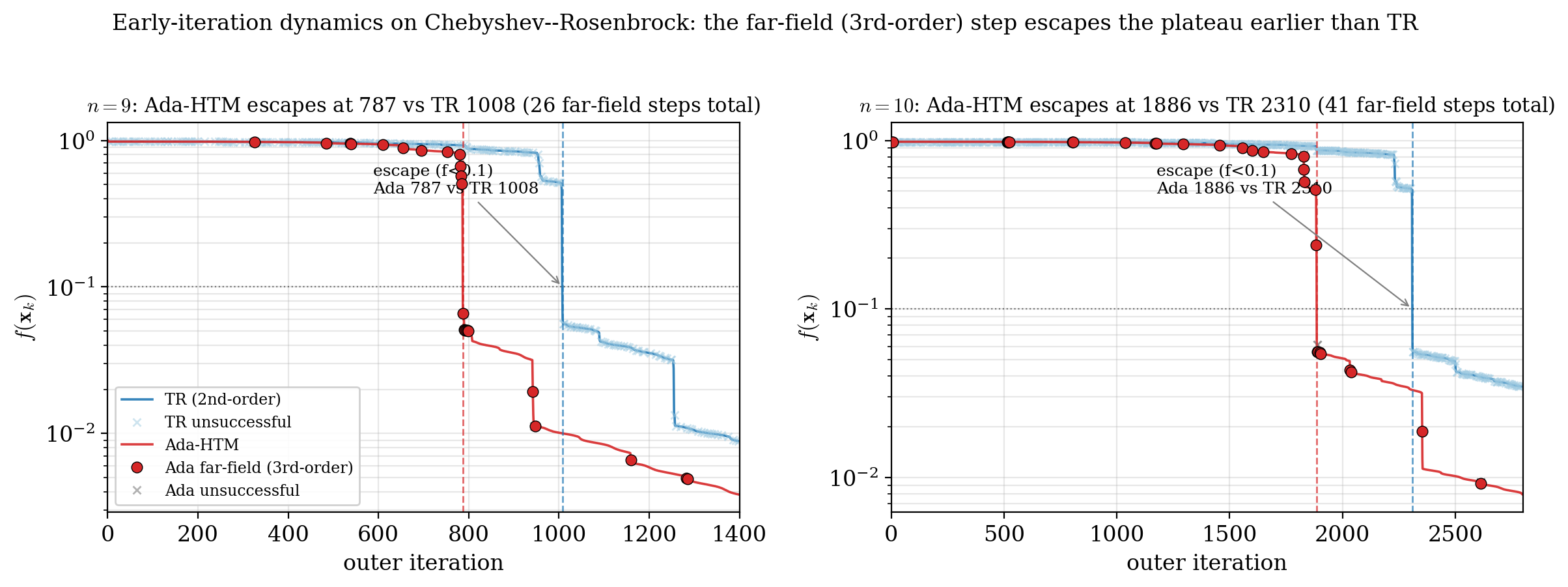}
\caption{Early Chebyshev--Rosenbrock dynamics for $n=9$ (left) and $n=10$
(right). Ada--HTM's far-field steps are circles and rejected \texttt{TR} trials are
crosses. Ada--HTM reaches $f<0.1$ at iterations $787$ and $1886$, compared with
$1008$ and $2310$ for \texttt{TR}.}
\label{fig:cheb-early}
\end{figure}

\section{Per-problem results on the MGH test set}\label{app:mgh}

\Cref{tab:mgh} reports, for every problem of the $34$-problem
Mor\'e--Garbow--Hillstrom set used in \Cref{sec:exp-mgh}, the number of outer iterations
(\emph{it}), the number of function/derivative evaluations (\emph{ev}), and the CPU time (s)
for Ada--HTM, \texttt{ARC} and \texttt{TR}, from the standard starting point with stopping
rule $\|\nabla f\|_2\le10^{-5}$ and a budget of $2000$ iterations. A superscript $^{\ast}$
marks a run that did not reach the tolerance (Meyer \#10 for all three; Osborne-1 \#17 for
Ada--HTM and \texttt{ARC}; Brown--Dennis \#16 for \texttt{TR}); each method solves $32/34$.
For problems \#19--35 the third derivative is finite-differenced, so the CPU of the
$\nabla^3 f$-using method (Ada--HTM) is inflated on a few of them (most visibly \#19 and
\#35); the iteration and evaluation counts are unaffected.

\begin{table}[!htbp]
\centering
\caption{Per-problem outer iterations (\emph{it}), function/derivative
evaluations (\emph{ev}) and CPU time (s) on the $34$-problem MGH set for Ada--HTM,
\texttt{ARC} and \texttt{TR}. A superscript $^{\ast}$ marks a run that exhausted the budget
without reaching $\|\nabla f\|_2\le10^{-5}$.}
\label{tab:mgh}
\resizebox{\textwidth}{!}{
\begin{tabular}{r l c | r r r | r r r | r r r}
\hline
 & & & \multicolumn{3}{c|}{Ada--HTM} & \multicolumn{3}{c|}{\texttt{ARC}} & \multicolumn{3}{c}{\texttt{TR}}\\
\# & problem & $n$ & it & ev & s & it & ev & s & it & ev & s\\
\hline
1 & Rosenbrock & 2 & 22 & 145 & $<$0.1 & 33 & 74 & $<$0.1 & 25 & 75 & $<$0.1 \\
2 & Freudenstein--Roth & 2 & 7 & 41 & $<$0.1 & 11 & 36 & $<$0.1 & 8 & 27 & $<$0.1 \\
3 & Powell badly-scaled & 2 & 93 & 597 & 0.3 & 1379 & 4130 & 0.3 & 114 & 329 & $<$0.1 \\
4 & Brown badly-scaled & 2 & 7 & 41 & $<$0.1 & 1420 & 4263 & 0.3 & 1010 & 3033 & 0.1 \\
5 & Beale & 2 & 12 & 71 & $<$0.1 & 9 & 24 & $<$0.1 & 7 & 23 & $<$0.1 \\
6 & Jennrich--Sampson & 2 & 10 & 59 & $<$0.1 & 9 & 30 & $<$0.1 & 9 & 30 & $<$0.1 \\
7 & Helical valley & 3 & 13 & 73 & $<$0.1 & 21 & 48 & $<$0.1 & 9 & 29 & $<$0.1 \\
8 & Bard & 3 & 9 & 53 & $<$0.1 & 9 & 30 & $<$0.1 & 14 & 45 & $<$0.1 \\
9 & Gaussian & 3 & 2 & 11 & $<$0.1 & 1 & 6 & $<$0.1 & 1 & 6 & $<$0.1 \\
10 & Meyer & 3 & 2000$^{\ast}$ & 149468 & 2.7 & 2000$^{\ast}$ & 5959 & 1.1 & 254$^{\ast}$ & 733 & 0.1 \\
12 & Box-3D & 3 & 16 & 95 & $<$0.1 & 32 & 89 & $<$0.1 & 16 & 51 & $<$0.1 \\
13 & Powell singular & 4 & 16 & 95 & $<$0.1 & 15 & 48 & $<$0.1 & 15 & 48 & $<$0.1 \\
14 & Wood & 4 & 43 & 261 & $<$0.1 & 51 & 134 & $<$0.1 & 43 & 126 & $<$0.1 \\
15 & Kowalik--Osborne & 4 & 12 & 68 & $<$0.1 & 8 & 23 & $<$0.1 & 8 & 25 & $<$0.1 \\
16 & Brown--Dennis & 4 & 9 & 53 & $<$0.1 & 8 & 27 & $<$0.1 & 10$^{\ast}$ & 35 & $<$0.1 \\
17 & Osborne-1 & 5 & 2000$^{\ast}$ & 12956 & 6.2 & 2000$^{\ast}$ & 5955 & 2.3 & 32 & 92 & $<$0.1 \\
18 & Biggs EXP6 & 6 & 70 & 383 & 0.2 & 26 & 77 & $<$0.1 & 38 & 105 & $<$0.1 \\
19 & Osborne-2 & 11 & 21 & 120 & 15.9 & 20 & 57 & 0.8 & 16 & 49 & 0.8 \\
20 & Watson & 6 & 12 & 71 & $<$0.1 & 16 & 51 & 0.1 & 11 & 36 & $<$0.1 \\
21 & Extended Rosenbrock & 10 & 22 & 145 & $<$0.1 & 26 & 71 & $<$0.1 & 23 & 70 & $<$0.1 \\
22 & Extended Powell singular & 12 & 16 & 95 & $<$0.1 & 16 & 51 & $<$0.1 & 16 & 51 & $<$0.1 \\
23 & Penalty I & 10 & 34 & 217 & $<$0.1 & 79 & 138 & $<$0.1 & 45 & 124 & $<$0.1 \\
24 & Penalty II & 10 & 90 & 607 & 0.1 & 255 & 382 & 0.2 & 111 & 315 & 0.2 \\
25 & Variably dimensioned & 10 & 15 & 89 & $<$0.1 & 14 & 45 & $<$0.1 & 14 & 45 & $<$0.1 \\
26 & Trigonometric & 10 & 8 & 46 & 0.8 & 11 & 32 & 0.1 & 11 & 34 & 0.1 \\
27 & Brown almost-linear & 10 & 8 & 47 & $<$0.1 & 5 & 18 & $<$0.1 & 7 & 24 & $<$0.1 \\
28 & Discrete boundary-value & 10 & 3 & 17 & $<$0.1 & 3 & 12 & $<$0.1 & 2 & 9 & $<$0.1 \\
29 & Discrete integral-eqn & 10 & 4 & 23 & $<$0.1 & 3 & 12 & $<$0.1 & 3 & 12 & $<$0.1 \\
30 & Broyden tridiagonal & 10 & 6 & 35 & $<$0.1 & 5 & 18 & $<$0.1 & 5 & 18 & $<$0.1 \\
31 & Broyden banded & 10 & 8 & 47 & $<$0.1 & 7 & 24 & $<$0.1 & 7 & 24 & $<$0.1 \\
32 & Linear (full rank) & 10 & 2 & 11 & $<$0.1 & 6 & 21 & $<$0.1 & 3 & 12 & $<$0.1 \\
33 & Linear (rank 1) & 10 & 25 & 124 & 0.3 & 2 & 9 & $<$0.1 & 6 & 19 & $<$0.1 \\
34 & Linear (rank 1, 0 cols) & 10 & 22 & 101 & 0.1 & 2 & 9 & $<$0.1 & 7 & 21 & $<$0.1 \\
35 & Chebyquad & 8 & 19 & 104 & 4.9 & 30 & 51 & 0.2 & 16 & 46 & 0.4 \\
\hline
\end{tabular}}
\end{table}

\begin{remark}[On the fairness of the comparison]\label{rem:fairness}
All methods use identical starting points, the same outer stopping tolerance
$\|\nabla f\|\le\epsilon_g$, and {the per-dimension iteration budgets reported in
\Cref{sec:exp-cheb}}. Two implementation
details are noted for reproducibility. {(i) In \texttt{Ada-HTM} the CSPOP subproblem
is \emph{warm-started} along $\pm v_{\min}(\mathbf H_k)$, the weakest-curvature
direction of the current Hessian; \texttt{ARC} and \texttt{TR} initialize their inner
solves from the current iterate in the standard way. The eigenvector used by the warm
start is computed from the same Hessian available to all methods, and the cost of
computing it is included in \texttt{Ada-HTM}'s reported CPU time; we note explicitly
that the initializations are not identical in information content.} (ii) The
reported iteration count (\texttt{nit}) is the number of \emph{outer} iterations;
inner-solver work (PAM sweeps, the moment-SDP, or the secular/trust-region solve) is
counted separately.
\end{remark}

\paragraph{Brown badly-scaled.}
For $f(\x)=(x_1-10^{6})^2+(x_2-2\times10^{-6})^2+(x_1x_2-2)^2$,
\Cref{fig:brown} {(in \Cref{sec:exp-mgh})} shows that one far-field step moves $x_2$ from $1$ to approximately
$2\times10^{-3}$, after which four convex Newton steps converge to the minimizer.

\section{Details for Application I (the CP-ball problem)}\label{app:cpball}

This appendix contains the complete reduction of the CP-ball problem \eqref{e19} to a
CSPOP, the supporting lemma and the proof of \Cref{them5}, the inner algorithm, the
parameter-sensitivity study, and the additional timing tables and large-scale wall-time
figure referenced from \Cref{sec:exp-cspop}.

\paragraph{Reduction to a CSPOP.}
{Introducing a slack variable $\theta$ with $\|\x\|^2+\theta^2=1$ makes \eqref{e19} equivalent to
the sphere-constrained problem} (the slack is written $\theta$ because $\t$ denotes the block triple of \Cref{sec:CSPOP})
\begin{equation}\label{e20}
\min_{\x \in \mathbb{R}^n,\,\theta\in\mathbb{R}}~ f(\x)=\A\x^3+\x^\top B\x+\c^\top \x
\quad \textup{s.t.}~ \|\x\|^2+\theta^2=1 .
\end{equation}
Writing $\v=(\theta,\x^\top)^\top\in\mathbb{R}^{n+1}$ and subtracting the constant
$\alpha(\|\x\|^2+\theta^2)=\alpha$, the shifted objective $h(\v)=f(\x)-\alpha\|\v\|^2$ turns
\eqref{e20} into the CSPOP
\begin{equation}\label{e22}
\min_{\v \in \mathbb{R}^{n+1}}~ h(\v)=\Big\langle\T_h,\,\tilde\v\circ\tilde\v\circ\tilde\v\Big\rangle
\quad \textup{s.t.}~ \|\v\|^2=1,
\end{equation}
where $\tilde\v=(1,\v^\top)^\top$ and $\T_h\in\mathbb{R}^{(n+2)^3}$ is the coefficient
tensor of $h$; this is exactly the CSPOP form~\eqref{e1 cspop} of
\Cref{sec:CSPOP}, in dimension $n+1$. Its associated multilinear problem (CSMOP) is
\begin{equation}\label{e23}
\min_{\u,\v,\w \in \mathbb{R}^{n+1}}~ F_h(\u,\v,\w)=\Big\langle\T_h,\,\tilde\u\circ\tilde\v\circ\tilde\w\Big\rangle
\quad \textup{s.t.}~ \|\u\|=\|\v\|=\|\w\|=1 .
\end{equation}

\begin{remark}[Solving the CP-ball problem with PAM]
Problem~\eqref{e19} is solved by applying the PAM scheme of \Cref{alg1} to the
CSMOP~\eqref{e23}; the only changes are that the cubic data are those of the shifted
polynomial $h$, and that the solution is read off the $\x$-block of the recovered
$\v$. The full procedure is stated as \Cref{alg:cpball} below, and is
otherwise identical to \Cref{alg1}.
\end{remark}

The PAM solver has two parameters: the shift $\alpha$, which makes the homogenisation
exact, and the proximal weights $\beta_i$, which regularise the block updates. A
sensitivity study over both parameters is reported in \Cref{fig:cspop-alpha-gamma} below;
the solver is robust across a wide range, and we use $\alpha=3\sqrt2\,\|\T\|_F$ and
$\beta_i=1$ throughout. In particular, \Cref{lema2} below shows that any
$\alpha\ge 3\|\A\|_F+\|B\|_F$ makes $\nabla^2 h$ negative semidefinite on the unit ball,
which guides this choice and yields the following equivalence, whose proof is given after
\Cref{lema2}.
\begin{theorem}\label{them5}
Let $\alpha\geq3\|\A\|_F+\|B\|_F$. Then \eqref{e22} and \eqref{e23} are equivalent, and
if $\u^*,\v^*,\w^*\in\mathbb{R}^{n+1}$ are optimal for \eqref{e23} then they are optimal
for \eqref{e22}, i.e. $h(\u^*)=h(\v^*)=h(\w^*)=F_h(\u^*,\v^*,\w^*)$.
\end{theorem}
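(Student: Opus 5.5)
The plan is to repeat the argument of \Cref{thm1} and \Cref{them2} for the shifted polynomial $h$. The only problem-specific input is a replacement for \Cref{lema1}, namely \Cref{lema2}. First I would write $h$ as a symmetric cubic form $\T_h$ in the augmented vector $\tilde\v=(1,\v^\top)^\top\in\R^{n+2}$. With $\v=(\theta,\x^\top)^\top$, the cubic part $\A\x^3$ only involves the $\x$-coordinates. The quadratic part $\x^\top B\x-\alpha\|\v\|^2$ is placed in the entries $(\T_h)_{\pi(0ij)}$, and the linear part $\c^\top\x$ in the entries $(\T_h)_{\pi(00i)}$.

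The key inequality I need is the analogue of \eqref{suff cond}: for every $\tilde\w=(1,\w^\top)^\top$ with $\|\w\|=1$ and every $\tilde\u=(0,\u^\top)^\top$, the form $\langle\T_h,\tilde\w\circ\tilde\u\circ\tilde\u\rangle$ should be $\le 0$. Computing it, the $\tilde\w$-slot either picks up the constant coordinate $1$, which contributes the Hessian of the quadratic part, or it picks up $\w$, which contributes a cubic-term contraction. This gives $\frac12\u^\top\nabla^2 h(\w)\u$ up to a positive factor, so the condition is exactly $\nabla^2h(\w)\preceq 0$ on the unit sphere. That statement is \Cref{lema2} (which I am taking as given). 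Explicitly, $\nabla^2 h(\w)=6\A\w_x+2B-2\alpha I$, where $\w_x$ denotes the $\x$-block of $\w$. This is bounded above by $(6\|\A\|_F+2\|B\|_F-2\alpha)I$, which is $\preceq 0$ when $\alpha\ge 3\|\A\|_F+\|B\|_F$. I will need to check carefully that the factors of $1/3$ and $1/6$ in the tensor entries match these constants, since that is where the hypothesis on $\alpha$ enters.

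With this inequality established, the argument of \Cref{thm1} goes through verbatim with $\T-\alpha\bar\T$ replaced by $\T_h$. Expanding $0\ge\langle\T_h,\tilde\u\circ(\tilde\u-\tilde\v)\circ(\tilde\u-\tilde\v)\rangle+2\langle\T_h,\tilde\v\circ(\tilde\u-\tilde\v)\circ(\tilde\u-\tilde\v)\rangle$ gives $h(\u)+2h(\v)\le 3F_h(\u,\v,\v)$. Expanding $0\ge\langle\T_h,\tilde\w\circ(\tilde\u-\tilde\v)\circ(\tilde\u-\tilde\v)\rangle$ gives $F_h(\w,\u,\u)+F_h(\w,\v,\v)\le 2F_h(\w,\u,\v)$. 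In both cases the differences $\tilde\u-\tilde\v$ have a zero first coordinate, which is exactly why only the restricted-direction inequality is needed. Together with the trivial reverse bounds obtained from diagonal choices $\u=\v=\w$, these give equal optimal values for \eqref{e22} and \eqref{e23}.

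Finally, I would recover optimality of each block as in \Cref{them2}. If $(\u^*,\v^*,\w^*)$ minimizes $F_h$, the second inequality forces $F_h(\u^*,\v^*,\v^*)=F_h(\u^*,\w^*,\w^*)=F_h^*$. The first inequality, combined with the minimality of the pair $(\u^*,\v^*)$, then yields $h(\u^*)=h(\v^*)=F_h^*$. Applying the same reasoning to the pair $(\u^*,\w^*)$ gives $h(\w^*)=F_h^*$. Since $F_h^*$ equals the optimal value of \eqref{e22}, all three blocks are optimal for \eqref{e22}. The main obstacle is the first step, verifying the restricted-curvature inequality with the stated constant. The subtle point is that the linear term $\c^\top\x$ must not contribute: that holds because it sits only in entries with two zero indices, which vanish when contracted against $\tilde\u\circ\tilde\u$.
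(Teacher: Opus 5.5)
Your proposal is correct and follows the paper's own route. \Cref{lema2} plays the role of \Cref{lema1}, and the expansions in the proofs of \Cref{thm1} and \Cref{them2} are then reused verbatim in dimension $n+1$. Your explicit identity $\langle\T_h,\tilde\w\circ\tilde\u\circ\tilde\u\rangle=\frac16\,\u^\top\nabla^2h(\w)\u$ for $\tilde\u=(0,\u^\top)^\top$, with the linear term dropping out, spells out the step the paper compresses into ``$h$ is concave on the feasible set.''
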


\begin{lemma}\label{lema2}
Let $\alpha\geq3\|\A\|_F+\|B\|_F$. Then, {for any $\v=(\theta,\x^\top)^\top$ with $\|\x\|\leq 1$ and $\theta\in\mathbb{R}$, the Hessian $\nabla^2h(\v)$} is negative semi-definite.
\end{lemma}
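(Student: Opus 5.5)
Since the tensor $\A$ (cubic data) and the matrix $B$ enter $h(\v)=f(\x)-\alpha\|\v\|^2=\A\x^3+\x^\top B\x+\c^\top\x-\alpha(\theta^2+\|\x\|^2)$ only through the $\x$-block, I will compute $\nabla^2 h(\v)$ directly in block form rather than through the homogenised tensor $\T_h$. The linear term $\c^\top\x$ has zero Hessian. The cubic term gives $\nabla^2_{\x\x}(\A\x^3)=6\,\A\x$, where $\A\x$ is the matrix $(\sum_l\A_{ijl}x_l)_{ij}$, using the symmetry of $\A$. The quadratic term gives $2B$ (after symmetrising $B$ if needed; I will assume $B$ symmetric as in the paper). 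The shift contributes $-2\alpha I_{n+1}$. So
\[
\nabla^2 h(\v)=\begin{pmatrix}-2\alpha & \0^\top\\ \0 & 6\,\A\x+2B-2\alpha I_n\end{pmatrix}.
\]
The off-diagonal block vanishes because $h$ depends on $\theta$ only through $-\alpha\theta^2$. The $(\theta,\theta)$ entry is $-2\alpha\le0$, so it remains to show that the lower block is negative semidefinite whenever $\|\x\|\le1$.

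For any $\u\in\R^n$ I will bound $\u^\top(6\A\x+2B)\u=6\langle\A,\x\circ\u\circ\u\rangle+2\u^\top B\u$ by Cauchy--Schwarz in the Frobenius inner product. This gives $|\langle\A,\x\circ\u\circ\u\rangle|\le\|\A\|_F\|\x\circ\u\circ\u\|_F=\|\A\|_F\|\x\|\|\u\|^2\le\|\A\|_F\|\u\|^2$, and $|\u^\top B\u|\le\|B\|_2\|\u\|^2\le\|B\|_F\|\u\|^2$. Hence
\[
\u^\top\nabla^2_{\x\x}h\,\u\le(6\|\A\|_F+2\|B\|_F-2\alpha)\|\u\|^2\le0
\]
exactly when $\alpha\ge3\|\A\|_F+\|B\|_F$. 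Since the matrix is block diagonal, combining the two blocks gives $\tilde\u^\top\nabla^2h(\v)\tilde\u\le0$ for every $\tilde\u=(u_0,\u^\top)^\top$.

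The main obstacle is bookkeeping of constants rather than the argument itself. First, I must make sure the convention $\A\x^3=\sum\A_{ijl}x_ix_jx_l$ really yields the factor $6$ (and not $6$ times some symmetrisation factor). Second, I must confirm that the rank-one identity $\|\x\circ\u\circ\u\|_F=\|\x\|\|\u\|^2$ is what produces the constant $3\|\A\|_F$ here, instead of the $3\sqrt2\|\T\|_F$ of \Cref{lema1}. That earlier $\sqrt2$ came from $\|\tilde\w\|=\sqrt2$, which does not arise here because the bound uses $\x$ itself with $\|\x\|\le1$, not the augmented vector. If $B$ is not assumed symmetric, I will replace it by $(B+B^\top)/2$, whose Frobenius norm is at most $\|B\|_F$, so the same bound holds.
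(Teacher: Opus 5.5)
Your proposal is correct and follows the paper's proof essentially line by line: the paper also writes $\nabla^2 h(\v)$ in block-diagonal form with blocks $-2\alpha$ and $6\A\x+2B-2\alpha I_n$, and bounds the quadratic form of the lower block by $(6\|\A\|_F+2\|B\|_F-2\alpha)\|\w\|^2$. Your Cauchy--Schwarz step $|\langle\A,\x\circ\u\circ\u\rangle|\le\|\A\|_F\|\x\|\|\u\|^2$ only makes explicit where the hypothesis $\|\x\|\le 1$ enters, which the paper leaves implicit.
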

\proof By a direct computation, we obtain that
$$
\nabla^2h(\v)=
\left(\begin{matrix}
-2\alpha & \0^\top \\
\0 & 6\A\x+2B-2\alpha I_n
\end{matrix}
\right).
$$
Since $\alpha\geq3\|\A\|_F+\|B\|_F$, for any $\bar{\v}=(\omega_0,\w^\top)^\top\in\mathbb{R}^{n+1}$ it holds that
$$
\begin{aligned}
{\bar{\v}^\top\nabla^2h(\v)\bar{\v}}&{=-2\alpha\omega_0^2+\w^\top( 6\A\x+2B-2\alpha I_n)\w}\\
&{\leq-2\alpha\omega_0^2+( 6\|\A\|_F+2\|B\|_F)\|\w\|^2-2\alpha\|\w\|^2\leq0,}
\end{aligned}
$$
and the desired result holds.
\qed

\medskip
\noindent{\textbf{Proof of \Cref{them5}.} By \Cref{lema2}, for
$\alpha\ge 3\|\A\|_F+\|B\|_F$ the shifted objective $h$ is concave on the feasible set,
so the homogenisation~\eqref{e22} is exact; the equivalence of \eqref{e22} and its
CSMOP~\eqref{e23}, and the recovery of the optimisers, then follow from the argument of
Theorems~\ref{thm1}--\ref{them2} applied in dimension $n+1$. \qed}

\begin{algorithm}[!htbp]
	\caption{(PAM for the CP-ball problem~\eqref{e19}); identical to \Cref{alg1} with the data of the shifted polynomial $h$.}\label{alg:cpball}
	\begin{algorithmic}[1]
		\STATE Let $\alpha\geq3\|\A\|_F+\|B\|_F$, {tolerance $\varepsilon_{\rm in}>0$, $\|{\bf u}^{(0)}\|=\|\v^{(0)}\|=\|\w^{(0)}\|=1$, proximal weights $\beta_i> 0$}$,\;(i=1,2,3)$.
		\FOR{$\kappa=0,1,2,\ldots$}
		\STATE Update $({\bf u}^{(\kappa+1)},\v^{(\kappa+1)},\w^{(\kappa+1)})$ sequentially via
		\begin{subnumcases}{\label{PAMA2}}
		{\bf u}^{(\kappa+1)}=\arg\min\limits_{\|{\bf u}\|=1}\left[F_h({\bf u},\v^{(\kappa)},\w^{(\kappa)})+{\frac{\beta_1}{2}}\|{\bf u}-{\bf u}^{(\kappa)}\|^2\right],\label{PAMA-4} \\
		\v^{(\kappa+1)}=\arg\min\limits_{\|\v\|=1}\left[F_h({\bf u}^{(\kappa+1)},\v,\w^{(\kappa)})+{\frac{\beta_2}{2}}\|\v-\v^{(\kappa)}\|^2\right], \label{PAMA-5}\\
		\w^{(\kappa+1)}=\arg\min\limits_{\|\w\|=1}\left[F_h({\bf u}^{(\kappa+1)},\v^{(\kappa+1)},\w)+{\frac{\beta_3}{2}}\|\w-\w^{(\kappa)}\|^2\right]. \label{PAMA-6}
		\end{subnumcases}
		\STATE {Set $\v_{\rm best}^{(\kappa+1)}:=\arg\min\left\{h({\bf u}^{(\kappa+1)}),h({\bf v}^{(\kappa+1)}),h({\bf w}^{(\kappa+1)})\right\}$ and write $\v_{\rm best}^{(\kappa+1)}=(\theta^{(\kappa+1)},{\x^{(\kappa+1)}}^{\top})^\top$.}
		\STATE {Stop when the maximum projected block KKT residual is at most $\varepsilon_{\rm in}$, and return $\x^{(\kappa+1)}$.}
		\ENDFOR
	\end{algorithmic}
\end{algorithm}

\begin{remark}[Tensor-free contraction]\label{rem:closed-form-contraction}
The augmented tensor is never assembled. Each block update of \Cref{alg1} uses the
closed-form contraction
\begin{equation}
\label{eq:closed-contraction}
(\mathbf 0,I_n)\,\T\,\tilde\y\,\tilde\z
\;=\;
\tfrac13\,\g \;+\; \tfrac16\,\H(\y+\z) \;+\; \tfrac16\,\Big(\T_0[\y,\z]\Big),
\qquad
\big(\T_0[\y,\z]\big)_i = \sum_{j,l=1}^n (\T_0)_{ijl}\,y_j z_l,
\end{equation}
where the shift contributes $-\tfrac{\alpha}{3}(\y+\z)$. Hence an update costs
$O(\mathrm{nnz}(\H)+\mathrm{nnz}(\T_0))$, which is linear in $n$ for the structured
Chebyshev--Rosenbrock tensor. Equation~\eqref{eq:closed-contraction} is the order-$3$
case of $(\mathbf0,I_n)\A\tilde\x^{p-1}=\tfrac1p\nabla T_{p-1}(\x)$.
\end{remark}

\begin{table}[!htbp]
\centering
\caption{Wall--clock time of PAM ($20$ starts) versus the order--$2$ SDP
relaxation; the rightmost columns show PAM alone at dimensions where the SDP is
intractable.}
\label{tab:cspop-speed}
\begin{tabular}{c|ccc|cccc}
\hline
$n$ & 4 & 8 & 10 & 20 & 40 & 60 & 80 \\
\hline
$t_{\mathrm{PAM}}$ (s) & $0.002$ & $0.004$ & $0.015$ & $0.10$ & $0.81$ & $3.0$ & $7.0$ \\
$t_{\mathrm{SDP}}$ (s) & $0.15$  & $0.88$  & $2.16$  & \multicolumn{4}{c}{intractable} \\
\hline
\end{tabular}
\end{table}

\begin{table}[!htbp]
\centering
\caption{PAM versus other sphere solvers at $n=8$ (global value from the
order--$2$ certificate); times are medians over $20$ random instances from the indefinite/dense class of \Cref{tab:cspop-structures}.}
\label{tab:cspop-rest}
\begin{tabular}{lccc}
\hline
method & reached global (multistart) & reached global (single start) & median time \\
\hline
\textbf{PAM} & $15/15$ & $16/20$ & $\mathbf{4}$\,ms \\
shifted power & $15/15$ & $16/20$ & $18$\,ms \\
Riemannian GD & $15/15$ & $16/20$ & $57$\,ms \\
order--$2$ SDP & (certifies) & --- & $885$\,ms \\
\hline
\end{tabular}
\end{table}

\begin{figure}[!htbp]
\centering
\includegraphics[width=\textwidth]{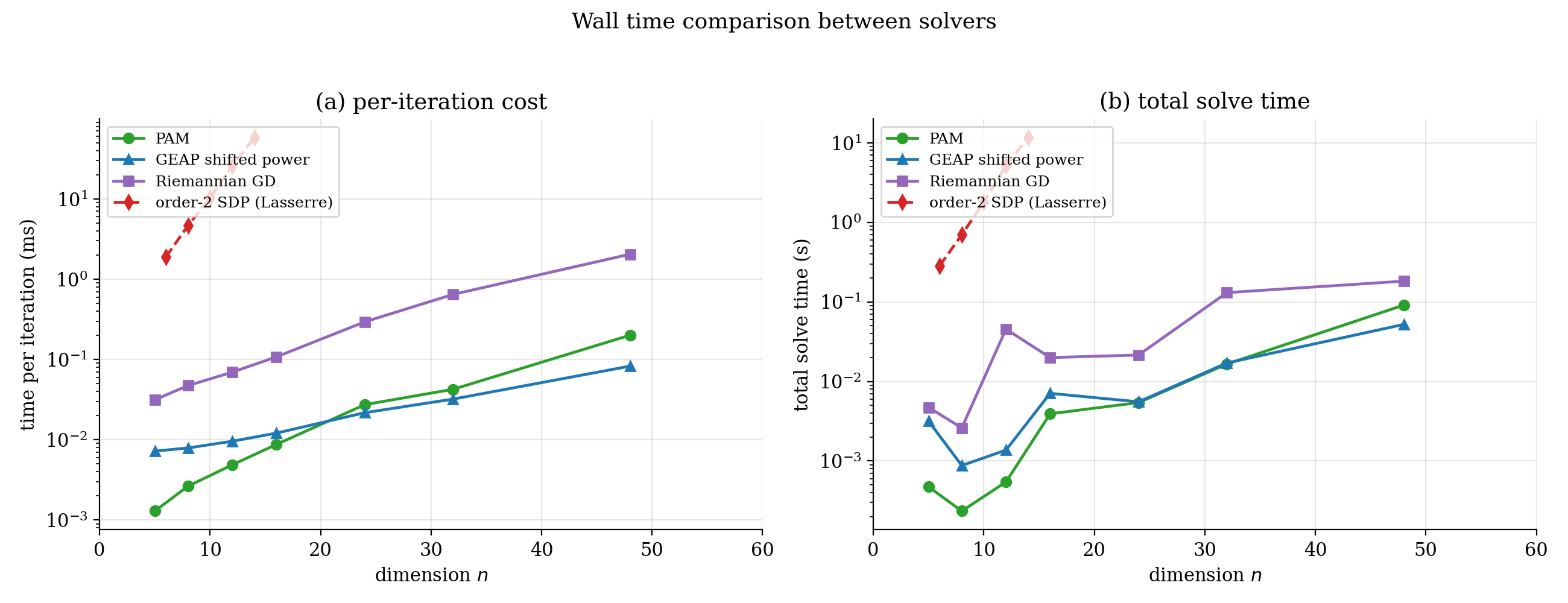}
\caption{Large-scale wall-time comparison on the CSPOP subproblem.
(a) Per-iteration cost: PAM (Numba-JIT) is lowest, about $5\times$ faster per step than
the shifted power method at small $n$, narrowing towards $n\approx 100$; Riemannian GD
is highest because each step runs an Armijo line search with several $O(n^3)$
evaluations. (b) Total solve time over $n$.
dimension.}
\label{fig:cspop-walltime}
\end{figure}

\begin{figure}[!htbp]
\centering
\includegraphics[width=0.8\textwidth]{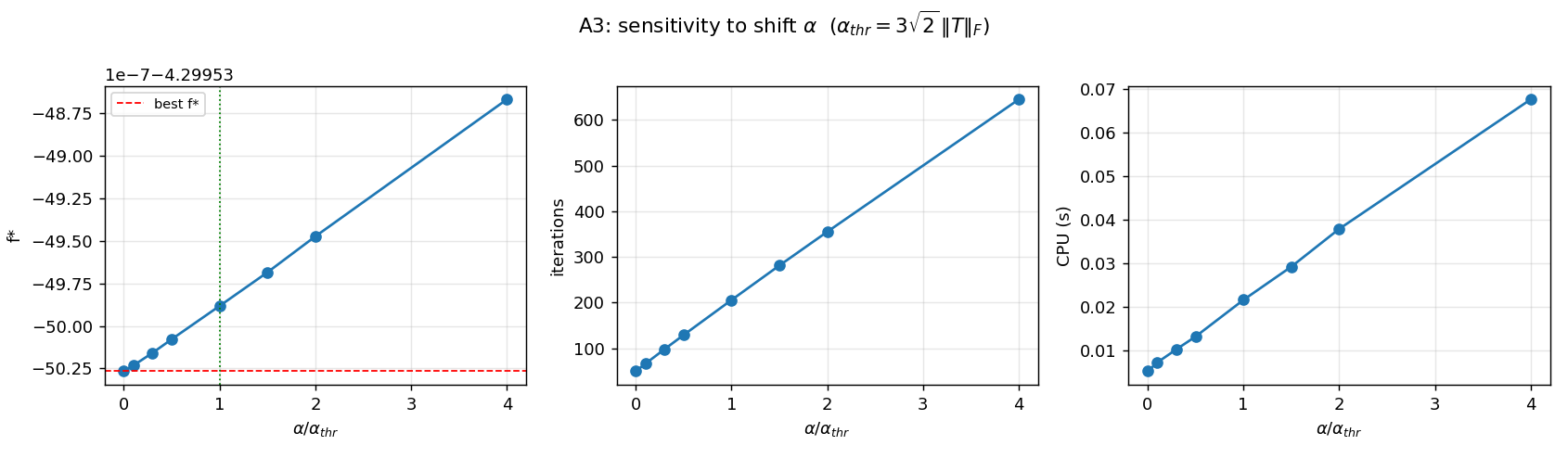}\hfill
\includegraphics[width=0.8\textwidth]{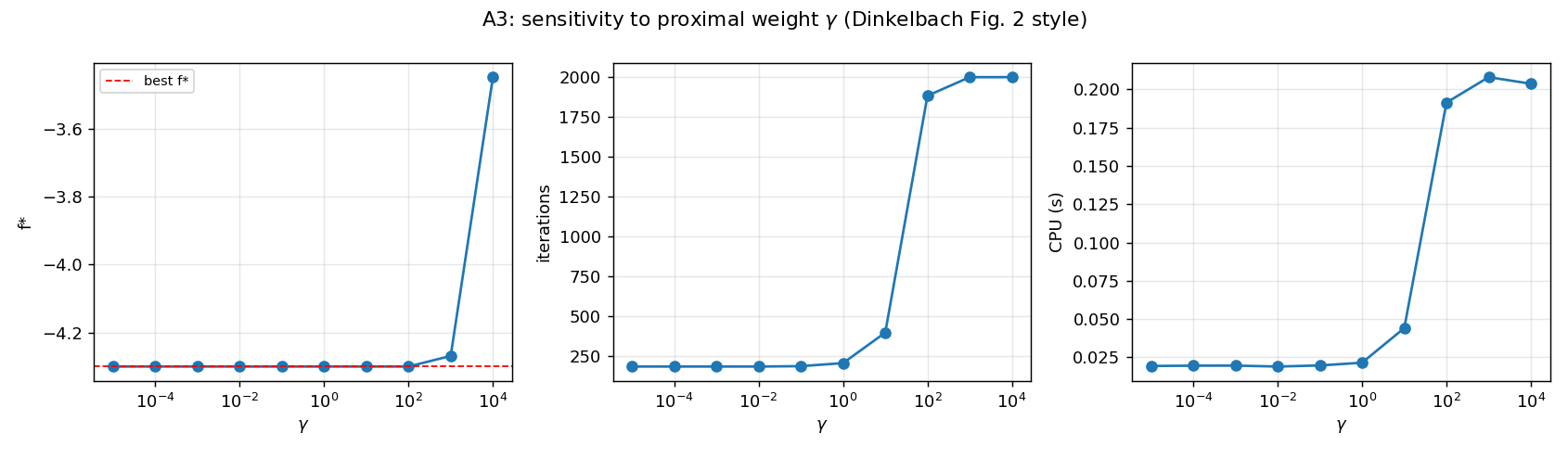}
\caption{Sensitivity of the PAM solver to the shift $\alpha$ (top plot) and the proximal
weights $\gamma_i$ (bottom plot) on the CP-ball problem. The solver is robust across a wide range
of both parameters; we use $\alpha=3\sqrt2\,\|\T\|_F$ and $\gamma_i=1$.}
\label{fig:cspop-alpha-gamma}
\end{figure}

\newpage
\clearpage

\scriptsize
\bibliographystyle{plainnat}
\bibliography{sample}

@article{kolda2014adaptive,
  title   = {An Adaptive Shifted Power Method for Computing Generalized Tensor Eigenpairs},
  author  = {Kolda, Tamara G. and Mayo, Jackson R.},
  journal = {SIAM Journal on Matrix Analysis and Applications},
  volume  = {35},
  number  = {4},
  pages   = {1563--1581},
  year    = {2014},
  doi     = {10.1137/140951758}
}

@book{absil2008optimization,
  title     = {Optimization Algorithms on Matrix Manifolds},
  author    = {Absil, P.-A. and Mahony, R. and Sepulchre, R.},
  year      = {2008},
  publisher = {Princeton University Press},
  address   = {Princeton, NJ}
}

@article{zhu2026sufficiently,
  title={Sufficiently Regularized Nonnegative Quartic Polynomials are Sum-of-Squares},
  author={Zhu, Wenqi and Cartis, Coralia},
  journal={arXiv preprint arXiv:2601.20418},
  year={2026}
}

@article{welzel2025local,
  title={Local Convergence of Adaptively Regularized Tensor Methods},
  author={Welzel, Karl and Liu, Yang and Hauser, Raphael A and Cartis, Coralia},
  journal={arXiv preprint arXiv:2510.25643},
  year={2025}
}

@article{silina2022unregularized,
  title={An unregularized third order Newton method},
  author={Silina, Olha and Zhang, Jeffrey},
  journal={arXiv preprint arXiv:2209.10051},
  year={2022}
}

@article{zhu2023cubic,
  title={Cubic-quartic regularization models for solving polynomial subproblems in third-order tensor methods},
  author={Cartis, Coralia and Zhu, Wenqi},
  journal={Mathematical Programming},
  pages={1--53},
  year={2025},
  publisher={Springer}
}

@inproceedings{zhu2022quartic,
  title={Quartic Polynomial Sub-problem Solutions in Tensor Methods for Nonconvex Optimization},
  author={Zhu, Wenqi and Cartis, Coralia},
  booktitle={NeurIPS 2022 Workshop},
  year={2022}
}

@article{cartis2023second,
  title={Second-order methods for quartically-regularised cubic polynomials, with applications to high-order tensor methods: C. Cartis, W. Zhu},
  author={Cartis, Coralia and Zhu, Wenqi},
  journal={Mathematical Programming},
  pages={1--47},
  year={2025},
  publisher={Springer}
}

@article{lasserre2001global,
  title={Global optimization with polynomials and the problem of moments},
  author={Lasserre, Jean B},
  journal={SIAM Journal on optimization},
  volume={11},
  number={3},
  pages={796--817},
  year={2001},
  publisher={SIAM}
}

@book{conn2000trust,
  title={Trust region methods},
  author={Conn, Andrew R and Gould, Nicholas IM and Toint, Philippe L},
  year={2000},
  publisher={SIAM}
}

@article{chen2025tensor,
  title={Tensor-based Dinkelbach method for computing generalized tensor eigenvalues and its applications},
  author={Chen, Haibin and Zhu, Wenqi and Cartis, Coralia},
  journal={Applied Numerical Mathematics},
  year={2026}
}

@techreport{nesterov2003random,
  title={Random walk in a simplex and quadratic optimization over convex polytopes},
  author={Nesterov, Yurii and others},
  year={2003},
  institution={CORE}
}

@article{luo2010semidefinite,
  title={A semidefinite relaxation scheme for multivariate quartic polynomial optimization with quadratic constraints},
  author={Luo, Zhi-Quan and Zhang, Shuzhong},
  journal={SIAM Journal on Optimization},
  volume={20},
  number={4},
  pages={1716--1736},
  year={2010},
  publisher={SIAM}
}

@article{zhang2012cubic,
  title   = {The Cubic Spherical Optimization Problem},
  author  = {Zhang, Xinzhen and Qi, Liqun and Ye, Yinyu},
  journal = {Mathematics of Operations Research},
  volume  = {37},
  number  = {2},
  pages   = {333--349},
  year    = {2012},
  doi     = {10.1287/moor.1110.0523}
}

@article{nesterov2006cubic,
  title={Cubic regularization of Newton method and its global performance},
  author={Nesterov, Yurii and Polyak, Boris T},
  journal={Mathematical Programming},
  volume={108},
  number={1},
  pages={177--205},
  year={2006},
  publisher={Springer}
}

@article{nesterov2022quartic,
  title={Quartic Regularity},
  author={Nesterov, Yurii},
  journal={arXiv preprint arXiv:2201.04852},
  year={2022}
}

@article{cartis2022evaluation,
  title={Evaluation complexity of algorithms for nonconvex optimization},
  author={Cartis, Coralia and Gould, Nicholas Ian Mark and Toint, Ph L},
  journal={MOS-SIAM Series on Optimization.},
  year={2022}
}

@article{more1981testing,
  title={Testing Unconstrained Optimization Software},
  author={Mor{\'e}, Jorge J and Garbow, Burton S and Hillstrom, Kenneth E},
  journal={ACM Transactions on Mathematical Software (TOMS)},
  volume={7},
  number={1},
  pages={17--41},
  year={1981},
  publisher={ACM New York, NY, USA}
}

@article{cartis2019universal,
  title={Universal regularization methods: varying the power, the smoothness and the accuracy},
  author={Cartis, Coralia and Gould, Nick I and Toint, Philippe L},
  journal={SIAM Journal on Optimization},
  volume={29},
  number={1},
  pages={595--615},
  year={2019},
  publisher={SIAM}
}

@article{8,
  title={Second-order optimality and beyond: characterization and evaluation complexity in nonconvex convexly-constrained optimization},
  author={Cartis, C and Gould, NIM and Toint, Ph L},
  journal={Preprint RAL-P-2016-008, Rutherford Appleton Laboratory, Chilton, Oxfordshire, England},
  year={2016}
}

@article{cartis2020sharp,
  title={Sharp worst-case evaluation complexity bounds for arbitrary-order nonconvex optimization with inexpensive constraints},
  author={Cartis, Coralia and Gould, Nicholas IM and Toint, Philippe L},
  journal={SIAM Journal on Optimization},
  volume={30},
  number={1},
  pages={513--541},
  year={2020},
  publisher={SIAM}
}

@article{cartis2020concise,
  title={A concise second-order complexity analysis for unconstrained optimization using high-order regularized models},
  author={Cartis, Coralia and Gould, Nick IM and Toint, Ph L},
  journal={Optimization Methods and Software},
  volume={35},
  number={2},
  pages={243--256},
  year={2020},
  publisher={Taylor \& Francis}
}

@article{nesterov2021implementable,
  title={Implementable tensor methods in unconstrained convex optimization},
  author={Nesterov, Yurii},
  journal={Mathematical Programming},
  volume={186},
  number={1},
  pages={157--183},
  year={2021},
  publisher={Springer}
}

@article{carmon2020lower,
  title={Lower bounds for finding stationary points I},
  author={Carmon, Yair and Duchi, John C and Hinder, Oliver and Sidford, Aaron},
  journal={Mathematical Programming},
  volume={184},
  number={1},
  pages={71--120},
  year={2020},
  publisher={Springer}
}

@article{cai2026globally,
  title={A Globally Convergent Third-Order Newton Method via Unified Semidefinite Programming Subproblems},
  author={Cai, Yubo and Zhu, Wenqi and Cartis, Coralia and Zardini, Gioele},
  journal={arXiv preprint arXiv:2603.09682},
  year={2026}
}

@article{buchheim2021lower,
  title={Lower bounds for cubic optimization over the sphere},
  author={Buchheim, Christoph and Fampa, Marcia and Sarmiento, Orlando},
  journal={Journal of Optimization Theory and Applications},
  volume={188},
  number={3},
  pages={823--846},
  year={2021},
  publisher={Springer}
}

@article{nesterov2020inexact,
  title={Inexact accelerated high-order proximal-point methods},
  author={Nesterov, Yurii},
  journal={Mathematical Programming},
  pages={1--26},
  year={2021},
  publisher={Springer}
}

@article{nesterov2021inexact,
  title={Inexact high-order proximal-point methods with auxiliary search procedure},
  author={Nesterov, Yurii},
  journal={SIAM Journal on Optimization},
  volume={31},
  number={4},
  pages={2807--2828},
  year={2021},
  publisher={SIAM}
}

@article{carmon2021lower,
  title={Lower bounds for finding stationary points II: first-order methods},
  author={Carmon, Yair and Duchi, John C and Hinder, Oliver and Sidford, Aaron},
  journal={Mathematical Programming},
  volume={185},
  number={1},
  pages={315--355},
  year={2021},
  publisher={Springer}
}

@article{birgin2017worst,
  title={Worst-case evaluation complexity for unconstrained nonlinear optimization using high-order regularized models},
  author={Birgin, Ernesto G and Gardenghi, JL and Mart{\'\i}nez, Jos{\'e} Mario and Santos, Sandra Augusta and Toint, Ph L},
  journal={Mathematical Programming},
  volume={163},
  number={1},
  pages={359--368},
  year={2017},
  publisher={Springer}
}

@article{birgin2018fortran,
  title={Fortran routines for testing unconstrained optimization software with derivatives up to third-order},
  author={Birgin, EG and Gardenghi, JL and Mart{\'\i}nez, JM and Santos, SA},
  year={2018}, 
  journal={.},
}

@article{cartis2007adaptive,
  title={Adaptive cubic overestimation methods for unconstrained optimization},
  journal={ora.ox.ac.uk},
  author={Cartis, Coralia and Gould, Nicholas IM and Toint, Philippe L},
  year={2007},
  
}

@article{chen2022efficient,
  title={An efficient alternating minimization method for fourth degree polynomial optimization},
  author={Chen, Haibin and He, Hongjin and Wang, Yiju and Zhou, Guanglu},
  journal={Journal of Global Optimization},
  volume={82},
  number={1},
  pages={83--103},
  year={2022},
  publisher={Springer}
}

@article{zhu2024global,
  title={Global Convergence of High-Order Regularization Methods with Sums-of-Squares Taylor Models},
  author={Zhu, Wenqi and Cartis, Coralia},
  journal={arXiv preprint arXiv:2404.03035},
  year={2024}
}

@misc{nesterov2008private,
  author       = {Nesterov, Yurii},
  title        = {Private communication},
  year         = {2008},
  note         = {Personal communication}
}

@article{jarre2011nesterov,
  title={On Nesterov’s smooth Chebyshev-Rosenbrock function},
  author={Jarre, Florian},
  journal={rN (x)},
  volume={1},
  pages={4},
  year={2011}
}

@article{gurbuzbalaban2012nesterov,
  title={On Nesterov’s nonsmooth Chebyshev--Rosenbrock functions},
  author={G{\"u}rb{\"u}zbalaban, Mert and Overton, Michael L},
  journal={Nonlinear Analysis: Theory, Methods \& Applications},
  volume={75},
  number={3},
  pages={1282--1289},
  year={2012},
  publisher={Elsevier}
}

@article{gould2016note,
    author = {Gould, Nicholas and Scott, Jennifer},
    doi = {10.1145/2950048},
    journal = {ACM Transactions on Mathematical Software (TOMS)},
    number = {2},
    pages = {15},
    publisher = {ACM},
    title = {{A note on performance profiles for benchmarking software}},
    volume = {43},
    year = {2016}
}

@article{dolan2002benchmarking,
    author = {Dolan, Elizabeth D and Mor{\'e}, Jorge J},
    doi = {10.1007/s101070100263},
    journal = {Mathematical programming},
    number = {2},
    pages = {201--213},
    publisher = {Springer},
    title = {{Benchmarking optimization software with performance profiles}},
    volume = {91},
    year = {2002}
}

@article{nesterov2021superfast,
  title={Superfast second-order methods for unconstrained convex optimization},
  author={Nesterov, Yurii},
  journal={Journal of Optimization Theory and Applications},
  volume={191},
  number={1},
  pages={1--30},
  year={2021},
  publisher={Springer}
}

@article{ahmadi2023higher,
  title={Higher-Order Newton Methods with Polynomial Work per Iteration},
  author={Ahmadi, Amir Ali and Chaudhry, Abraar and Zhang, Jeffrey},
  journal={arXiv preprint arXiv:2311.06374},
  year={2023}
}

@article{curtis2023itrace,
  author  = {Curtis, Frank E. and Wang, Qi},
  title   = {Worst-Case Complexity of {TRACE} with Inexact Subproblem
             Solutions for Nonconvex Smooth Optimization},
  journal = {SIAM Journal on Optimization},
  year    = {2023},
  volume  = {33},
  number  = {3},
  pages   = {2191--2221},
  doi     = {10.1137/22M1492428}
}

@article{cartis2021inexactTR,
  title={Strong evaluation complexity of an inexact trust-region algorithm for arbitrary-order unconstrained nonconvex optimization},
  author={Cartis, Coralia and Gould, Nicholas Ian Mark and Toint, Ph L},
  journal={arXiv preprint arXiv:2011.00854},
  year={2020}
}

@article{curtis2017trace,
  author  = {Curtis, Frank E. and Robinson, Daniel P. and Samadi, Mohammadreza},
  title   = {A trust region algorithm with a worst-case iteration complexity of ${O}(\epsilon^{-3/2})$ for nonconvex optimization},
  journal = {Mathematical Programming},
  volume  = {162},
  number  = {1--2},
  pages   = {1--32},
  year    = {2017}
}

\end{document}